\newtheorem{theorem}{Theorem}[section]
\newtheorem{lemma}{Lemma}[section]
\newtheorem{remark}[theorem]{Remark}
\newcommand{\nrm}[1]{\left\| #1 \right\|}
\newcommand{\abs}[1]{\left\lvert #1 \right\rvert}
\newcommand {\scal}[2]{\left(#1,#2\right)}
\DeclareMathOperator{\di}{d\hspace{-1.5pt}}
\newcommand {\dt}{ \di t}
\newcommand{\ds}{\di s}
\newcommand{\dX}{\di \X}
\newcommand {\NN } {{\mathbb N}}
\newcommand {\I}{[0,T]}
\newcommand {\Iopen}{(0,T)}
\newcommand {\domain}{\Omega}
\newcommand {\lp}[1]{\Leb^{#1} (\domain )}
\newcommand {\Lp}[1]{{\bf L}^{#1} (\domain )}
\DeclareMathOperator{\Leb}{L}
\DeclareMathOperator{\Cont}{C}
\DeclareMathOperator{\Hi}{H}
\DeclareMathOperator{\HHi}{\bf H}
\DeclareMathOperator{\Id}{Id}
\DeclareMathOperator{\argmin}{argmin}
\newcommand {\hk}[1]{\Hi^{#1}(\domain )}
\newcommand {\hko}[1]{\Hi^{#1}_0(\domain )}
\newcommand {\Hk}[1]{{\bf H}^{#1}(\domain)}
\newcommand {\Hko}[1]{{\bf H}_0^{#1}(\domain)}
\newcommand {\lpIlp}{\Leb^2\left(\Iopen,\lp{2}\right)}
\newcommand {\lpkIX}[2]{\Leb^{#1}\left(\Iopen,#2\right)}
\newcommand {\cIX}[1]{\Cont\left(\I,#1\right)}
\def \vector#1{\mathbf{#1}}
\newcommand {\X}{\vector{x}}
\newcommand{\weakto}{\rightharpoonup}
\begin{document}

\title[Numerical Algorithms for ISPs in TE]{Numerical Algorithms for the Reconstruction of Space-Dependent Sources in Thermoelasticity}
	
	\author[F.~Maes]{Frederick Maes$^1$}

	\author[K.~Van~Bockstal]{Karel Van Bockstal$^2$} 
	\thanks{The work of K.~Van Bockstal was supported by the Methusalem programme of Ghent University Special Research Fund (BOF) (Grant Number 01M01021)} 
	
	\address[1]{Research Group NaM$^2,$ Department of Electronics and Information Systems\\ Ghent University \\ Krijgslaan 281 \\ B 9000 Ghent \\ Belgium}
	\email{frederick.maes@UGent.be}
	\address[2]{Ghent Analysis \& PDE center, Department of Mathematics: Analysis, Logic and Discrete Mathematics\\ Ghent University\\
		Krijgslaan 281\\ B 9000 Ghent\\ Belgium} 
	\email{karel.vanbockstal@UGent.be}

	\subjclass[2020]{
 35A01, 35A02, 35A15, 35A35, 65M32, 65N21, 35R30
 %35A01, 35A02, 35A15, 35R11, 65M12, 33E12
 }
	\keywords{ISP, thermoelasticity, Landweber method, (conjugate) gradient method, Sobolev gradient}

 \begin{abstract}
      This paper investigates the inverse problems of determining a space-dependent source for thermoelastic systems of type III under adequate time-averaged or final-in-time measurements and conditions on the time-dependent part of the sought source. Several numerical methods are proposed and examined, including a Landweber scheme and minimisation methods for the corresponding cost functionals, which are based on the gradient and conjugate gradient method. A shortcoming of these methods is that the values of the sought source are fixed ab initio and remain fixed during the iterations. The Sobolev gradient method is applied to overcome the possible inaccessibility of the source values at the boundary. Numerical examples are presented to discuss the different approaches and support our findings based on the implementation on the FEniCSx platform.
 \end{abstract}
 \maketitle 
 \tableofcontents

%% main text
%%
\section{Introduction and problem formulation}\label{sec:intro}
The theory of thermoelasticity aims to combine the theories of elasticity and heat conduction by coupling the mechanical and thermal variables of the system as described in the work of Biot \cite{Biot1956}. The material response to thermal phenomena and the impact of deformation on the thermal properties was characterised by Green and Naghdi \cite{Green1991} and was labelled as type-{I}, type-{II} and type-{III} (generalised) thermoelasticity, see also \cite{chandrasekharaiah1986, Chandrasekharaiah1998,GurtinPipkin1968, Povstenko2014} for an in-depth discussion of related models.  Noninvasive measurements for the thermoelastic model are discussed in \cite{Krouskop1987-pb}. This model has applications ranging from the biomedical problem of fitting the socket of a prosthetic limb \cite{Mak2001-iu, Tonuk2003-ov}, thermoforming and moulding processes of polymer sheets \cite{Alphonse2022, Warby2003}, and solidification of metals on a mould \cite{Demir2017}.

Over the last decades, there has been a surge in the literature on inverse problems (IPs). For those related to thermoelasticity, see, for example  \cite{Bellassoued2011, Dhaeyer2014, Hu2019, Ikehata2018, Karageorghis2014,Kozlov2009, Lesnic2016,fmkvb2022, Marin2016, Tanaka2005, VanBockstal2017b,VanBockstal2022, VanBockstal2014, Wu2012, Yu1999} and references therein for further discussions and valuable works about IPs. 
In relation to the ISPs in this work, we refer to \cite{Bellassoued2011,fmkvb2022,VanBockstal2014,Cao2019,VanBockstal2017, Wu2013} and the research summaries \cite{ MaesVanBockstalMPiPA,VanBockstalMWCAPDE2023}. The inverse problem of determining unknown positive thermal conductivity components based on transient temperature measurements at fixed locations in a two-dimensional setting was studied in \cite{Cao2019}. The analysis was performed by minimising the associated objective functional via the conjugate gradient method, for which the sensitivity and adjoint problems were written down. The obtained $\Leb^2$-gradient was regularised by solving an elliptic boundary-value problem to obtain the Sobolev gradient, see also \cite{Cao2018} for the IP of estimating the perfusion coefficient.  The Sobolev gradient method \cite{Neuberger2010}, was already used in \cite{JinZou2010} to estimate a Robin coefficient on an inaccessible part of the boundary from Cauchy data on the accessible part. In \cite{Alosaimi2024} a nonlinear minimization problem for the reconstruction of a time-dependent perfusion coefficient is investigated. The analysis in our work is closely related to these articles. Let us mention that \cite{NovruziProtas2018,Novruzi2023} considered unconstrained optimisation problems in which the authors provided and analysed modified gradient descent algorithms.  
The conjugate gradient method was applied in \cite{Hasanov2013b} to identify a time-dependent heat source from overdetermined boundary data, and in \cite{Hasanov2014} for a space-dependent source based on final-in-time and time-averaged measurements, both for a variable coefficient heat equation; see also \cite{Hasanov2011}. The identification of unknown space-dependent sources and unknown time-dependent sources for the inverse problem associated with the advection-diffusion equation with a space-dependent diffusion coefficient was studied in \cite{Sebu2017} using Dirichlet boundary data as an available measurement. The uniqueness analysis for source identification in a nonlinear parabolic equation and for the wave equation with nonlinear damping (both with time- and space-depending coefficients) from the final data measurement is presented in \cite{Slodicka2014}.

\subsection{Thermoelastic model} \label{subsec:TEmodel}
We consider an isotropic homogeneous thermoelastic body which occupies an open and bounded domain $\Omega \subset \mathbb{R}^d,$ where $d\in\mathbb{N}.$ The boundary $\Gamma =\partial\Omega$ is assumed to be Lipschitz continuous. Let $T>0$ be a given final time and set $Q_T = \Omega \times (0,T]$ and $\Sigma_T = \Gamma \times (0,T]$. The interaction between the elastic and thermal behaviour of the material is expressed by the functions
$$
\bm{u} \colon Q_T \to \mathbb{R}^d \quad \text{ and } \quad \theta \colon Q_T\to \mathbb{R},
$$
which denote the displacement vector and the temperature difference $\theta$ from the reference value $T_0>0$, respectively, both taken with reference to the unstressed and undeformed state of the material. 
The thermal conductivity, the mass density and the specific heat of the material are denoted by the positive constants $\kappa, \rho$ and $C_s$, respectively. The positive quantities $\lambda$ and $\mu$ denote the Lam\'{e} coefficients, which can be expressed in terms of the shear modulus $\overline{G}>0$ and the Poisson's ratio $\nu \in (0,1/2)$ of the system as $\lambda = 2(1-2\nu)^{-1}\nu \overline{G}$ and $\mu = \overline{G}.$ The strain and stress tensors are given by
\begin{equation*}
\label{eq:strain-stress}
\epsilon(\bm{u}) = \frac{1}{2}\left(\nabla \bm{u} + (\nabla \bm{u} )^\top\right), \quad \sigma(\bm{u}) =2 \mu \epsilon(\bm{u}) + \lambda \text{Tr}(\epsilon(\bm{u}))\Id,
\end{equation*} 
where $\Id$ is the identity tensor. Notice that $\text{Tr}(\epsilon(\bm{u})) = \nabla \cdot \bm{u}$ and 
\begin{equation*}
    \label{eq:divsigmau}
    \nabla \cdot \sigma(\bm{u}) = \mu \Delta \bm{u} + (\lambda + \mu) \nabla (\nabla\cdot \bm{u}).
\end{equation*}
The coupling, which relates the deformation in shape to the heating and cooling of the material, is expressed by the coupling parameter
$$
\gamma = \alpha_T (3\lambda + 2\mu) = 2\alpha_T \overline{G} \frac{1+\nu}{1-2\nu} >0,
$$
where $\alpha_T>0$ is the coefficient of linear thermal expansion of the material.

The evolution of the displacement $\bm{u},$ the temperature profile $\theta$ and their mutual interaction is governed by the isotropic thermoelastic system of type-{III} (see, e.g. \cite{MunozRivera2002})
\begin{equation} \label{eq:problem}
		%	\begin{footnotesize}
			\left\{
			\begin{array}{rll}
				%\rho \partial_{tt} \bm{u} - \mu \Delta \bm{u}  - (\lambda + \mu) \nabla (\nabla \cdot \bm{u}) + \gamma \nabla \theta &= \bm{p} & \quad \text{ in } Q_T \\
                \rho \partial_{tt} \bm{u} - \nabla \cdot \sigma(\bm{u}) + \gamma \nabla \theta &= \bm{p} & \quad \text{ in } Q_T \\ 
				\rho C_s \partial_t \theta - \kappa \Delta \theta - (k\ast \Delta \theta) + T_0 \gamma \nabla \cdot \partial_t \bm{u} &= h & \quad \text{ in } Q_T \\
				\bm{u}(\X, t) = \bm{0}, \quad \theta(\X,t) &= 0 &\quad \text{ in } \Sigma_T \\
				\bm{u}(\X,0) = \overline{\bm{u}}_0(\X), \quad \partial_t \bm{u}(\X,0)=  \overline{\bm{u}}_1(\X), \quad \theta(\X,0) &= \overline{\theta}_0(\X) & \quad \text{ in }  \Omega. 
			\end{array}
			\right.
			%	\end{footnotesize}
	\end{equation}
The source functions $\bm{p}(\X,t)$ and $h(\X,t)$ on the right-hand side of \eqref{eq:problem}, where $(\X,t) \in Q_T,$ stand for the applied load and heat source respectively, applied to the material at the point $\X\in\Omega$ at time $t.$ The symbol $\ast$ denotes the Laplace convolution of the kernel function $k$ and a function $z$ on $Q_T$ via
$$
(k\ast z)(\X, t)  = \int_0^t k(t-s) z(\X,s) \di s, \quad (\X,t) \in Q_T.
$$
The convolution term $k \ast \Delta \theta$ in the model allows for memory effects, as it ensures a finite speed of heat propagation, as opposed to systems of type-I where $\kappa \neq 0$ and $k=0$. The system \eqref{eq:problem} is referred to as type-{II} if $\kappa =0$ and $k \neq 0,$ whereas $\kappa, k \neq0$ hold in systems of type-{III}.  The kernel function $k \in \Cont^2([0,T])$ is assumed to be strongly positive definite (see e.g.\ \cite{Nohel1976}) and decays to zero for $t\to\infty$, i.e.\
\begin{equation} 
\label{eq:kernelcondition}
    \lim\limits_{t\to\infty}k(t) = 0, \qquad 
    (-1)^j k^{(j)}(t) \geq 0, \quad k'(t) \not\equiv 0, \text{ for all } t \in [0,T],\,  j=0,1,2.
\end{equation} 
Typically, one considers  $k(t) = a\exp(-bt)$ with $ a,b>0,$ see e.g. \cite{FatoriLuedersMunozRivera2003,WangGuo2006}.

\subsection{Inverse source problems} \label{subsec:isps}
The following inverse source problems will be considered in this work. In all of them, the spatially dependent part of the source is to be recovered from a given measurement.
\begin{description}
\item[ISP1] Let 
\begin{equation}
    \label{eq:decomP}
    \bm{p}(\X,t) = g(t) \bm{f}(\X) + \bm{r}(\X,t), \quad (\X,t) \in Q_T,
\end{equation}
where the functions $g \colon [0,T]\to \mathbb{R}$ and $\bm{r} \colon Q_T \to \mathbb{R}^d$ are given. Find the function $\bm{f}\colon \Omega \to \mathbb{R}^d$ from either the final-in-time measurement
\begin{equation}
    \label{eq:measurementfinaltimeU}
		\bm{\xi}_T(\X)=\bm{u}(\X,T), \quad \X\in\Omega,
\end{equation} 
for \textbf{ISP1.1}, or the time-averaged measurement
\begin{equation}\label{eq:measurementtimeaverageU}
	\bm{\chi}_T(\X)=\int_0^T \bm{u}(\X,t) \dt , \quad \X \in \Omega,
\end{equation}
for \textbf{ISP1.2}.

\item[ISP2] Let 
\begin{equation}
    \label{eq:decompH}
    h(\X,t) = g(t) f(\X) + s(\X,t), \quad (\X,t) \in Q_T,
\end{equation}
where the functions $g \colon [0,T] \to \mathbb{R}$ and $s\colon Q_T \to \mathbb{R}$ are given. Find the function $f\colon \Omega \to \mathbb{R}$ from the time-averaged measurement
\begin{equation}
    \label{eq:measurementtimeaveragetheta}
	    \psi_T(\X)=\int_0^T \theta(\X,t) \dt, \quad \X \in \Omega. 
	\end{equation} 
\end{description}
This paper investigates the numerical reconstruction of space-dependent parts of $\bm{p}$ and $h$ under final time and/or time-average measurements taken from $\bm{u}(\X,t)$ and/or $\theta(\X,t).$ The identifiability of the problems considered here was addressed in \cite{fmkvb2022} and is briefly recalled in \Cref{sec:prelim}.
Compared to the work done in \cite{VanBockstal2017b,VanBockstal2014} for \textbf{ISP1.1}, no additional damping term $\bm{g}(\partial_t \bm{u})$ is considered in the hyperbolic equation. The limitation mentioned in \cite{VanBockstal2017b} concerning the reconstruction of non-symmetric sources will be resolved, see \Cref{rem:isp11nonsymetric}. Moreover, our results will also be based on the minimisation of the cost functional combined with the Sobolev gradient method. For conciseness, we have collected the results for \textbf{ISP1.1}, \textbf{ISP1.2} and \textbf{ISP2} into a single article since the numerical schemes to retrieve the missing sources are similar and so only the adaptations need to be explained. 

\subsection{Structure of the article}\label{subsec:structure}
The organisation of this article is as follows. We have formulated the inverse problems and the available measurements in \Cref{subsec:isps}. Some relevant results regarding the weak formulation, the well-posedness of the direct problem, the time discretisation, and the uniqueness of a solution to the ISPs are collected in \Cref{sec:prelim}, where notations will be fixed as well. The iterative scheme based on the Landweber method is described and discussed in \Cref{sec:landweber}, whilst in \Cref{sec:gradientmethods} several gradient methods are investigated. The proposed procedures to reconstruct the unknown source function are tested on a numerical example in \Cref{sec:numerical}.

%%%%%%%%%%%%%%%%%%%%%%%%%%%%%%%%%%%%%%%%%%%%%%%%%%%%%%%%%%%%%%%%%%%%%%%%%%%%%%%%%%%%%%%%%%%%%%%%%%%%%%%%%%%%%%
%%%%%%%%%%%%%%%%%%%%%%%%%%%%%%%%%%%%%%%%%%%%%%%%%%%%%%%%%%%%%%%%%%%%%%%%%%%%%%%%%%%%%%%%%%%%%%%%%%%%%%%%%%%%%%

\section{Preliminaries}\label{sec:prelim}
\subsection{Notations} \label{subsec:notations}
The classical $\Leb^2$-inner product for scalar functions is denoted by $\scal{\cdot}{\cdot}$ and the corresponding norm by $\nrm{\cdot}.$ Let $X$ be a Banach space with norm $\nrm{\cdot}_X$. For $p\geq 1,$ and $p \neq \infty,$ the space $\lpkIX{p}{X}$ is the space of all measurable functions $u \colon (0,T) \to X$ such that
$$
\nrm{u}^p_{\lpkIX{p}{X}} := \int_0^T \nrm{u(t)}_X^p\dt < \infty.
$$
The space $\cIX{X}$ consists of all continuous functions $u \colon [0,T] \to X$ such that 
$$
\nrm{u}_{\cIX{X}} := \max\limits_{t\in[0,T]}\nrm{u(t)}_X < \infty. 
$$
The space $\Hi^k\left((0,T),X\right)$ consists of all functions $u \colon (0,T) \to X$ such that the weak derivative of $u$ with respect to $t$ up to order $k$ exists and
$$
\nrm{u}^2_{\Hi^k\left((0,T),X\right)}:= \int_0^T \left( \sum_{i=0}^k  \nrm{u^{(i)}(t)}^2_X\right) \dt < \infty. 
$$
Similar notations with bold letters will be used for vector-valued functions. We set $\Lp{2} = (\lp{2})^d$ with  inner product given by $\scal{\bm{u}}{\bm{v}} = \sum_{j=1}^d \scal{{u}_j}{{v}_j}$ where $\bm{u} = ({u}_1, \dots, {u}_d) \in \Lp{2}$ and $\bm{v} = ({v}_1,\dots,{v}_d) \in \Lp{2}.$ The Sobolev space $\Hk{1}$ is the space of vector fields $\bm{u} \in \Lp{2}$ with $\nabla \bm{u} \in (\lp{2})^{d\times d}.$ The following norm is used
$$
\nrm{\bm{u}}_{\Hk{1}}^2 = \nrm{\bm{u}}^2_{\Lp{2}} + \nrm{\nabla\bm{u}}^2_{(\lp{2})^{d\times d}},
$$
with
$$
 \nrm{\bm{u}}^2_{\Lp{2}} = \sum_{i=1}^d \nrm{{u}_j}^2 \quad \text{ and }  \quad
 \nrm{\nabla \bm{u}}^2_{(\lp{2})^{d\times d}} 
 = \sum_{i,j=1}^d \nrm{\partial_{x_i}{u}_j}^2.
 $$
Furthermore, we will also denote the norm in $\Lp{2}$ and $(\lp{2})^{d\times d}$ by $\nrm{\cdot},$ where the meaning will be clear from the context.

\subsection{Weak formulation} \label{subsec:weakformulation} 
Since we consider Dirichlet boundary conditions for both $\bm{u}$ and $\theta$ in \eqref{eq:problem}, the variational formulation of the system of equations reads as follows: 
Find  $(\bm{u}(t), \theta(t)) \in \Hko{1} \times \hko{1}$ such that for all $\bm{\varphi} \in \Hko{1}$ and $\psi \in \hko{1}$ and a.e. in $t \in (0,T)$ it holds
\begin{multline}
    \label{eq:weaku}
    \rho \scal{\partial_{tt} \bm{u}(t)}{\bm{\varphi}} + \mu \scal{\nabla \bm{u}(t)}{\nabla \bm{\varphi}} + (\lambda + \mu) \scal{(\nabla \cdot \bm{u})(t)}{\nabla \cdot \bm{\varphi}}\\ \ + \gamma \scal{\nabla \theta(t)}{\bm{\varphi}} = \scal{\bm{p}(t)}{\bm{\varphi}},
\end{multline}
\begin{multline}
    \label{eq:weaktheta}
    \rho C_s \scal{\partial_t \theta(t)}{\psi} + \kappa \scal{\nabla \theta(t)}{\nabla \psi} + \scal{(k\ast \nabla \theta)(t)}{\nabla \psi} \\- T_0\gamma \scal{\partial_t \bm{u}(t)}{\nabla\psi}= \scal{h(t)}{\psi}.
\end{multline}
% \begin{align} 
% &\rho \scal{\partial_{tt} \bm{u}(t)}{\bm{\varphi}} + \mu \scal{\nabla \bm{u}(t)}{\nabla \bm{\varphi}} + (\lambda + \mu) \scal{(\nabla \cdot \bm{u})(t)}{\nabla \cdot \bm{\varphi}}\nonumber\\ \qquad \qquad   & + \gamma \scal{\nabla \theta(t)}{\bm{\varphi}} = \scal{\bm{p}(t)}{\bm{\varphi}},
% \label{eq:weaku} \\
% \rho C_s \scal{\partial_t \theta(t)}{\psi} + \kappa \scal{\nabla \theta(t)}{\nabla \psi} + \scal{(k\ast \nabla \theta)(t)}{\nabla \psi} \nonumber\\\qquad \qquad &- T_0\gamma \scal{\partial_t \bm{u}(t)}{\nabla\psi}= \scal{h(t)}{\psi}.
% \label{eq:weaktheta}
% \end{align} 
A pair of functions $(\bm{u}, \theta)$ satisfying the weak formulation \eqref{eq:weaku}-\eqref{eq:weaktheta} will be called a weak solution to the problem \eqref{eq:problem}.

The following technical well-posedness result of \eqref{eq:weaku}-\eqref{eq:weaktheta} was reported in \cite[Theorem~4.1]{VanBockstal2017b}.

\begin{lemma}[Well-posedness forward problem \cite{VanBockstal2017b}]~\label{lem:forward_problem}
Let $k \in \Cont^2([0,T])$ fulfil the conditions in \eqref{eq:kernelcondition}.
\begin{enumerate}[label=(\roman*),wide,labelindent=0pt]
\item Assume that  $\bm{p} \in \lpkIX{2}{\Lp{2}}$ and $h \in \lpkIX{2}{\lp{2}}$ and that the initial data satisfy $\overline{\bm{u}}_0 \in \Hko{1}, \, \overline{\bm{u}}_1 \in \Lp{2}$ and $\overline{\theta}_0\in\lp{2}.$ Then, the problem \eqref{eq:problem} has a unique weak solution $(\bm{u},\theta)$ obeying
    \begin{align*}
        \bm{u} &\in \Cont\left([0,T], \Lp{2}\right) \cap \lpkIX{2}{\Hko{1}}, \quad   \partial_t \bm{u} \in \lpkIX{2}{\Lp{2}}, \\
        \partial_{tt} \bm{u} &\in \lpkIX{2}{\Hko{1}^\ast}, \\
        \theta &\in \Cont\left([0,T], \lp{2}\right) \cap \lpkIX{2}{\hko{1}}, \quad \partial_t \theta \in \lpkIX{2}{\hko{1}^\ast}.
    \end{align*}
    The following a priori estimate holds (for a positive constant $C(T)>0$)
    \begin{multline}
\label{eq:estimate under (i)}
\max\limits_{t\in [0,T]} \left\{ \nrm{\bm{u}(t)}^2 + \nrm{\int_0^t \bm{u}(s)\di s}^2_{\Hko{1}} + \nrm{\int_0^t \theta(s)\ds}^2_{\hko{1}} \right\}  \\
\leq C \left(\nrm{\overline{\bm{u}}_0}^2 +  \nrm{\bm{p}}^2_{\lpkIX{2}{\Lp{2}}} + \nrm{h}^2_{\lpIlp}\right).
\end{multline} 

    \item Assume that  $\bm{p} \in \HHi^1\left((0,T), \Lp{2}\right)$ and $h \in \Hi^1\left((0,T), \lp{2}\right)$ and that the initial data satisfy $\overline{\bm{u}}_0 \in \HHi^2(\Omega) \cap \Hko{1},\, \overline{\bm{u}}_1\in\Hko{1} $ and $\overline{\theta}_0 \in \hko{1}.$ Let the PDEs in \eqref{eq:problem} be fulfilled for $t=0.$ Then, the problem \eqref{eq:problem} has a unique weak solution $(\bm{u}, \theta)$ obeying
    \begin{align*}
        \bm{u} & \in \Cont\left([0,T],\Hko{1}\right), \quad  \partial_t \bm{u} \in \Cont\left([0,T], \Lp{2}\right) \cap \lpkIX{2}{\Hko{1}}, \qquad \qquad\\
        \partial_{tt} \bm{u} & \in \lpkIX{2}{\lp{2}}, \\
        \theta & \in \Cont\left([0,T], \hko{1} \right), \quad \partial_t \theta \in \lpkIX{2}{\hko{1}}.
    \end{align*}
    The following a priori estimate holds (for a positive constant $C(T)>0$)
    \begin{multline}
    \label{eq:estimate under (ii)}
\max\limits_{t \in[0,T]} \left\{\nrm{\partial_t \bm{u}(t)}^2 + \nrm{\bm{u}(t)}^2_{\Hko{1}} + \nrm{\theta(t)}^2  \right\} + \int_0^T \nrm{\nabla \theta(t)}^2\dt \\+ \int_0^T \nrm{(k\ast \nabla \theta)(t)}^2\dt  \leq C\left(\nrm{\bm{p}}^2_{\lpkIX{2}{\Lp{2}}} + \nrm{h}^2_{\lpkIX{2}{\lp{2}}}\right. \\+ \left.  \nrm{\overline{\bm{u}}_0}^2_{\Hko{1}} + \nrm{\overline{\bm{u}}_1}^2 + \nrm{\overline{\theta}_0}^2   \right). 
\end{multline}
\end{enumerate}
\end{lemma}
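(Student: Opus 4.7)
The plan is to combine a Faedo--Galerkin approximation scheme (or equivalently a Rothe time-discretisation, which will be needed in \Cref{sec:prelim} anyway) with energy estimates that exploit the antisymmetry of the thermomechanical coupling. Concretely, I choose an orthonormal basis of $\Lp{2}$ made of eigenfunctions of the Lam\'e operator in $\Hko{1}$, and a basis of $\lp{2}$ made of eigenfunctions of $-\Delta$ with Dirichlet conditions, and project \eqref{eq:weaku}--\eqref{eq:weaktheta} onto the associated finite dimensional subspaces, thereby obtaining a linear ODE system in time with the initial data projected accordingly. Once uniform (in the discretisation parameter) a priori estimates are established, the limit passage to the continuous problem proceeds via weak/weak$^\ast$ compactness and the standard Aubin--Lions lemma for the strong compactness needed to pass to the limit in the (linear) memory term.

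For the a priori estimate under the low-regularity assumptions in (i), the term $\partial_{tt}\bm{u}$ is not well defined in $\Lp{2}$, so the natural trick is to integrate the displacement equation in time first: applying $\int_0^t (\cdot)\ds$ to \eqref{eq:weaku} produces $\rho\scal{\pdt\bm{u}(t)}{\bm{\varphi}} + \mu\scal{\nabla\!\int_0^t \bm{u}}{\nabla\bm{\varphi}} + (\lambda+\mu)\scal{\nabla\cdot\!\int_0^t\bm{u}}{\nabla\cdot\bm{\varphi}} + \gamma\scal{\nabla\!\int_0^t\theta}{\bm{\varphi}} = \text{(data)}$, up to lower order terms from $\overline{\bm{u}}_1$. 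Testing this with $\bm{u}(t)$ and testing \eqref{eq:weaktheta} (also integrated in time) with $\theta(t)$, then multiplying the first by $T_0$ and adding, causes the coupling contributions $T_0\gamma\scal{\nabla\!\int_0^t\theta}{\bm{u}}$ and $-T_0\gamma\scal{\bm{u}}{\nabla\theta}$ to telescope after one more integration in time. The memory contribution is handled via the strong positive definiteness of $k$ in \eqref{eq:kernelcondition}, which yields a nonnegative quadratic form $\int_0^t (k\ast\nabla\Theta)\cdot\nabla\Theta$ with $\Theta(s)=\int_0^s\theta$, and Gr\"onwall then delivers \eqref{eq:estimate under (i)}.

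For the regular case (ii), I test \eqref{eq:weaku} directly with $\pdt\bm{u}$ and \eqref{eq:weaktheta} with $\theta$. The coupling terms $\gamma\scal{\nabla\theta}{\pdt\bm{u}}$ and $-T_0\gamma\scal{\pdt\bm{u}}{\nabla\theta}$ now cancel exactly after multiplying the first identity by $T_0$, producing the energy identity
\begin{equation*}
\tfrac{T_0}{2}\tfrac{d}{dt}\!\left[\rho\nrm{\pdt\bm{u}}^2+\mu\nrm{\nabla\bm{u}}^2+(\lambda+\mu)\nrm{\nabla\!\cdot\!\bm{u}}^2\right]+\tfrac{\rho C_s}{2}\tfrac{d}{dt}\nrm{\theta}^2+\kappa\nrm{\nabla\theta}^2+\scal{k\ast\nabla\theta}{\nabla\theta}=\text{(data)}.
\end{equation*}
Integrating in time, using the strong positive definiteness of $k$ to get $\int_0^T\scal{k\ast\nabla\theta(t)}{\nabla\theta(t)}\dt\geq c\int_0^T\nrm{(k\ast\nabla\theta)(t)}^2\dt$ (after absorbing the pointwise term via the $\kappa$-contribution when necessary), together with Young's inequality on the right hand side and Gr\"onwall, yields \eqref{eq:estimate under (ii)}. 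The compatibility assumption that the PDEs hold at $t=0$ allows to differentiate \eqref{eq:weaku}--\eqref{eq:weaktheta} in time in order to identify $\pdtt\bm{u}\in\lpkIX{2}{\lp{2}}$ and $\pdt\theta\in\lpkIX{2}{\hko{1}}$, as announced.

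Uniqueness in both cases is immediate by linearity: the difference of two solutions satisfies the system with zero data and zero initial conditions, so the same energy identity forces it to vanish. The main technical obstacle I anticipate is twofold: first, arranging the cancellation of the coupling in (i), which forces the unusual choice of testing with the time-antiderivative of $\bm{u}$ rather than with $\pdt\bm{u}$; second, extracting a useful bound on $(k\ast\nabla\theta)$ from the positive-definiteness assumption, which typically requires a Paley--Wiener / frequency-domain argument (or the equivalent time-domain lemma of Staffans--Nohel) rather than an elementary manipulation, and which is where the hypotheses \eqref{eq:kernelcondition} are essential.
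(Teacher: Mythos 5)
You should first note that the paper does not prove \Cref{lem:forward_problem} at all: it is imported verbatim from \cite[Theorem~4.1]{VanBockstal2017b}, so there is no internal proof to compare against. Judged on its own merits, your plan is sound and is essentially the standard route by which this result is established in the cited reference: a discretisation (Rothe's backward-Euler scheme there, Faedo--Galerkin in your variant --- the two are interchangeable here since the problem is linear), energy estimates that exploit the exact cancellation of the coupling terms after weighting the momentum balance by $T_0$, the Nohel--Shea-type lemma $\int_0^t (k\ast v)\,v \geq \delta \int_0^t \abs{k\ast v}^2$ for strongly positive definite kernels to absorb the memory term (this is precisely where \eqref{eq:kernelcondition} enters and what produces the $\int_0^T\nrm{(k\ast\nabla\theta)(t)}^2\dt$ term on the left of \eqref{eq:estimate under (ii)}), and time differentiation of the system under the compatibility condition at $t=0$ to obtain the higher regularity in part (ii). The one place where your sketch is underspecified is the low-regularity estimate (i): testing the once-integrated momentum equation with $\bm{u}(t)$ and the once-integrated heat equation with $\theta(t)$ does \emph{not} make the coupling terms cancel pointwise in time (one gets $T_0\gamma\scal{\nabla\int_0^t\theta}{\bm{u}(t)}$ against $-T_0\gamma\scal{\bm{u}(t)}{\nabla\theta(t)}$), and the additional integration by parts in time you invoke produces boundary terms involving $\scal{\bm{u}(t)}{\nabla\int_0^t\theta}$ that must be absorbed by Young's inequality and Gr\"onwall before the claimed form \eqref{eq:estimate under (i)} emerges; this is routine but should be carried out explicitly rather than described as a telescoping. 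Your treatment of uniqueness and of part (ii) is correct as stated.
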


\subsection{Time discretisation} \label{subsec:timediscretisation}
Let $n_t\in\mathbb{N}$ and divide the time interval $[0,T]$ into $n_t$ subintervals, $[t_{i-1}, t_i], $ with $i=1,\dots, n_t.$ We will consider an equidistant time partition by setting $\abs{t_i - t_{i-1}} = \tau$ for all $i=1,\ldots,n_t,$ so $\tau = \frac{T}{n_t}.$ For a function $z(\X,t),$ we denote its approximation at $t=t_i$ by $z_i(\X).$ Moreover, the backward Euler differences are used to discretise the time derivatives
\begin{equation}
    \label{eq:backwardeuler}
    \begin{array}{rl}
    \partial_t z(\X,t_i) &\approx \delta z_i = \frac{1}{\tau} \left(z_i - z_{i-1}\right), \\ \partial_{tt} z(\X,t_i) &\approx \delta^2 z_i = \frac{1}{\tau} \left( \frac{z_i - z_{i-1}}{\tau} - \delta z_{i-1}\right) .
    \end{array} 
\end{equation} 
Henceforth, we use $\bm{u}_{0} = \overline{\bm{u}}_0, \delta \bm{u}_0 = \overline{\bm{u}}_1,$ and $\theta_0 = \overline{\theta}_0$ according to the initial conditions in \eqref{eq:problem}.  
The discretised weak formulation at $t=t_i$ now reads as follows: For $i=1,\dots,n_t,$ find $(\bm{u}_i, \theta_i) \in \Hko{1} \times \hko{1}$ such that
\begin{multline}
     \rho \scal{\bm{u}_i}{\bm{\varphi}} + \tau^2 \scal{\nabla \bm{u}_i}{\nabla \bm{\varphi}} + \tau^2 (\lambda + \mu) \scal{\nabla \cdot \bm{u}_i}{\nabla \cdot \bm{\varphi}} + \tau^2\gamma \scal{\nabla \theta_i}{\bm{\varphi}} \\\qquad = \tau^2 \scal{\bm{p}_i}{\bm{\varphi}} + \rho \scal{\bm{u}_{i-1} + \tau \delta \bm{u}_{i-1}}{\bm{\varphi}}\label{eq:weakudisc}
\end{multline} 
and
\begin{multline} 
    \rho C_s \scal{\theta_i}{\psi} + \tau \kappa \scal{\nabla \theta}{\nabla \psi} + \tau \scal{\left(k\ast \nabla \theta\right)_i}{\nabla \psi} - T_0 \gamma \scal{\bm{u}_i}{\nabla \psi} \\ \qquad = \tau \scal{h_i}{\psi} + \rho C_s \scal{\theta_{i-1}}{\psi} - T_0 \gamma \scal{\bm{u}_{i-1}}{\nabla \psi}
    \label{eq:weakthetadisc}
\end{multline} 
for all $\bm{\varphi} \in \Hko{1}$ and $\psi \in \hko{1}.$ 

For $k\in \Cont([0,T]),$ the convolution term associated with the operator 
\[
K : \Leb^2(0,T)\rightarrow \Leb^2(0,T) : z \mapsto  k \ast z 
\]
is discretised in the following way. We set $(k\ast z)_0  = (k\ast z)(0)  = 0$ and 
$$
(k\ast z)_i = \sum_{j=1}^i k(t_i -t_j) z_j \tau \approx (k\ast z)(t_i) = \int_0^{t_i} k(t_i - s) z(\X,s)\ds
$$
for $i=1,\dots, n_t.$
The (adjoint) operator $K^*:\Leb^2 (0,T) \rightarrow \Leb^2 (0,T)$ defined by
\begin{equation} 
\label{eq:Kast}
K^\ast(z)(s) := \int_s^T k(t-s)z(t)\,\mathrm{d}t, \quad s \in [0,T],
\end{equation} 
is discretised analogously. We set $K^\ast(z)(T) = (k\circledast z)_{n_t} = 0$ and 
$$
(k\circledast z)_i = \sum_{j=i}^{n_t-1} k(t_j - t_i) z_j \tau \approx K^\ast(z)(t_i) = \int_{t_i}^T k(t-t_i) z(\X,t)\,\mathrm{d}t
$$
for $i = n_t- 1, \dots, 1, 0. $

\subsection{Uniqueness results for the ISPs}\label{subsec:uniqueness}
We will consider the spaces
\begin{equation} 
\label{eq:classesforg}
\begin{array}{rl}
    \mathcal{C}_{0} &:= \left\{ g \in \Cont([0,T]) \mid 0 \notin g([0,T])\right\}, \\
    \mathcal{C}_{1} &:= \left\{ g \in \Cont^1([0,T]) \mid \partial_t(g(t)^2)>0,  \, \forall t \in [0,T] \right\},\\
    \mathcal{C}_{1,\ast} &:= \left\{g \in \Cont^1([0,T]) \cap \mathcal{C}_{0} \mid \partial_t(g(t)^2) \geq  0, \, \forall t \in [0,T]\right\}.
\end{array}
\end{equation}
The uniqueness results corresponding to the inverse problems are recorded below. Their main distinction lies in which set of assumptions in \Cref{lem:forward_problem} are used, and to which class of \eqref{eq:classesforg} the known function $g$ belongs. The following theorems collect the results from \cite[Theorems~2.2,~2.7,~3.1]{fmkvb2022} dealing respectively with \textbf{ISP1.1}, \textbf{ISP1.2} and \textbf{ISP2}, see also \cite{MaesVanBockstalMPiPA,VanBockstalMWCAPDE2023} for related results.
\begin{theorem}[Uniqueness \textbf{ISP1.1}]\label{thm:isp11}
    Let the conditions of \Cref{lem:forward_problem}(ii) be fulfilled. Assume that $g \in \mathcal{C}_{1}$. Then, given  $\bm{\xi}_T \in \Lp{2},$ there exists at most one triple
    \[
    (\bm{u}, \theta, \bm{f}) \in \Cont\left([0,T], \Hko{1}\right) \times \Cont\left([0,T], \hko{1}\right) \times \Lp{2},
    \]
    that solves the problem \eqref{eq:problem} and for which the final-in-time measurement \eqref{eq:measurementfinaltimeU} holds.
\end{theorem}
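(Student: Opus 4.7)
The plan is to argue by contradiction. Assume $(\bm{u}_j, \theta_j, \bm{f}_j)$, $j = 1, 2$, are two triples satisfying the hypotheses of the theorem. Set $\tilde{\bm{u}} = \bm{u}_1 - \bm{u}_2$, $\tilde{\theta} = \theta_1 - \theta_2$, $\tilde{\bm{f}} = \bm{f}_1 - \bm{f}_2$; by linearity of \eqref{eq:weaku}--\eqref{eq:weaktheta}, the pair $(\tilde{\bm{u}}, \tilde{\theta})$ solves the homogeneous thermoelastic system with source $(g(t)\tilde{\bm{f}}(\X), 0)$, vanishing initial data, homogeneous Dirichlet boundary conditions, and the additional over-determination $\tilde{\bm{u}}(\cdot, T) = \bm{0}$ in $\Lp{2}$ coming from \eqref{eq:measurementfinaltimeU}. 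The task reduces to proving $\tilde{\bm{f}} \equiv \bm{0}$.

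Next, I would derive the classical thermoelastic energy estimate by testing the mechanical equation with $\partial_t\tilde{\bm{u}}$ and the thermal equation with $T_0\tilde{\theta}$, and summing. The coupling terms $\gamma(\nabla\tilde{\theta}, \partial_t\tilde{\bm{u}})$ and $-T_0\gamma(\partial_t\tilde{\bm{u}}, \nabla\tilde{\theta})$ cancel by construction, while the memory contribution $(k\ast\nabla\tilde{\theta}, \nabla\tilde{\theta})$ has nonnegative time integral thanks to the strong positive definiteness in \eqref{eq:kernelcondition}. A Gronwall argument then yields
\[
\|\partial_t\tilde{\bm{u}}(t)\|^2 + \|\tilde{\bm{u}}(t)\|^2_{\Hko{1}} + \|\tilde{\theta}(t)\|^2 \leq C \|\tilde{\bm{f}}\|^2 , \qquad t \in [0,T].
\]
To convert the over-determination $\tilde{\bm{u}}(T) = \bm{0}$ into information about $\tilde{\bm{f}}$, I would introduce the auxiliary function $\bm{w}(\X, t) := \int_t^T \tilde{\bm{u}}(\X, s)\,ds$, which vanishes at $t = T$ and satisfies $\partial_t\bm{w} = -\tilde{\bm{u}}$. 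Testing the mechanical equation with $\bm{w}$ (or with a $g$-weighted variant) and integrating by parts twice in time causes the hyperbolic term to telescope thanks to the double vanishing $\tilde{\bm{u}}(0) = \tilde{\bm{u}}(T) = \bm{0}$. The condition $\partial_t(g(t)^2) > 0$---which in particular forces $|g(t)| \geq |g(0)| > 0$ on $[0,T]$---then organises the source contribution into a strictly positive quadratic form in $\tilde{\bm{f}}$; combining with the energy bound above to absorb the remaining terms produces $\|\tilde{\bm{f}}\| = 0$.

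The main obstacle is in this last step: the thermal variable $\tilde{\theta}$ has no counterpart to $\tilde{\bm{u}}(T) = \bm{0}$, so the coupling term $\gamma\nabla\tilde{\theta}$ and the memory term $k\ast\Delta\tilde{\theta}$ must be controlled through the first energy bound rather than eliminated. Orchestrating the test-function combination so that these cross-terms can be absorbed into the positive quadratic form generated by $\partial_t(g^2) > 0$ is the delicate part, and relies crucially on the enhanced regularity granted by \Cref{lem:forward_problem}(ii) to justify the repeated integrations by parts in space and time.
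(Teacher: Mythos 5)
First, note that the paper itself contains no proof of \Cref{thm:isp11}: it is imported verbatim from \cite{fmkvb2022} (``The following theorems collect the results from \cite[Theorems~2.2,~2.7,~3.1]{fmkvb2022}\dots''), so your proposal can only be measured against the strategy of that cited work, which is indeed of the general type you describe (reduction by linearity, an energy estimate, and a time-weighted identity exploiting $g\in\mathcal{C}_1$). Your first two steps are essentially sound: the reduction to the homogeneous system with source $(g(t)\widetilde{\bm{f}},0)$, zero data and $\widetilde{\bm{u}}(\cdot,T)=\bm{0}$ is correct, and the energy estimate follows by testing with $(\partial_t\widetilde{\bm{u}},\widetilde{\theta}/T_0)$ --- note that it is $\widetilde{\theta}/T_0$, not $T_0\widetilde{\theta}$, that cancels the coupling terms in the weak forms \eqref{eq:weaku}--\eqref{eq:weaktheta} --- together with the nonnegativity of $\int_0^t\scal{k\ast\nabla\widetilde{\theta}}{\nabla\widetilde{\theta}}\ds$ and Gr\"onwall.

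The decisive step, however, is missing, and the mechanism you sketch for it cannot close the argument. (i) Testing the mechanical equation with $\bm{w}(t)=\int_t^T\widetilde{\bm{u}}(s)\ds$ does make the inertial term telescope, but the source contribution becomes $\int_0^T g(t)\scal{\widetilde{\bm{f}}}{\bm{w}(t)}\dt=\scal{\widetilde{\bm{f}}}{\int_0^T\bigl(\int_0^s g\bigr)\widetilde{\bm{u}}(s)\ds}$, which is \emph{linear}, not quadratic, in $\widetilde{\bm{f}}$; no ``strictly positive quadratic form generated by $\partial_t(g^2)>0$'' appears, and you never exhibit the identity in which it would. (ii) Even granting such a form $c_0\nrm{\widetilde{\bm{f}}}^2$, your plan to control the thermal coupling and memory cross-terms ``through the first energy bound'' amounts to estimating them by $C\nrm{\widetilde{\bm{f}}}^2$ with the uncontrolled constant of \eqref{eq:estimate under (ii)}; absorbing $C\nrm{\widetilde{\bm{f}}}^2$ into $c_0\nrm{\widetilde{\bm{f}}}^2$ would require $c_0>C$, for which there is no reason and no smallness parameter available. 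A correct proof must instead arrange the weighted identity so that the indefinite terms either vanish (via the over-determination and the zero data) or carry a favourable sign (via the dissipativity of the thermal equation and the strong positive definiteness of $k$). This is precisely the part you acknowledge leaving open, and it is the only place where $g\in\mathcal{C}_1$ and $\widetilde{\bm{u}}(\cdot,T)=\bm{0}$ are actually used; as it stands the proposal is a plausible plan with a genuine gap at its core.
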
 
\begin{theorem}[Uniqueness \textbf{ISP1.2}]\label{thm:isp12}
    Let the conditions of \Cref{lem:forward_problem}(i) be fulfilled. Assume that $g \in \mathcal{C}_0$. Then, given $\bm{\chi}_T \in \Lp{2},$ there exists at most one triple
    \[
    (\bm{u}, \theta, \bm{f}) \in \Cont\left([0,T], \Hko{1}\right) \times \Cont\left([0,T], \hko{1}\right) \times \Lp{2},
    \]
    that solves the problem \eqref{eq:problem} and for which the time-averaged measurement \eqref{eq:measurementtimeaverageU} holds.
\end{theorem}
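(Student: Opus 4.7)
The plan is a linearity argument. Suppose $(\bm u_j, \theta_j, \bm f_j)$, $j=1,2$, are two triples satisfying the hypotheses and sharing the same time-averaged measurement $\bm\chi_T$, and set $\bm U := \bm u_1 - \bm u_2$, $\Theta := \theta_1 - \theta_2$, $\bm F := \bm f_1 - \bm f_2$. By linearity, $(\bm U, \Theta)$ solves the weak system \eqref{eq:weaku}--\eqref{eq:weaktheta} with momentum source $g(t)\bm F(\X)$, vanishing heat source, zero initial and boundary data, and the constraint $\int_0^T \bm U(\cdot,t)\,\dt = \bm 0$ in $\Omega$. The theorem then reduces to showing $\bm F \equiv \bm 0$.

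The measurement is best encoded through the primitives $\bm V(\X,t) := \int_0^t \bm U(\X,s)\,\ds$ and $W(\X,t) := \int_0^t \Theta(\X,s)\,\ds$, together with $G(t) := \int_0^t g(s)\,\ds$. By construction $\bm V(\cdot, 0) = \bm 0$ and, thanks to the measurement, \emph{also} $\bm V(\cdot, T) = \bm 0$; moreover, since $g \in \mathcal C_0$ is continuous and nowhere zero on $[0, T]$, it keeps constant sign, so $G \in \Cont^1([0, T])$ with $G(T) \neq 0$. Integrating the momentum equation \eqref{eq:weaku} in time on $(0, t)$ yields the first-order identity
\begin{equation*}
\rho\scal{\partial_t\bm U(t)}{\bm\varphi} + \mu\scal{\nabla\bm V(t)}{\nabla\bm\varphi} + (\lambda+\mu)\scal{\nabla\cdot\bm V(t)}{\nabla\cdot\bm\varphi} + \gamma\scal{\nabla W(t)}{\bm\varphi} = G(t)\scal{\bm F}{\bm\varphi}.
\end{equation*}
Evaluating at $t = T$, using $\bm V(T) = \bm 0$, and pairing with $\bm F$ produces the scalar balance
\begin{equation*}
G(T)\,\nrm{\bm F}^2 = \rho\scal{\partial_t\bm U(T)}{\bm F} + \gamma\scal{\nabla W(T)}{\bm F}.
\end{equation*}
Since $G(T) \neq 0$, it suffices to show that the two inner products on the right-hand side vanish.

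To this end I would combine two energy estimates. The first is the classical one obtained by testing \eqref{eq:weaku} with $\partial_t \bm U$ and \eqref{eq:weaktheta}, rescaled by $1/T_0$, with $\Theta$, and summing: the thermoelastic coupling terms cancel exactly, the convolution contributes non-negatively thanks to the strong positive-definiteness of $k$ from \eqref{eq:kernelcondition}, and the left-hand side controls $\nrm{\partial_t\bm U(T)}^2 + \nrm{\Theta(T)}^2 + \int_0^T\nrm{\nabla\Theta}^2\,\dt$ by the source integral $\int_0^T g(t)\scal{\bm F}{\partial_t\bm U(t)}\,\dt$. The second comes from testing the integrated momentum identity with $\partial_t \bm V = \bm U$: the elastic half-squares collapse at $t = T$ thanks to $\bm V(T) = \bm 0$, and the source term can be rewritten via integration by parts in time as $-\scal{\bm F}{\int_0^T g(t)\bm V(t)\,\dt}$ (which is legitimate because $G \in \Cont^1$ and $\bm V$ vanishes at both endpoints, so no derivative of $g$ itself is required). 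Cauchy--Schwarz and Poincar\'e then allow absorption, and coupling the two estimates yields a Gronwall-type inequality forcing $\partial_t \bm U(T) = \bm 0$ and $\nabla W(T) = \bm 0$. The scalar balance then gives $\bm F = \bm 0$.

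The main obstacle is the regularity required to make sense of the endpoint traces $\partial_t\bm U(T)$ and $\nabla W(T)$ as elements of $\Lp 2$: this is not guaranteed by \Cref{lem:forward_problem}(i) but does hold under \Cref{lem:forward_problem}(ii). I would therefore carry out the argument first for source data in a dense subspace of $\Lp 2$ that places the problem in the smoother regularity class, and then recover the general case by a density/stability argument using linearity and the continuous dependence on the source. A secondary delicate point is that only $g \in \Cont([0,T])$ is assumed, so integration by parts \emph{in $g$} is unavailable; the key trick is therefore to do all integrations by parts in $G \in \Cont^1$ and $\bm V$, whose double-endpoint vanishing---precisely the structural gain afforded by the measurement---is what closes the argument.
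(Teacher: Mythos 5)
The paper does not actually prove \Cref{thm:isp12}: it is imported verbatim from \cite[Theorem~2.7]{fmkvb2022}, so there is no in-paper argument to compare against. Judged on its own terms, your proposal is correct up to and including the scalar balance $G(T)\nrm{\bm F}^2=\rho\scal{\partial_t\bm U(T)}{\bm F}+\gamma\scal{\nabla W(T)}{\bm F}$ (the reduction to the difference problem, the use of $\bm V(T)=\bm 0$, and $G(T)\neq 0$ are all fine, modulo the regularity caveat you yourself flag). The fatal gap is the claim that energy estimates force $\partial_t\bm U(T)=\bm 0$ and $\nabla W(T)=\bm 0$. These quantities do \emph{not} vanish for a nontrivial source: the difference $(\bm U,\Theta)$ is driven by $g(t)\bm F$, and the two estimates you describe can only deliver \emph{upper bounds} of the form $\nrm{\partial_t\bm U(T)}\le C\nrm{\bm F}$ and $\nrm{\nabla W(T)}\le C\nrm{\bm F}$ (the first is the standard energy identity with the coupling terms cancelling and the source term absorbed by Young's inequality; the second likewise has sign-indefinite right-hand sides such as $-\int_0^T g\scal{\bm F}{\bm V}\dt$). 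Feeding these back into the scalar balance yields only $\abs{G(T)}\,\nrm{\bm F}^2\le C'\nrm{\bm F}^2$, which is no contradiction unless $\abs{G(T)}>C'$, a condition on $T$ and the data that is not assumed. A ``Gronwall-type inequality'' whose inhomogeneity is proportional to $\nrm{\bm F}^2$ cannot conclude that the solution vanishes unless $\bm F=\bm 0$ already --- the step is circular: $\partial_t\bm U(T)=\bm 0$ is essentially equivalent to the conclusion you are trying to reach. (For the decoupled wave equation one can check on a single eigenmode that the content of the identity is $\sum_k F_k^2\int_0^T g(s)\bigl(1-\cos(\sqrt{\lambda_k}(T-s))\bigr)\ds=0$; the positivity of each coefficient is what kills $\bm F$, and this positivity is spectral information that no amount of a priori energy bounding of $\partial_t\bm U(T)$ recovers.)

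A secondary but real problem is the closing density argument. Uniqueness is the statement that $N_T\bm F=\bm 0$ implies $\bm F=\bm 0$ for the given $\bm F\in\Lp2$; approximating this $\bm F$ by smoother sources $\bm F_n$ destroys the constraint $N_T\bm F_n=\bm 0$, and injectivity of a bounded operator on a dense subspace does not imply injectivity on the whole space. So even if the endpoint traces were handled, ``continuous dependence on the source'' does not transfer the conclusion from smooth $\bm F$ to general $\bm F\in\Lp2$. The published proof in \cite{fmkvb2022} instead exploits the sign condition $0\notin g([0,T])$ through a weighted (division by $g$, respectively by $G$, after integrating the system in time) energy identity in which every term is nonnegative and the measurement $\bm V(T)=\bm 0$ annihilates the boundary contributions; that is the positivity mechanism your plan is missing.
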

\begin{theorem}[Uniqueness \textbf{ISP2}] \label{thm:isp2}
    Let the conditions of \Cref{lem:forward_problem}(ii) be fulfilled. Assume that $g \in \mathcal{C}_{1,\ast}.$	Then, given $\psi_T \in \lp{2}$, there exists at most one triple
    \[
    (\bm{u}, \theta, f) \in \Cont\left([0,T], \Hko{1}\right) \times \Cont\left([0,T], \hko{1}\right) \times \lp{2},
    \]
    that solves the problem \eqref{eq:problem} and for which the time-averaged measurement \eqref{eq:measurementtimeaveragetheta} holds.
\end{theorem}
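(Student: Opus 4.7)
The proof follows the standard Slodi\v{c}ka-type uniqueness strategy for source identification in coupled systems, adapted to the thermoelastic system of type III.

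\textbf{Linearization.} I would first suppose two admissible triples $(\bm{u}_j, \theta_j, f_j)$, $j=1,2$, solve \eqref{eq:problem} with decomposition \eqref{eq:decompH} and share the same measurement $\psi_T$. Form their differences $(\bm{U}, \Theta, F) = (\bm{u}_1 - \bm{u}_2, \theta_1 - \theta_2, f_1 - f_2)$. By linearity, $(\bm{U}, \Theta)$ satisfies the weak system with zero initial/boundary data, zero elastic source, and heat source $g(t)F(\X)$, while the measurement \eqref{eq:measurementtimeaveragetheta} becomes $W(T) = 0$ in $\lp{2}$, where $W(t) := \int_0^t \Theta(s)\,ds$, so also $W(0) = 0$.

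\textbf{Energy identity via coupling cancellation.} I would test the weak elastic equation \eqref{eq:weaku} with $\bm{\varphi} = T_0 \partial_t \bm{U}$ and the weak heat equation \eqref{eq:weaktheta} with $\psi = \Theta$, and add. The coupling contributions $T_0\gamma\scal{\nabla\Theta}{\partial_t\bm{U}}$ and $-T_0\gamma\scal{\partial_t\bm{U}}{\nabla\Theta}$ cancel by symmetry of the $\Leb^2$-inner product. Integrating in $t \in (0,T)$ and exploiting the strong positive definiteness of $k$ yields
\[
T_0 E_{\mathrm{el}}(T) + \tfrac{\rho C_s}{2}\nrm{\Theta(T)}^2 + \kappa\int_0^T \nrm{\nabla\Theta(t)}^2\dt + \int_0^T \scal{(k\ast\nabla\Theta)(t)}{\nabla\Theta(t)}\dt = \int_0^T g(t)\scal{F}{\Theta(t)}\dt,
\]
where $E_{\mathrm{el}}(T) := \tfrac{\rho}{2}\nrm{\partial_t\bm{U}(T)}^2 + \tfrac{\mu}{2}\nrm{\nabla\bm{U}(T)}^2 + \tfrac{\lambda+\mu}{2}\nrm{\nabla\cdot\bm{U}(T)}^2$. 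Since $\Theta = \partial_t W$ and $W(0) = W(T) = 0$, integration by parts transforms the right-hand side into $-\int_0^T g'(t)\scal{F}{W(t)}\dt$.

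\textbf{Auxiliary identities.} I would derive a complementary estimate by applying the same energy procedure to the once-in-time integrated system for the pair $(\bm{V}, W)$, with $\bm{V}(t) := \int_0^t \bm{U}(s)\,ds$. This system has exactly the same structure as the original one, but with driving source $G(t)F(\X)$, where $G(t) := \int_0^t g(s)\,ds$. Using $W(T) = 0$, the corresponding energy identity has nonnegative left-hand side and right-hand side $\int_0^T G(t)\scal{F}{W(t)}\dt$. Finally, integrating the heat equation once in time and evaluating at $t = T$ produces, together with $W(T) = 0$, the weak relation $G(T) F = \rho C_s\Theta(T) + T_0\gamma\nabla\cdot\bm{U}(T) - (k\ast\Delta W)(T)$; the assumption $g \in \mathcal{C}_{1,\ast}$ guarantees that $g$ has constant sign on $[0,T]$, so $G(T) \neq 0$ and this relation is invertible for $F$.

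\textbf{Main obstacle.} The heart of the argument, and its principal difficulty, lies in combining the two nonnegative energy identities with this pointwise relation to force $F \equiv 0$. The plan is to substitute the expression for $F$ coming from the integrated heat equation at $t=T$ into the right-hand sides of the energy identities, absorb the resulting terms on the left via Young's and Poincar\'e inequalities, and exploit the monotonicity of $g^2$ (equivalently $gg' \geq 0$) together with the sign condition on $g$ to close a Gronwall-type estimate. Once $F = 0$ is established, the homogeneous well-posedness from \Cref{lem:forward_problem}(ii) immediately gives $(\bm{U}, \Theta) = (\bm{0}, 0)$, completing the proof.
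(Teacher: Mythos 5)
The paper itself does not prove \Cref{thm:isp2}; it imports it from \cite[Theorem~3.1]{fmkvb2022}, so your attempt has to be measured against that argument. Your opening moves are correct and standard for this family of problems: linearising, cancelling the coupling terms by testing with $(T_0\partial_t\bm{U},\Theta)$, using the strong positive definiteness of $k$, and noting that $W(t)=\int_0^t\Theta(s)\ds$ converts the time-averaged datum into the two-point condition $W(0)=W(T)=0$. But you stop exactly at the decisive step, and the plan you sketch for it would not close. Your two energy identities only give $\int_0^T g'(t)\scal{F}{W(t)}\dt\le 0$ and $\int_0^T G(t)\scal{F}{W(t)}\dt\ge 0$: two integral inequalities with different nonnegative weights acting on the same scalar function $t\mapsto\scal{F}{W(t)}$, which do not force $F=0$. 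Substituting $F=G(T)^{-1}\bigl[\rho C_s\Theta(T)+T_0\gamma\nabla\cdot\bm{U}(T)-(k\ast\Delta W)(T)\bigr]$ into these right-hand sides pairs time-$T$ quantities against the entire history of $W$; Young and Poincar\'e can absorb such terms into the available left-hand sides only if coefficients like $\sup_t g'(t)/G(T)$ are small, which is not assumed, and Gronwall is inapplicable because the information is a global two-point constraint rather than an evolution inequality. Tellingly, the hypothesis $\partial_t(g^2)\ge 0$, the defining feature of $\mathcal{C}_{1,\ast}$, never enters any of your concrete computations.

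The missing idea is to divide by $g$ \emph{before} testing: take $\psi=\Theta(t)/g(t)$ in the heat equation and $\bm{\varphi}=T_0\partial_t\bm{U}(t)/g(t)$ in the elastic equation (legitimate because $0\notin g([0,T])$), add, and integrate over $(0,T)$. The source term then becomes $\scal{F}{\int_0^T\Theta\dt}=0$ directly from the measurement, and one is left (with the harmless normalisation $g>0$, writing $E(t)=\frac{\rho}{2}\nrm{\partial_t\bm{U}(t)}^2+\frac{\mu}{2}\nrm{\nabla\bm{U}(t)}^2+\frac{\lambda+\mu}{2}\nrm{\nabla\cdot\bm{U}(t)}^2$) with
\begin{multline*}
0=\frac{1}{g(T)}\Bigl(T_0E(T)+\tfrac{\rho C_s}{2}\nrm{\Theta(T)}^2\Bigr)
+\int_0^T\frac{g'(t)}{g(t)^2}\Bigl(T_0E(t)+\tfrac{\rho C_s}{2}\nrm{\Theta(t)}^2\Bigr)\dt\\
+\kappa\int_0^T\frac{\nrm{\nabla\Theta(t)}^2}{g(t)}\dt
+\int_0^T\frac{1}{g(t)}\scal{(k\ast\nabla\Theta)(t)}{\nabla\Theta(t)}\dt .
\end{multline*}
Every term on the right is nonnegative: $g'/g^2=\partial_t(g^2)/(2g^3)\ge 0$ is precisely where $\mathcal{C}_{1,\ast}$ is used, and the memory term keeps its sign because $1/g$ is nonnegative and nonincreasing, so writing $\int_0^T a\,\mathrm{d}Q=a(T)Q(T)-\int_0^T a'Q$ with $Q(\eta)=\int_0^\eta\scal{k\ast\nabla\Theta}{\nabla\Theta}\dt\ge 0$ preserves positivity. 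Hence $\nabla\Theta\equiv 0$, so $\Theta\equiv 0$ by the Dirichlet condition; the elastic equation then decouples with zero data, giving $\bm{U}\equiv\bm{0}$ by energy conservation, and finally $g(t)F=0$ with $g\neq 0$ yields $F=0$. Your steps involving the once-integrated system and the relation at $t=T$ are correct but are detours that are not needed once this division-by-$g$ identity is in place.
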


%%%%%%%%%%%%%%%%%%%%%%%%%%%%%%%%%%%%%%%%%%%%%%%%%%%%%%%%%%%%%%%%%%%%%%%%%%%%%%%%%%%%%%%%%%%%%%%%%%%%%%%%%%%%%%
%%%%%%%%%%%%%%%%%%%%%%%%%%%%%%%%%%%%%%%%%%%%%%%%%%%%%%%%%%%%%%%%%%%%%%%%%%%%%%%%%%%%%%%%%%%%%%%%%%%%%%%%%%%%%%
\section{Landweber scheme}\label{sec:landweber}
In this section, we describe the Landweber-Friedman iteration, first applied to the problem \textbf{ISP1.1} of finding $\bm{f}(\X)$ from the given final-in-time measurement $\bm{u}(\X,T) = \bm{\xi}_T(\X)\in\Lp{2}$ under the decomposition \eqref{eq:decomP} for $\bm{p}$ and knowledge of the heat source $h.$ According to \Cref{thm:isp11}, we will assume that $g \in \mathcal{C}_1$ and the assumptions of \Cref{lem:forward_problem}(ii) are met for the other given data, which guarantees the uniqueness of a solution to the problem. At the end of this section, we provide the adaptions for the \textbf{ISP1.2} and \textbf{ISP2} cases.

\subsection{Description of the method for \textbf{ISP1.1}}\label{subsec:landweber:discription}
By linearity, we construct two subproblems,
\begin{equation}
    \label{eq:ISP11problemP1}
    \tag{$P_1^{1}$}
    \left\{
    \begin{array}{rll}
    \rho \partial_{tt} \bm{u} - \nabla \cdot \sigma(\bm{u}) + \gamma \nabla \theta &= g(t)\bm{f}(\X)  & \quad \text{ in } Q_T \\
       % \rho \partial_{tt} \bm{u} - \mu\Delta \bm{u}  - (\lambda + \mu) \nabla (\nabla \cdot \bm{u}) + \gamma \nabla \theta &= g(t)\bm{f}(\X)  & \quad \text{ in } Q_T \\
        \rho C_s \partial_t \theta - \kappa \Delta \theta - k\ast \Delta \theta + T_0 \gamma \nabla \cdot \partial_t \bm{u} &= 0&  \quad  \text{ in } Q_T \\
        \bm{u}(\X,t) = \bm{0}, \quad \theta(\X,t) &=0 & \quad  \text{ in } \Sigma_T \\
        \bm{u}(\X,0) = \bm{0}, \quad \partial_t \bm{u}(\X,0) =\bm{0}, \quad \theta(\X,0) &= 0 & \quad \text{ in } \Omega,
    \end{array} 
    \right.
\end{equation}
which captures the unknown function $\bm{f}(\X),$ and 
\begin{equation}
    \label{eq:ISP11problemP2}
    \tag{$P_2^{1}$}
    \left\{
    \begin{array}{rll}
    \rho \partial_{tt} \bm{u}   - \nabla \cdot \sigma(\bm{u}) + \gamma \nabla \theta &= \bm{r}(\X,t) & \quad  \text{ in } Q_T \\
        %\rho \partial_{tt} \bm{u} - \mu\Delta \bm{u}  - (\lambda + \mu) \nabla (\nabla \cdot \bm{u}) + \gamma \nabla \theta &= \bm{r}(\X,t) & \quad  \text{ in } Q_T \\
        \rho C_s \partial_t \theta - \kappa \Delta \theta - k\ast \Delta \theta + T_0 \gamma \nabla \cdot \partial_t \bm{u} &= h(\X,t) & \quad  \text{ in } Q_T \\
        \bm{u}(\X,t) = \bm{0}, \quad \theta(\X,t) &=0 & \quad  \text{ in } \Sigma_T \\
        \bm{u}(\X,0) = \overline{\bm{u}}_0(\X), \,\partial_t \bm{u}(\X,0)=  \overline{\bm{u}}_1(\X), \, \theta(\X,0) &= \overline{\theta}_0(\X) & \quad \text { in } \Omega
    \end{array} 
    \right.
\end{equation}
which captures all the known data and can be solved directly. Note that problem \eqref{eq:ISP11problemP1} is merely a special case of \eqref{eq:ISP11problemP2} and that, by linearity, the solution $(\bm{u}, \theta, \bm{f})$ to the inverse problem is the sum of the solutions to the subproblems, i.e.\
$$
\bm{u} = \bm{u}_\star + \widetilde{\bm{u}}, \quad \theta = \theta_\star + \widetilde{\theta},
$$
where $(\bm{u}_\star, \theta_\star)$ is the solution to \eqref{eq:ISP11problemP2} and $(\widetilde{\bm{u}}, \widetilde{\theta}, \bm{f})$ is the  solution to \eqref{eq:ISP11problemP1}.  

The procedure for the reconstruction of the load source is as follows. First, let $(\bm{u}_\star, \theta_\star)$ solve \eqref{eq:ISP11problemP2} and compute the part of the measurement $\bm{\xi}_T$ that is captured by this part of the solution, i.e.\
$$
\bm{\xi}_{T,\star}(\X) = \bm{u}_\star(\X,T), \quad \X \in \Omega. 
$$
The remainder of the measurement
$$
\bm{\Xi}_T(\X) = \bm{\xi}_T(\X) - \bm{\xi}_{T,\star}(\X)
$$
then needs to be captured by the solution of \eqref{eq:ISP11problemP1}. In this regard, the problem \eqref{eq:ISP11problemP2} needs to be computed only once, while \eqref{eq:ISP11problemP1} will be used iteratively. 

Consider the measurement operator
\begin{equation}
    \label{eq:ISP11measurementmapM}
    M_T \colon \Lp{2} \to \Lp{2} \colon \quad \bm{f} \mapsto \widetilde{\bm{u}}(\cdot,T ; \bm{f}),
\end{equation}
where $(\widetilde{\bm{u}}(\X,t; \bm{f}), \widetilde{\theta}(\X,t; \bm{f}))$ forms a solution to \eqref{eq:ISP11problemP1} with $\bm{f}$ as the right-hand side. The extra argument in the notation is to emphasise the dependence of the obtained solution on the source function.  The following lemma shows that the inverse problem under consideration is ill-posed.

\begin{lemma}\label{lem:MTcompactlinearL2L2}
    The map $M_T \colon \Lp{2} \to \Lp{2}$ defined in \eqref{eq:ISP11measurementmapM} is a compact linear operator.
\end{lemma}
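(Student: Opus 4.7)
The plan is to establish linearity by a direct appeal to the linearity of the PDE system together with the homogeneous initial data in \eqref{eq:ISP11problemP1}, and then to establish compactness by showing that $M_T$ factors through a compact embedding, namely $\Lp{2} \xrightarrow{\text{bounded}} \Hko{1} \xhookrightarrow{\text{compact}} \Lp{2}$.

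For linearity, I would observe that if $(\widetilde{\bm{u}}_j, \widetilde{\theta}_j)$ solves \eqref{eq:ISP11problemP1} with source $g(t)\bm{f}_j(\X)$ for $j = 1,2$, then by the linearity of the equations, vanishing initial conditions, and homogeneous boundary conditions, the pair $(\alpha_1 \widetilde{\bm{u}}_1 + \alpha_2 \widetilde{\bm{u}}_2,\, \alpha_1 \widetilde{\theta}_1 + \alpha_2 \widetilde{\theta}_2)$ solves \eqref{eq:ISP11problemP1} with source $g(t)(\alpha_1 \bm{f}_1 + \alpha_2 \bm{f}_2)(\X)$. Uniqueness from \Cref{lem:forward_problem} then yields $M_T(\alpha_1 \bm{f}_1 + \alpha_2 \bm{f}_2) = \alpha_1 M_T(\bm{f}_1) + \alpha_2 M_T(\bm{f}_2)$.

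For compactness, I would fix $\bm{f} \in \Lp{2}$ and verify that the assumptions of \Cref{lem:forward_problem}(ii) are met for problem \eqref{eq:ISP11problemP1}. Since $g \in \mathcal{C}_1 \subset \Cont^1([0,T])$, the source $\bm{p}(\X,t) = g(t)\bm{f}(\X)$ lies in $\HHi^1((0,T),\Lp{2})$ with $\nrm{\bm{p}}_{\HHi^1((0,T),\Lp{2})} \leq C(g,T) \nrm{\bm{f}}_{\Lp{2}}$, the heat source vanishes, and the initial data $\overline{\bm{u}}_0 = \bm{0},\, \overline{\bm{u}}_1 = \bm{0},\, \overline{\theta}_0 = 0$ trivially belong to $\HHi^2 \cap \Hko{1},\, \Hko{1},\, \hko{1}$ respectively; the compatibility condition at $t=0$ is immediate since both PDEs reduce to identities determining $\partial_{tt}\bm{u}(0)$ and $\partial_t \theta(0)$ from zero data. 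The a priori estimate \eqref{eq:estimate under (ii)} then yields
\begin{equation*}
\nrm{\widetilde{\bm{u}}(\cdot,T;\bm{f})}_{\Hko{1}}^2 \leq \max_{t \in [0,T]} \nrm{\widetilde{\bm{u}}(t;\bm{f})}_{\Hko{1}}^2 \leq C \nrm{g\bm{f}}_{\lpkIX{2}{\Lp{2}}}^2 \leq \widetilde{C}\,\nrm{\bm{f}}_{\Lp{2}}^2,
\end{equation*}
so $M_T$ is bounded from $\Lp{2}$ into $\Hko{1}$.

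Finally, I would combine this boundedness with the Rellich--Kondrachov compact embedding $\Hko{1} \hookrightarrow \Lp{2}$ (valid since $\Omega$ is bounded with Lipschitz boundary) to conclude that $M_T$ maps bounded sets of $\Lp{2}$ into relatively compact sets of $\Lp{2}$, hence is compact. I do not foresee a major obstacle; the only slightly delicate point is checking that the compatibility condition at $t=0$ required by \Cref{lem:forward_problem}(ii) is indeed satisfied for the homogeneous initial data in \eqref{eq:ISP11problemP1}, but this is transparent.
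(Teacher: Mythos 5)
Your proof is correct and follows essentially the same route as the paper's: linearity from the linearity of the system with homogeneous data, boundedness of $M_T$ from $\Lp{2}$ into $\Hko{1}$ via the a priori estimate \eqref{eq:estimate under (ii)}, and compactness from the Rellich--Kondrachov embedding $\Hko{1}\hookrightarrow\Lp{2}$. The only difference is that you explicitly verify the hypotheses of \Cref{lem:forward_problem}(ii) (the temporal regularity of $g(t)\bm{f}(\X)$ coming from $g\in\mathcal{C}_1$ and the compatibility condition at $t=0$), which the paper leaves implicit.
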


\begin{proof}
    The linearity of $M_T$ is checked by a standard argument. Let $\bm{f}_1, \bm{f}_2 \in \Lp{2}$ and denote with $(\bm{u}_i, \theta_i)$ the solution to \eqref{eq:ISP11problemP1} corresponding with $\bm{f}_i, i =1,2.$  Then, for arbitrary $\eta \in \mathbb{R}, (\bm{u}, \theta)=(\bm{u}_1 + \eta \bm{u}_2, \theta_1 + \eta \theta_2)$ is the solution to \eqref{eq:ISP11problemP1} corresponding to $\bm{f}_1 + \eta\bm{f}_2,$ i.e. 
    \begin{align*}
    M_T(\bm{f}_1 + \eta \bm{f}_2) &= \bm{u}(\cdot,T;\bm{f}_1 + \eta\bm{f}_2)\\ 
    &=\bm{u}_1(\cdot,T;\bm{f}_1) + \eta\bm{u}_2(\cdot,T;\bm{f}_2)=M_T(\bm{f}_1) + \eta M_T(\bm{f}_2),
    \end{align*}
    for all $\eta \in \mathbb{R}.$
    Moreover, the a priori estimate \eqref{eq:estimate under (ii)} shows that $M_T$ is bounded from $\Lp{2}$ with values in $\Hko{1}\subset\Lp{2}.$ The Rellich-Kondrachov theorem gives the compact embedding from $\Hko{1}$ into $\Lp{2},$ hence $M_T,$ seen as a map from $\Lp{2}$ to itself, is compact as the composition of a compact and bounded operator.\qedhere
\end{proof}

Notice that the remainder of the measurement for the exact source $\bm{f}$ satisfies
\begin{equation}
    \label{eq:isp11operatereq}
    \bm{\Xi}_T = M_T(\bm{f}).
\end{equation}
Therefore, the objective in \textbf{ISP1.1} is to invert the operator equation \eqref{eq:isp11operatereq}, i.e. given $\bm{\Xi}_T$, solve \eqref{eq:isp11operatereq} for $\bm{f}.$ This corresponds to solving the fixed point equation
\begin{equation}
    \label{eq:isp11fixedpointeq}
    \bm{f} = \bm{f} - \alpha M_T\left(M_T(\bm{f}) - \bm{\Xi}_T\right), \quad \alpha >0,
\end{equation}
where $\alpha$ is a relaxation parameter. Applying the method of successive approximations yields the following procedure
\begin{equation} 
\label{eq:isp11successiveapproxf}
\bm{f}_k = \bm{f}_{k-1} - \alpha M_T\left(M_T(\bm{f}_{k-1}) - \bm{\Xi}_T\right), \quad k = 1,2,\dots,
\end{equation} 
with initial guess  $\bm{f}_0 \colon \overline{\Omega} \to \mathbb{R}$ satisfying $\bm{f}_0\in\Lp{2}$. % $\bm{f}_0 \in \Lp{2},$ 

\begin{remark}
Note that since $\text{im}(M_T) \subseteq \Hko{1},$ %which follows from the well-posedness result $\bm{u}(\cdot,\cdot;\bm{f}) \in \cIX{\Hko{1}}$
the values of the iterates $\bm{f}_k$ in \eqref{eq:isp11successiveapproxf} at the boundary remain fixed and equal to $\left.\bm{f}_k \right|_{\partial\Omega}=\left.\bm{f}_0 \right|_{\partial\Omega}$ at each iteration step $k$. 
\end{remark}

\subsection{Algorithm} \label{subsec:landweber:algorithm}
The above discussion leads to the following algorithm for the reconstruction of $(\bm{u}, \theta, \bm{f})$ for \textbf{ISP1.1} based on \eqref{eq:isp11successiveapproxf} and given a choice of $\alpha>0.$ The approach is similar to the algorithm discussed in \cite{VanBockstal2014,VanBockstal2017b}. 
\begin{enumerate}
    \item Let $(\bm{u}_\star, \theta_\star)$ be the solution of \eqref{eq:ISP11problemP2} and determine $\bm{\Xi}_T =  \bm{\xi}_T - \bm{\xi}_{T,\star}.$

    \item Make a guess $\bm{f}_0,$ %\in\Lp{2},$ 
    and solve \eqref{eq:ISP11problemP1}, let $(\widetilde{\bm{u}}_0, \widetilde{\theta}_0)=(\bm{u}(\cdot,\cdot; \bm{f}_0), \theta(\cdot,\cdot; \bm{f}_0))$ be its solution.

    \item Assume $\bm{f}_k$ and $(\widetilde{\bm{u}}_k, \widetilde{\theta}_k)$ have been constructed. Let  $\bm{F}_k = \widetilde{\bm{u}}_k(\cdot,T) - \bm{\Xi}_T$ and solve \eqref{eq:ISP11problemP1} with the choice of $\bm{f} = \bm{F}_k$ in the load source, let $(\bm{w}_k, \eta_k):=(\bm{u}(\cdot,\cdot;\bm{F}_k),\theta(\cdot,\cdot; \bm{F}_k))$ be the solution.

    \item Update $\bm{f}_k$ via the rule
    $$
    \bm{f}_{k+1} = \bm{f}_k - \alpha \bm{w}_k(\cdot, T),
    $$

    and solve \eqref{eq:ISP11problemP1} using $\bm{f}_{k+1}$ to find the new solution pair $(\widetilde{\bm{u}}_{k+1}, \widetilde{\theta}_{k+1})$. 

    \item Repeat the above steps until a stopping criterion is met, say at step $K.$ The approximating solution to the inverse problem is then given by the triple $(\bm{u}_\star + \widetilde{\bm{u}}_K, \theta_\star + \widetilde{\theta}_K, \bm{f}_K).$
\end{enumerate}

\subsection{Convergence and stopping criterion}\label{subsec:landweber:convergencecriteria}
Due to the well-posedness result in \Cref{lem:forward_problem}, both problems \eqref{eq:ISP11problemP1} and \eqref{eq:ISP11problemP2} are well-posed. The convergence of the algorithm in \Cref{subsec:landweber:algorithm} is shown in the next theorem if the relaxation parameter $\alpha>0$ is suitably bounded. The crucial lemma is found in \cite[Theorem~3]{slodicka2010} and gives a general version of the proof of convergence for non self-adjoint operators in Landweber iterations, see also \cite[Theorem~6.1]{Engl1996}. More precisely, we show that the iterations \eqref{eq:isp11successiveapproxf} converge in $\Lp{2}$ when the (positive) relaxation parameter $\alpha$ is bounded by the inverse of the square of the operator norm of $M_T.$ The proof is as in \cite[Theorem 3.3]{VanBockstal2014} and \cite[Theorem 4.3]{VanBockstal2017b} and is added here in order to be self-contained and to see the analogy between the considered inverse problems in this work.

\begin{theorem}\label{thm:convergenceLandweberISP11}
Assume that the conditions of \Cref{lem:forward_problem}(ii) are fulfilled and $g \in \mathcal{C}_1$. Assume that  $0< \alpha < \nrm{M_T}_{{\mathcal L}(\Lp{2},\Lp{2})}^{-2}.$  Let $(\bm{u}, \theta, \bm{f}) = (\bm{u}_\star + \bm{v}, \theta_\star + \zeta, \bm{f})$ be the unique solution to the inverse problem \textbf{ISP1.1}, where $(\bm{u}_\star, \theta_\star)$ solves \eqref{eq:ISP11problemP2} and $(\bm{v}, \zeta, \bm{f})$ solves \eqref{eq:ISP11problemP1} under the condition $\bm{v}(\cdot, T) = \bm{\Xi}_T.$ Let  $(\bm{v}_k, \eta_k, \bm{f}_k)$ the $k$-th approximation according to the iterative procedure in \Cref{subsec:landweber:algorithm}. Then, it holds that
\begin{equation}
\label{eq:convergencevkzetak}
        \lim\limits_{k\to \infty}\left( \nrm{\bm{v} - \bm{v}_k}_{\cIX{\Hko{1}}} + \nrm{\zeta-\zeta_k}_{\cIX{\hko{1}}}  \right)  = 0
    \end{equation}
and 
\begin{equation}
\label{eq:convergencevtk}
    \lim\limits_{k\to \infty} \left(\nrm{\partial_t \bm{v} - \partial_t \bm{v}_k}_{\lpkIX{2}{\Lp{2}}}  \right) = 0
\end{equation}
for every choice of initial guess $f_0 \in \Lp{2}.$
\end{theorem}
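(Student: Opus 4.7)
The plan is to reduce convergence of the iterates $\bm{f}_k$ in $\Lp{2}$ to an abstract Landweber convergence statement, and then transfer this to convergence of the associated state variables via the a priori estimate of \Cref{lem:forward_problem}(ii).

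First, I introduce the error $\bm{e}_k := \bm{f} - \bm{f}_k$. Since the exact source satisfies $M_T(\bm{f}) = \bm{\Xi}_T$ by \eqref{eq:isp11operatereq}, it is itself a fixed point of \eqref{eq:isp11fixedpointeq}. Subtracting the iteration \eqref{eq:isp11successiveapproxf} from this identity and using linearity of $M_T$ (proven in \Cref{lem:MTcompactlinearL2L2}) yields the error recursion
\begin{equation*}
\bm{e}_k = (I - \alpha M_T^2)\bm{e}_{k-1}, \qquad k = 1, 2, \dots,
\end{equation*}
starting from $\bm{e}_0 = \bm{f} - \bm{f}_0 \in \Lp{2}$. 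The operator $M_T$ is injective by \Cref{thm:isp11}: if $M_T(\bm{f}) = \bm{0}$, then the data measurement is zero and uniqueness forces $\bm{f} \equiv \bm{0}$. With injectivity established and the stipulated bound $\alpha \nrm{M_T}_{\mathcal{L}(\Lp{2},\Lp{2})}^2 < 1$, the abstract Landweber convergence result \cite[Theorem~3]{slodicka2010} (a generalisation of \cite[Theorem~6.1]{Engl1996} to the non-self-adjoint setting) yields $\bm{e}_k \to \bm{0}$ in $\Lp{2}$ as $k\to\infty$.

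Second, I transfer this convergence to the state variables via linearity of \eqref{eq:ISP11problemP1}. The differences $(\bm{v} - \bm{v}_k, \zeta - \zeta_k)$ solve problem \eqref{eq:ISP11problemP1} with load source $g(t)\bm{e}_k(\X)$ and zero heat source, and with homogeneous initial data. Since $g \in \mathcal{C}_1 \subset \Cont^1([0,T])$, the source $g(t)\bm{e}_k(\X)$ belongs to $\HHi^1((0,T), \Lp{2})$, so the hypotheses of \Cref{lem:forward_problem}(ii) are satisfied for the difference system (the compatibility of the PDEs at $t=0$ follows from the zero initial data, which fixes the second derivatives, rather than constraining $\bm{e}_k$). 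The a priori estimate \eqref{eq:estimate under (ii)} then gives
\begin{equation*}
\nrm{\bm{v} - \bm{v}_k}_{\cIX{\Hko{1}}}^2 + \nrm{\zeta - \zeta_k}_{\cIX{\hko{1}}}^2 + \nrm{\partial_t \bm{v} - \partial_t \bm{v}_k}_{\lpkIX{2}{\Lp{2}}}^2 \leq C\, \nrm{g}_{\Leb^2(0,T)}^2 \, \nrm{\bm{e}_k}^2,
\end{equation*}
where $C$ depends on the fixed data and $T$ but not on $k$. Sending $k\to\infty$ and invoking $\bm{e}_k \to \bm{0}$ in $\Lp{2}$ delivers both \eqref{eq:convergencevkzetak} and \eqref{eq:convergencevtk}.

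The main obstacle will be the correct identification of the iteration \eqref{eq:isp11successiveapproxf} with the framework of \cite[Theorem~3]{slodicka2010}. The textbook Landweber scheme for $M_T \bm{f} = \bm{\Xi}_T$ would employ the adjoint $M_T^*$, whereas here we iterate with $M_T$ itself; one must verify that the cited theorem applies to this precise form and yields convergence under exactly the bound $\alpha < \nrm{M_T}_{\mathcal{L}(\Lp{2},\Lp{2})}^{-2}$. Once this identification is settled, the remainder of the argument is bookkeeping: an error recursion followed by an application of \eqref{eq:estimate under (ii)} to a linear problem driven by a source proportional to $\bm{e}_k$.
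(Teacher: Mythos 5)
Your proposal is correct and follows essentially the same route as the paper: derive the error recursion $\bm{f}_k-\bm{f}=(I-\alpha M_T^2)(\bm{f}_{k-1}-\bm{f})$, invoke \cite[Theorem~3]{slodicka2010} under the bound $\alpha<\nrm{M_T}^{-2}_{\mathcal{L}(\Lp{2},\Lp{2})}$ to get $\bm{f}_k\to\bm{f}$ in $\Lp{2}$, and transfer this to the state variables via the a priori estimate \eqref{eq:estimate under (ii)} applied to the linear difference system with source $g(t)(\bm{f}-\bm{f}_k)$. Your explicit remark that injectivity of $M_T$ (guaranteed by the uniqueness result of \Cref{thm:isp11}) is needed to identify the Landweber limit with the exact $\bm{f}$ is a worthwhile detail that the paper leaves implicit.
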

\begin{proof}

    Using the linearity of $M_T$ and the fact that $M_T(\bm{f}) = \bm{\Xi}_T,$ we write
    \begin{align*}
        \bm{f}_k &= \bm{f}_{k-1} - \alpha M_T(M_T(\bm{f}_{k-1}) - \bm{\Xi}_T) \\
        &= \bm{f}_{k-1} - \alpha M_T(M_T(\bm{f}_{k-1} - \bm{f}))
    \end{align*}
    hence, denoting with $I$ the identity operator on $\lp{2}$, we get 
    $$
    \bm{f}_k - \bm{f} = (I-\alpha M_T^2)(\bm{f}_{k-1} -\bm{f}),
    $$
    Upon taking norms, we see that the condition $0 < \alpha < \nrm{M_T}_{{\mathcal L}(\Lp{2},\Lp{2})}^{-2}$ implies convergence of $\bm{f}_k$ to $\bm{f}$ in $\Lp{2}$ for arbitrary $\bm{f}_0 \in \Lp{2},$ by applying \cite[Theorem~3]{slodicka2010}.
    Inequality \eqref{eq:estimate under (ii)} implies that we have convergence of $\bm{v}_k \to \bm{v}$ in $\cIX{\Hko{1}}$ and $\zeta_k \to \zeta$ in $\cIX{\hko{1}}$ and, moreover, $\partial_t \bm{v}_k \to \partial_t \bm{v}$ in $\lpkIX{2}{\Lp{2}}.$ This observation completes the proof.
\end{proof}
% alternatief ipv proof
% See for instance the proof of \cite[Theorem 3.3]{VanBockstal2014}.

To simulate real measurements, a randomly generated noise is added to the measurement in the numerical experiments (see  \Cref{sec:numerical}). That is, instead of working with $\bm{\xi}_T$, the available measurement is  $\bm{\xi}_T^e \in \Lp{2}$ and it is assumed that it satisfies
\begin{equation}
    \label{eq:eq:perturbedchiTnorm}
\nrm{\bm{\xi}_T - \bm{\xi}_T^e} \leq e,
\end{equation}
where $e\geq 0$ denotes the noise level. Therefore, the remainder of the measurement $\bm{\Xi}_T$ is perturbed as well to $\bm{\Xi}_T^e$ and it holds that
\begin{equation} 
\label{eq:perturbedChiTnorm}
\nrm{\bm{\Xi}_T - \bm{\Xi}_T^e} \leq e.
\end{equation}
The proposed algorithm can be recast in this noisy case. The constructed approximations $(\widetilde{\bm{u}}_k ^e, \widetilde{\theta}_k^e, \bm{f}_k^e)$ are only influenced by the noise via the measurement $\bm{\Xi}_T^e$ as there is no noise considered in the initial data. Let us denote with $E_{k, \bm{\Xi}_T}$ the absolute $\Lp{2}$-error between the noisy measurement $\bm{\Xi}_T^e$ and the $k$-th approximated solution evaluated at final time $\widetilde{\bm{u}}_k^e(\cdot, T),$ i.e.\ 
\begin{equation} 
\label{eq:stoppingisp11}
E_{k,\bm{\Xi}_T} = \nrm{ \bm{\Xi}_T^{e} - \widetilde{\bm{u}}_k^e(\cdot,T)}.
\end{equation} 
Then, given the noise level $e,$ Mozorov's discrepancy principle is used to device the stopping criterion in order to stop the iterations at the first index $K=k$ for  which \[
E_{k, \bm{\Xi}_T} \leq r e,\] 
where $r>1$ is fixed, see e.g.\ \cite[Eq~(6.7)]{Engl1996}.
If the noise level $e$ is unknown, one can use a heuristic stopping device as proposed in \cite{Real2024}. This approach ensures a fully data-driven approach, as no information is needed on the precision and accuracy of the tool to produce the measurement.

\subsection{Adaptations for \textbf{ISP1.2}}\label{subsec:adaptations12land}
The analysis above carries over to the problem of finding $\bm{f}$ in \textbf{ISP1.2}, given the measurement $\bm{\chi}_T \in\Lp{2}$. We now assume that the conditions of \Cref{lem:forward_problem}(i) are met and that the temporal part $g \in \mathcal{C}_0.$ Then, \Cref{thm:isp12}  ensures the uniqueness of a solution to the inverse problem. The same form of the subproblems \eqref{eq:ISP11problemP1} and \eqref{eq:ISP11problemP2} can be used in this case. However, letting $(\bm{u}_\ast, \theta_\ast)$ be solutions to \eqref{eq:ISP11problemP1}, we now compute the captured portion of the measurement and the remainder as
$$
\bm{\chi}_{T,\ast}(\X) = \int_0^T \bm{u}_\ast(\X,t)\ds, \quad \bm{X}_T(\X) = \bm{\chi}_T(\X) - \bm{\chi}_{T,\ast}(\X).
$$
This remainder $\bm{X}_T$ then needs to be captured by the solution of \eqref{eq:ISP11problemP1}. For it, we now consider the mapping.
\begin{equation} 
\label{eq:ISP12measurementmapN}
N_T \colon \Lp{2} \to  \Lp{2}  \colon \bm{f} \mapsto \int_0^T \widetilde{\bm{u}}(\cdot, t; \bm{f})\dt 
\end{equation}
where $(\widetilde{\bm{u}}(\cdot, \cdot,\bm{f}), \widetilde{\theta}(\cdot,\cdot;\bm{f}))$ is a solution to \eqref{eq:ISP11problemP1} with right-hand side $\bm{f}.$  The corresponding method of successive approximations reads in this case as 
\begin{equation}
    \label{eq:isp12successiveapproxf}
    \bm{f}_k = \bm{f}_{k-1} - \alpha_1 N_T\left(N_T(\bm{f}_{k-1}) - \bm{X}_T\right), \quad \bm{f}_{0} \in \Lp{2},
\end{equation}
with relaxation parameter $\alpha_1>0$ and the corresponding algorithm becomes:
\begin{enumerate}
    \item Let $(\bm{u}_\ast, \theta_\ast)$ be the  solution of \eqref{eq:ISP11problemP2} and determine $\bm{X}_T = \bm{\chi}_T - \bm{\chi}_{T,\ast}.$

    \item Make a guess $\bm{f}_0 \in\Lp{2},$ and solve \eqref{eq:ISP11problemP1}, denote its soltution with  $(\widetilde{\bm{u}}_0, \widetilde{\theta}_0) = (\bm{u}(\cdot,\cdot;\bm{f}_0), \theta(\cdot,\cdot; \bm{f}_0)).$

    \item Assume that $\bm{f}_k$ and $(\widetilde{\bm{u}}_k, \widetilde{\theta}_k)$ have been constructed. Let
    \begin{equation*} \bm{F}_k = \int_0^T \widetilde{\bm{u}}(\cdot,t)\dt  - \bm{X}_T\end{equation*} and solve \eqref{eq:ISP11problemP1} with the choice $\bm{f} = \bm{F}_k$ in the load source, let $(\bm{w}_k, \eta_k) = (\bm{u}(\cdot, \cdot; \bm{F}_k), \theta(\cdot, \cdot; \bm{F}_k)$ be the solution.

    \item Update $\bm{f}_k$ via the rule
    $$
    \bm{f}_{k+1} = \bm{f}_k -  \alpha_1 \int_0^T \bm{w}_k(\cdot, t)\dt,
    $$
    and solve \eqref{eq:ISP11problemP1} using $\bm{f}_{k+1}$ to find the new solution pair $(\widetilde{\bm{u}}_{k+1}, \widetilde{\theta}_{k+1}).$

    \item Repeat the above steps until a stopping criterion is met at step $K.$ The triple $(\bm{u}_\ast + \widetilde{\bm{u}}_K, \theta_\ast + \widetilde{\theta}_K, \bm{f}_K)$ is the approximation to the solution of the inverse problem \textbf{ISP1.2}. 
    
\end{enumerate}
We record the following convergence result.
\begin{theorem}\label{thm:convergenceLandweberISP12}
    Assume that the conditions of \Cref{lem:forward_problem}(i) are fulfilled and $g\in \mathcal{C}_0.$ Assume that  $0 < \alpha_1 < \left\|N_T\right\|^{-2}_{{\mathcal L}(\Lp{2},\Lp{2})}.$ Let $(\bm{u},\theta, \bm{f}) = (\bm{u}_\ast + \bm{v}, \theta_\ast + \zeta, \bm{f})$ be the unique solution to the inverse problem \textbf{ISP1.2}, where $(\bm{u}_\ast,\theta_\ast)$ solves \eqref{eq:ISP11problemP2} and $(\bm{v}, \zeta, \bm{f})$ solves \eqref{eq:ISP11problemP1} under the condition $\int_0^T \bm{v}(\cdot, t) \dt = \bm{X}_T.$ Let $(\bm{v}_k, \theta_k, \bm{f}_k)$ be the $k$-th approximation according to the iterative procedure described above. Then it holds that
    \begin{equation}\label{ip1.2:stopping}
    \lim\limits_{k\to\infty} \left( \nrm{\bm{v}-\bm{v}_k}_{\cIX{\Lp{2}}} + \nrm{\int_0^{t} \left(\zeta(\cdot,s) - \zeta_k(\cdot,s)\right)\ds}_{\cIX{\hko{1}}} \right) = 0 
    \end{equation}
    for every choice of initial guess $\bm{f}_0 \in \Lp{2}.$ If the conditions of \Cref{lem:forward_problem}(ii) are met, then it holds that the limit transitions \eqref{eq:convergencevkzetak}-\eqref{eq:convergencevtk} are satisfied. 
\end{theorem}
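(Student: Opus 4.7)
The plan is to mirror the argument of Theorem \ref{thm:convergenceLandweberISP11} with $N_T$ replacing $M_T$, using the weaker a priori estimate \eqref{eq:estimate under (i)} for the first assertion and the stronger one \eqref{eq:estimate under (ii)} for the second. First I would verify that $N_T\colon \Lp{2}\to\Lp{2}$ is a compact linear operator by essentially the same argument as in \Cref{lem:MTcompactlinearL2L2}: linearity is immediate since time-integration commutes with the linear solution map $\bm{f}\mapsto \widetilde{\bm{u}}(\cdot,\cdot;\bm{f})$, and the estimate \eqref{eq:estimate under (i)} applied to \eqref{eq:ISP11problemP1} with source $g(t)\bm{f}(\X)$ yields
\[
\nrm{\int_0^T \widetilde{\bm{u}}(\cdot,t;\bm{f})\dt}_{\Hko{1}} \leq C \nrm{g}_{\Cont(\Iclosed)} \nrm{\bm{f}}_{\Lp{2}},
\]
so $N_T$ maps bounded sets in $\Lp{2}$ into bounded sets in $\Hko{1}$, whence compactness into $\Lp{2}$ follows from Rellich--Kondrachov.

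Next, using the identity $N_T(\bm{f})=\bm{X}_T$ built into the construction of $\bm{X}_T$, linearity rewrites \eqref{eq:isp12successiveapproxf} as $\bm{f}_k - \bm{f} = (I - \alpha_1 N_T^2)(\bm{f}_{k-1}-\bm{f})$, where $I$ denotes the identity on $\Lp{2}$. Under the hypothesis $0<\alpha_1<\nrm{N_T}^{-2}_{\mathcal{L}(\Lp{2},\Lp{2})}$, a direct application of \cite[Theorem~3]{slodicka2010}, exactly as in the proof of Theorem \ref{thm:convergenceLandweberISP11}, gives $\bm{f}_k \to \bm{f}$ in $\Lp{2}$ for any initial guess $\bm{f}_0\in \Lp{2}$.

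To pass this convergence to the state variables under the hypotheses of \Cref{lem:forward_problem}(i), one observes that $(\bm{v}_k - \bm{v}, \zeta_k - \zeta)$ solves the subproblem \eqref{eq:ISP11problemP1} with source $g(t)(\bm{f}_k-\bm{f})$ and zero initial data. Applying \eqref{eq:estimate under (i)} to this difference yields
\[
\nrm{\bm{v}_k - \bm{v}}^2_{\cIX{\Lp{2}}} + \nrm{\int_0^t (\zeta_k(\cdot,s)-\zeta(\cdot,s))\ds}^2_{\cIX{\hko{1}}} \leq C \nrm{g}^2_{\Cont(\Iclosed)} \nrm{\bm{f}_k-\bm{f}}^2_{\Lp{2}},
\]
and the right-hand side vanishes as $k\to\infty$, which is exactly \eqref{ip1.2:stopping}. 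For the second assertion, under the stronger hypotheses of \Cref{lem:forward_problem}(ii) the same difference $(\bm{v}_k-\bm{v}, \zeta_k-\zeta)$ has source $g(t)(\bm{f}_k-\bm{f})\in \HHi^1(\Iopen,\Lp{2})$ (this is where the implicit requirement $g\in \Hi^1(\Iopen)$ enters), zero initial data, and trivially verifies the PDE compatibility at $t=0$; the estimate \eqref{eq:estimate under (ii)} then upgrades the control to the norms appearing in \eqref{eq:convergencevkzetak}-\eqref{eq:convergencevtk}, so the $\Lp{2}$ convergence $\bm{f}_k\to \bm{f}$ from the previous step already supplies these stronger limits.

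The main technical point is bookkeeping: one must isolate the right version of the a priori estimate for each regularity level and verify that the subproblem for the difference $(\bm{v}_k-\bm{v}, \zeta_k-\zeta)$ fits the corresponding hypothesis, in particular the implicit requirement $g\in\Hi^1(\Iopen)$ in case (ii). Once this is in place, the argument is a direct transcription of the proof of Theorem \ref{thm:convergenceLandweberISP11}, which is why we only sketch it rather than writing it out in full.
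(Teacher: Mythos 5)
Your proposal is correct and takes essentially the same route as the paper: the paper's own proof is just the one-line remark that the argument of \Cref{thm:convergenceLandweberISP11} carries over with $N_T$ in place of $M_T$ and the a priori estimate \eqref{eq:estimate under (i)} in place of \eqref{eq:estimate under (ii)}, which is precisely what you have written out. Your side observation that the second assertion implicitly requires $g\in \Hi^1\left((0,T)\right)$ so that the difference problem with source $g(t)(\bm{f}_k-\bm{f})$ actually meets the hypotheses of \Cref{lem:forward_problem}(ii) is a fair point that the paper leaves tacit.
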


The proof is analogous to that of \Cref{thm:convergenceLandweberISP11}, now using \eqref{eq:estimate under (i)}.
As a stopping criterion, we stop the iterations at the first value of $k$ where
\begin{equation}
\label{eq:stoppingisp12}
E_{k,\bm{X}_T} := \nrm{ \bm{X}_T^e - \int_0^T \widetilde{\bm{u}}_k^e(\cdot,t)\dt} \leq re, \quad r>0,
\end{equation} 
in case of noisy data with noise level $e\geq 0.$ Here $\bm{X}_T^e$ is the remainder of the noisy measurement $\bm{\chi}_T^e$ with $\nrm{\bm{\chi}_T - \bm{\chi}_T^e} \leq e.$ 

\subsection{Adaptations for \textbf{ISP2}}

In a similar fashion, we can adapt to the situation in \textbf{ISP2} as follows. The subdivision in the two subproblems 
\begin{equation}
    \label{eq:ISP2problemP1}
    \tag{${P}_1^2$}
    \left\{
    \begin{array}{rll}
            \rho \partial_{tt} \bm{u} - \nabla \cdot \sigma(\bm{u}) + \gamma \nabla \theta &= 0  & \quad \text{ in } Q_T \\
        %\rho \partial_{tt} \bm{u} - \mu\Delta \bm{u}  - (\lambda + \mu) \nabla (\nabla \cdot \bm{u}) + \gamma \nabla \theta &= 0  & \quad \text{ in } Q_T \\
        \rho C_s \partial_t \theta - \kappa \Delta \theta - k\ast \Delta \theta + T_0 \gamma \nabla \cdot \partial_t \bm{u} &= g(t)f(\X)&  \quad  \text{ in } Q_T \\
        \bm{u}(\X,t) = \bm{0}, \quad \theta(\X,t) &=0 & \quad  \text{ in } \Sigma_T \\
        \bm{u}(\X,0) = \bm{0}, \quad \partial_t \bm{u}(\X,0) =\bm{0}, \quad \theta(\X,0) &= 0 & \quad \text{ in } \Omega,
    \end{array} 
    \right.
\end{equation}
and 
\begin{equation}
    \label{eq:ISP2problemP2}
    \tag{${P}_2^2$}
    \left\{
    \begin{array}{rll}
     \rho \partial_{tt} \bm{u} -  \nabla \cdot \sigma(\bm{u}) + \gamma \nabla \theta &= \bm{p}  & \quad \text{ in } Q_T \\ 
        %\rho \partial_{tt} \bm{u} - \mu\Delta \bm{u}  - (\lambda + \mu) \nabla (\nabla \cdot \bm{u}) + \gamma \nabla \theta &= \bm{p} & \quad  \text{ in } Q_T \\
        \rho C_s \partial_t \theta - \kappa \Delta \theta - k\ast \Delta \theta + T_0 \gamma \nabla \cdot \partial_t \bm{u} &=  s  & \quad  \text{ in } Q_T \\
        \bm{u}(\X,t) = \bm{0}, \, \theta(\X,t) &=0 & \quad  \text{ in } \Sigma_T \\
        \bm{u}(\X,0) = \overline{\bm{u}}_0(\X), \, \partial_t \bm{u}(\X,0)=  \overline{\bm{u}}_1(\X), \quad \theta(\X,0) &= \overline{\theta}_0(\X) & \quad \text { in } \Omega
    \end{array} 
    \right.
\end{equation}
is now needed. Here \eqref{eq:ISP2problemP2} can be solved based on the available data as a direct problem, and \eqref{eq:ISP2problemP1} is used in the iterative scheme. The associated measurement operator is now given by
\begin{equation} 
\label{eq:ISP2measurementmapP}
P_T \colon \lp{2} \to  \lp{2} \colon f \mapsto \int_0^T \widetilde{\theta}(\cdot,t;f)\dt,
\end{equation} 
where $(\widetilde{\bm{u}}(\cdot,\cdot;f), \widetilde{\theta}(\cdot,\cdot;f))$ is the solution to \eqref{eq:ISP2problemP1} with right-hand side $f.$ The approximations 
$$
f_k = f_{k-1} - \alpha_2 P_T\left(P_T(f_{k-1}) - \Psi_T\right), \quad f_0 \in \lp{2},
$$
with $\alpha_2>0$ as relaxation parameter are now used. Here we denote with $\Psi_T$ the remainder of the measurement $\psi_T$, taking into account the part of which is captured by  the solution $(\bm{u}_\ast, \theta_\ast)$ of \eqref{eq:ISP2problemP2}, i.e.
$$
\psi_{T,\ast}(\X) = \int_0^T \theta_\ast(\X,t)\dt, \quad \Psi_T(\X) = \psi_T(\X) - \psi_{T,\ast}(\X).  
$$
The appropriate stopping criterion is now given by considering
$$
E_{k, \Psi_T} = \nrm{ \Psi_T^e - \int_0^T \widetilde{\theta}_k^e(\cdot, t)\dt} \leq re, \quad r>0,
$$
for the noisy measurement $\psi_T^e  \in \lp{2},$ with $\nrm{\psi_T - \psi_{T}^e}\leq e$ and its remainder $\Psi_T^e = \psi_T^e - \psi_{T,\ast}.$ 
The associated algorithm now proceeds as:

\begin{enumerate}
    \item Let $(\bm{u}_\ast, \theta_\ast)$ be the  solution of \eqref{eq:ISP2problemP2} and determine $\Psi_T = \psi_T - \psi_{T,\ast}.$

    \item Make a guess $f_0 \in\lp{2},$ and solve \eqref{eq:ISP2problemP1}, denote with $(\widetilde{\bm{u}}_0, \widetilde{\theta}_0) = (\bm{u}(\cdot,\cdot;f_0), \theta(\cdot,\cdot; f_0))$ its solution.

    \item Assume that $\bm{f}_k$ and $(\widetilde{\bm{u}}_k, \widetilde{\theta}_k)$ have been constructed. Let \begin{equation*} F_k = \int_0^T \widetilde{\theta}(\cdot,t)\dt  - \Psi_T\end{equation*} and solve \eqref{eq:ISP2problemP1} with the choice $f = F_k$ in the heat source, let $(\bm{w}_k, \eta_k) := (\bm{u}(\cdot, \cdot; F_k), \theta(\cdot, \cdot; F_k)$ be the solution.

    \item Update $f_k$ via the rule
    $$
    f_{k+1} = f_k -  \alpha_2\int_0^T \eta_k(\cdot, t)\dt,
    $$
    and solve \eqref{eq:ISP2problemP1} using $f_{k+1}$ to find the new solution pair $(\widetilde{\bm{u}}_{k+1}, \widetilde{\theta}_{k+1}).$

    \item Repeat the above steps until a stopping criterion is met in step $K.$. The triple $(\bm{u}_\ast + \widetilde{\bm{u}}_K, \theta_\ast + \widetilde{\theta}_K, f_K)$ is the approximate solution to the inverse problem \textbf{ISP2}. 
\end{enumerate}
For completeness, we list the following convergence theorem, its proof is an adaptation of that of \Cref{thm:convergenceLandweberISP11}.
\begin{theorem}\label{thm:convergenceLandweberISP2}
Assume that the conditions of \Cref{lem:forward_problem}(ii) are satisfied and $g \in \mathcal{C}_{1,\ast}.$ Assume that $0 < \alpha_2  < \nrm{P_T}^{-2}_{\mathcal{L} (\lp{2},\lp{2})}.$ Let $(\bm{u},\theta, f) = (\bm{u} + \bm{v}, \theta_\ast + \zeta, f)$ be the unique solution to the inverse problem \textbf{ISP2}, where $(\bm{u}_\ast, \theta_\ast)$ solves \eqref{eq:ISP2problemP2} and $(\bm{v},\zeta, f)$ solves \eqref{eq:ISP2problemP1} under the condition $\int_0^T \zeta(\cdot, t)\dt = \Psi_T.$ Let $(\bm{v}_k, \zeta_k, f_k)$ be the $k$-th approximation according to the iterative procedure described above. Then it holds that
\begin{equation*}
        \lim\limits_{k\to \infty}\left( \nrm{\bm{v} - \bm{v}_k}_{\cIX{\Hko{1}}} + \nrm{\zeta-\zeta_k}_{\cIX{\hko{1}}}  \right)  = 0
    \end{equation*}
and 
\begin{equation*}
    \lim\limits_{k\to \infty} \left(\nrm{\partial_t \bm{v} - \partial_t \bm{v}_k}_{\lpkIX{2}{\Lp{2}}}  \right) = 0
\end{equation*}
for every choice of initial guess $f_0 \in \lp{2}.$ 
\end{theorem}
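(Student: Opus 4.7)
The plan is to mirror the strategy used for \Cref{thm:convergenceLandweberISP11}, with the displacement-based operator $M_T$ replaced by the temperature-based operator $P_T$ from \eqref{eq:ISP2measurementmapP}. The argument splits naturally into three steps: (i) establish that $P_T \colon \lp{2} \to \lp{2}$ is a compact linear operator; (ii) use this, together with the linearity of the iteration and the Landweber convergence result in \cite[Theorem~3]{slodicka2010}, to conclude $f_k \to f$ in $\lp{2}$; and (iii) propagate this source convergence to the required norms for $\bm{v}_k$ and $\zeta_k$ through the a priori estimate \eqref{eq:estimate under (ii)}.

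For the first step I would repeat the proof of \Cref{lem:MTcompactlinearL2L2} with only formal modifications: linearity of $P_T$ follows from the linearity of \eqref{eq:ISP2problemP1} in the right-hand side $g(t)f$ and the linearity of time integration. Compactness follows because, under the assumptions of \Cref{lem:forward_problem}(ii), the estimate \eqref{eq:estimate under (ii)} applied to \eqref{eq:ISP2problemP1} (with zero mechanical source and trivial initial data) gives $\widetilde{\theta}(\cdot,\cdot;f) \in \cIX{\hko{1}}$ with norm controlled by $\nrm{f}$, hence $P_T$ maps $\lp{2}$ boundedly into $\hko{1}$; the Rellich--Kondrachov compact embedding $\hko{1} \hookrightarrow \lp{2}$ then makes $P_T$ compact as a self-map of $\lp{2}$. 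A small subtlety is checking that the source $g(t)f$ meets the regularity requirements of \Cref{lem:forward_problem}(ii), but this is guaranteed by $g \in \mathcal{C}_{1,\ast} \subset \Cont^1([0,T])$ and $f \in \lp{2}$, together with the vanishing initial data in \eqref{eq:ISP2problemP1}.

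With compactness and linearity in hand, the identity $P_T(f) = \Psi_T$ combined with linearity yields $f_k - f = (I - \alpha_2 P_T^2)(f_{k-1} - f)$, where $I$ denotes the identity on $\lp{2}$, and the hypothesis $0 < \alpha_2 < \nrm{P_T}_{\mathcal{L}(\lp{2},\lp{2})}^{-2}$ places the situation exactly in the scope of \cite[Theorem~3]{slodicka2010}, giving $f_k \to f$ in $\lp{2}$ for arbitrary initial guess $f_0 \in \lp{2}$; uniqueness of the limit is supplied by \Cref{thm:isp2}. To finish, I would apply \eqref{eq:estimate under (ii)} to the difference pair $(\bm{v}-\bm{v}_k,\zeta-\zeta_k)$, which solves \eqref{eq:ISP2problemP1} with source $g(t)(f - f_k)$ and zero initial and boundary data. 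Since
\[
\nrm{g(\cdot)(f - f_k)}^2_{\lpkIX{2}{\lp{2}}} \leq T\,\nrm{g}^2_{\Cont([0,T])}\,\nrm{f - f_k}^2 \longrightarrow 0,
\]
the a priori estimate delivers both stated limits simultaneously. The main obstacle I expect is the careful justification of the compactness of $P_T$, which hinges on the improved regularity $\widetilde{\theta} \in \cIX{\hko{1}}$ that is only available under the stronger hypotheses of \Cref{lem:forward_problem}(ii); this is precisely why those hypotheses appear in the statement of the theorem.
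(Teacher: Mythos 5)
Your proposal matches the paper's approach exactly: the paper explicitly states that the proof of this theorem is an adaptation of that of \Cref{thm:convergenceLandweberISP11}, and your adaptation (compactness and linearity of $P_T$ via the argument of \Cref{lem:MTcompactlinearL2L2}, the identity $f_k - f = (I-\alpha_2 P_T^2)(f_{k-1}-f)$ combined with \cite[Theorem~3]{slodicka2010}, and propagation to $(\bm{v}_k,\zeta_k)$ through \eqref{eq:estimate under (ii)}) is precisely the intended one. Your added remarks on why \Cref{lem:forward_problem}(ii) is needed for $g(t)f$ with $g\in\mathcal{C}_{1,\ast}$ are a correct and slightly more careful filling-in of details the paper leaves implicit.
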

%%%%%%%%%%%%%%%%%%%%%%%%%%%%%%%%%%%%%%%%%%%%%%%%%%%%%%%%%%%%%%%%%%%%%%%%%%%%%%%%%%%%%%%%%%%%%%%%%%%%%%%%%%%%%%
%%%%%%%%%%%%%%%%%%%%%%%%%%%%%%%%%%%%%%%%%%%%%%%%%%%%%%%%%%%%%%%%%%%%%%%%%%%%%%%%%%%%%%%%%%%%%%%%%%%%%%%%%%%%%%
\section{Minimisation methods} \label{sec:gradientmethods}
In this section, we rewrite the inverse problems as minimisation problems of certain objection functionals, for which  gradient methods can be applied.  The setup and procedure are first described for \textbf{ISP1.1} and afterwards adapted for \textbf{ISP1.2} and \textbf{ISP2}. By linearity, we consider only \eqref{eq:ISP11problemP1} and \eqref{eq:ISP2problemP1} as problems \eqref{eq:ISP11problemP2} and \eqref{eq:ISP2problemP2} are independent of the sought sources and are part of the direct method.

\subsection{Setup} \label{subsec:gradientsetup}
The objective functional for the inverse problem \textbf{ISP1.1} with final-in-time time measurement $\bm{\xi}_T \in \Lp{2}$ with regard to $(\bm{u}(\cdot,\cdot;\bm{f}),\theta(\cdot,\cdot;\bm{f}))$ as the solution to \eqref{eq:ISP11problemP1} in the noise free setting, is given by
\begin{equation}
    \label{eq:JT11}
    \mathcal{J}_\beta(\bm{f}) = \frac{1}{2} \nrm{\bm{u}(\cdot,T;\bm{f}) - \bm{\Xi}_T(\cdot)}^2 + \frac{\beta}{2} \nrm{\bm{f}}^2.
\end{equation}
Here $\beta\geq 0$ is the regularisation parameter in the above Tikhonov functional, which compromises between data fidelity in the first term (which indicates how the solution fits the measurement) and the size of the solution in the second term (which gives a penalty for solutions with large norm). Using the measurement operator \eqref{eq:ISP11measurementmapM}, we can write 
\begin{equation}
    \label{eq:JT112}
\mathcal{J}_\beta(\bm{f}) = \frac{1}{2} \nrm{M_T(\bm{f}) - \bm{\Xi}_T}^2 + \frac{\beta}{2} \nrm{\bm{f}}^2.
\end{equation}
\begin{remark}  \label{rem:datafitandpenalty}
In case $\beta=0$, the minimisation of \eqref{eq:JT11} corresponds to the minimisation of the residual error. If $\beta>0,$ then the optimal value of $\beta$ can be determined by, e.g., the $L$-curve criterion \cite{Engl1996}. A more general version of a cost functional  is given by $\frac{1}{2}\nrm{M_T(\bm{f})-\bm{\Xi}_T}^2 + \frac{\beta}{2}\mathcal{R}(\bm{f})$, where $\mathcal{R}(\bm{f})$ is some penalty term, e.g. $\mathcal{R}(\bm{f})=\nrm{\bm{f}-\bm{g}}^2$ for some $\bm{g}\in\Lp{2}.$ Larger values of $\beta$ favor solutions $\bm{f}$ with small deviation from a given $\bm{g}$ at the cost of possible worse datafit, i.e.\ large $\nrm{M_T(\bm{f}) - \bm{\Xi}_T},$ whereas small values of $\beta$ allow for a better data fit, possibly with a larger penalty.
\end{remark} 
The main goal is to find a minimiser of \eqref{eq:JT11}. The existence and uniqueness of a solution in the absence and presence of noise are discussed in the following two theorems, together with their relation as the noise level $e$ tends to zero.

\begin{theorem}\label{thm:JT11uniqemin}
The minimisation problem of finding
$$
\argmin\limits_{\bm{f}\in\Lp{2}}\mathcal{J}_\beta(\bm{f}), \quad \beta >0
$$
has a unique solution in $\Lp{2}.$ In case $\beta=0,$  a unique solution exists on each nonempty 
bounded closed convex subset $\mathcal{A}\subset \Lp{2}.$
\end{theorem}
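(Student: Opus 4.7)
The plan is to treat the two cases separately via the direct method of the calculus of variations in the Hilbert space $\Lp{2}$, exploiting \Cref{lem:MTcompactlinearL2L2}.

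For the case $\beta>0$, I would first observe that $\mathcal{J}_\beta$ is (i) continuous on $\Lp{2}$, because $M_T$ is bounded, (ii) convex, since the first term is the composition of the convex map $\bm{y}\mapsto\tfrac12\|\bm{y}-\bm{\Xi}_T\|^2$ with the affine map $M_T$, and (iii) strictly convex, because the penalty $\tfrac{\beta}{2}\|\bm{f}\|^2$ is strictly convex. Coercivity is immediate from $\mathcal{J}_\beta(\bm{f})\ge\tfrac{\beta}{2}\|\bm{f}\|^2$. Then, for any minimising sequence $(\bm{f}_n)\subset\Lp{2}$, coercivity yields boundedness, reflexivity of $\Lp{2}$ yields a weakly convergent subsequence $\bm{f}_{n_k}\weakto\bm{f}^\star$, and weak lower semicontinuity (which follows from convexity plus continuity) gives $\mathcal{J}_\beta(\bm{f}^\star)\le\liminf_k\mathcal{J}_\beta(\bm{f}_{n_k})=\inf_{\Lp{2}}\mathcal{J}_\beta$. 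Strict convexity then forces uniqueness of the minimiser.

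For the case $\beta=0$ on a nonempty bounded closed convex subset $\mathcal{A}\subset\Lp{2}$, coercivity is lost, but the constraint $\bm{f}\in\mathcal{A}$ replaces it. I would repeat the direct method on $\mathcal{A}$: a minimising sequence stays in $\mathcal{A}$ hence is bounded; by reflexivity it has a weakly convergent subsequence with limit in $\mathcal{A}$ (closed and convex implies weakly closed by Mazur's theorem); weak lower semicontinuity of $\mathcal{J}_0$ again follows from convexity and continuity, giving a minimiser. For uniqueness, I would invoke injectivity of $M_T$: the uniqueness statement in \Cref{thm:isp11} (applied to the affine decomposition introduced in \Cref{subsec:landweber:discription}, which reduces \textbf{ISP1.1} to the equation $M_T(\bm{f})=\bm{\Xi}_T$ for the unknown part) implies that $M_T$ is injective on $\Lp{2}$. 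Consequently, for $\bm{f}_1\neq\bm{f}_2$ in $\mathcal{A}$ and $\lambda\in(0,1)$, the strict convexity of $\bm{y}\mapsto\tfrac12\|\bm{y}-\bm{\Xi}_T\|^2$ combined with $M_T(\bm{f}_1)\neq M_T(\bm{f}_2)$ yields strict convexity of $\mathcal{J}_0$ on $\mathcal{A}$, hence a unique minimiser.

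The main obstacle is the uniqueness statement when $\beta=0$: without the strictly convex Tikhonov penalty, one must promote the mere convexity of the data-fidelity term to strict convexity, and the only way to do this is to invoke the injectivity of $M_T$ coming from \Cref{thm:isp11}. Once the link between the uniqueness theorem for \textbf{ISP1.1} and injectivity of the measurement operator is made explicit, both existence and uniqueness on $\mathcal{A}$ follow by a standard application of Mazur's theorem together with weak sequential compactness of bounded sets in the reflexive Hilbert space $\Lp{2}$.
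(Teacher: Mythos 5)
Your proposal is correct and follows essentially the same route as the paper: continuity of $\mathcal{J}_\beta$ from the boundedness of $M_T$ (\Cref{lem:MTcompactlinearL2L2}), strict convexity plus coercivity for $\beta>0$, and restriction to a bounded closed convex set when $\beta=0$; the paper simply packages the direct method you spell out by citing Zeidler's theorems on convex minimum problems. The one genuine difference is in the uniqueness for $\beta=0$: the paper asserts without comment that $\mathcal{J}_0$ ``remains strictly convex'' on $\mathcal{A}$, whereas strict convexity of the pure data-fidelity term $\bm{f}\mapsto\tfrac12\nrm{M_T(\bm{f})-\bm{\Xi}_T}^2$ actually requires $M_T(\bm{f}_1)\neq M_T(\bm{f}_2)$ for distinct arguments, i.e.\ injectivity of $M_T$. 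You supply exactly this missing justification by deducing injectivity from the uniqueness result \Cref{thm:isp11} (via linearity, $M_T(\bm{f}_1-\bm{f}_2)=\bm{0}$ forces $\bm{f}_1=\bm{f}_2$), which makes your argument slightly more complete than the paper's at the only delicate point; just note that this step inherits the standing hypotheses of that theorem ($g\in\mathcal{C}_1$ and \Cref{lem:forward_problem}(ii)), which are indeed in force throughout the discussion of \textbf{ISP1.1}.
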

\begin{proof}
We start by proving that the functional $\mathcal{J}_\beta$ is strictly convex,  continuous and weakly coercive if $\beta>0.$
    \Cref{lem:MTcompactlinearL2L2} shows the continuity of $M_T$ and hence that of $\mathcal{J}_\beta$ by composition of continuous maps, for any $\beta \geq 0.$ The convexity follows by the linearity of $M_T$ and  the strict convexity of $\nrm{\cdot}^2$ as for distinct $\bm{f}_1, \bm{f}_2 \in \Lp{2}$ and $a \in(0,1),$ we find 
    \begin{align*}
    &\mathcal{J}_\beta(a\bm{f}_1 + (1-a) \bm{f}_2) \\
    &= \frac{1}{2} \nrm{M_T(a\bm{f}_1 + (1-a)\bm{f}_2)- \bm{\Xi}_T}^2  + \frac{\beta}{2} \nrm{ a\bm{f}_1+(1-a) \bm{f}_2}^2 \\
& \leq  \frac{1}{2}\nrm{a(M_T(\bm{f}_1)-\bm{\Xi}_T) + (1-a)(M_T(\bm{f}_2)-\bm{\Xi}_T)}^2 + \frac{\beta}{2} \nrm{a\bm{f}_1 + (1-a)\bm{f}_2}^2 \\
 &  < a\mathcal{J}_\beta(\bm{f}_1) + (1-a) \mathcal{J}_\beta(\bm{f}_2). 
    \end{align*} 
From \eqref{eq:JT11} it follows that $\mathcal{J}_\beta(\bm{f}) \to \infty$ as $\nrm{\bm{f}}\to \infty$ if $\beta >0,$ hence $\mathcal{J}_\beta$ is weakly coercive. The main theorem on convex minimum problems \cite[Theorem~25.E]{Zeidler-IIB} gives the existence and uniqueness of the minimal point of \eqref{eq:JT11}. 
If $\beta=0,$ then the functional $\mathcal{J}_0$ remains strictly convex and continuous on bounded closed convex sets $\mathcal{A}$ and is therefore weakly sequentially lower semicontinuous on $\mathcal{A}.$ The existence and uniqueness of a minimal point then follows from \cite[Theorem~25.C - Corollary~25.15]{Zeidler-IIB}.
\end{proof}

In case of noise with noise level $e$, the measurement $\bm{\Xi}_T$ is perturbed to $\bm{\Xi}_T^e$  with the property that $\bm{\Xi}_T^e \to \bm{\Xi}_T$ in $\Lp{2}$ as $e \to 0$ by \eqref{eq:perturbedChiTnorm}. Therefore, instead of \eqref{eq:JT11}, the functional $\mathcal{J}_\beta^e\colon\Lp{2} \to \mathbb{R}$ for $ \beta\geq 0$ defined by
\begin{equation}
    \label{eq:JTe11}
    \mathcal{J}_\beta^e(\bm{f}) = \frac{1}{2} \nrm{M_T(\bm{f}) - \bm{\Xi}_T^e}^2 + \frac{\beta}{2} \nrm{\bm{f}}^2
\end{equation}
and its corresponding minimisation problem should be considered for noisy data. 
\begin{theorem}\label{thm:JTe11uniquemin}
    The minimisation problem of finding
    $$
    \argmin\limits_{\bm{f}\in\Lp{2}} \mathcal{J}_\beta^e(\bm{f}), \quad \beta>0,\; e\geq 0,
    $$
    has a unique solution  in $\Lp{2}$. In case $\beta=0,$ a unique solution exists on each nonempty bounded closed convex $\mathcal{A}\subset \Lp{2}.$ 
     Moreover, for any $\beta \geq 0,$ let $\{e_n\}_n\subseteq{\mathbb{R}_{\geq0}}$ be a sequence with $e_n \searrow 0$ as $n\to \infty,$ and let $\{\bm{\Xi}_T^{e_n}\}_n$ be the corresponding sequence of noisy measurements in $\Lp{2}$ such that $\bm{\Xi}_T^{e_n}\to \bm{\Xi}_T$ in $\Lp{2}$ as $n\to \infty.$ If the associated sequence $\{\bm{f}_{e_n}\}_n$ in $\Lp{2}$ of minimisers of $\mathcal{J}_\beta^{e_n}$ is uniformly bounded, then $\{\bm{f}_{e_n}\}_n$ converges weakly to a minimiser of $\mathcal{J}_\beta.$ 
\end{theorem}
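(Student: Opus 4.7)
The first two assertions follow from the proof of \Cref{thm:JT11uniqemin} with no modification other than replacing $\bm{\Xi}_T$ by $\bm{\Xi}_T^e$: the functional $\mathcal{J}_\beta^e$ is continuous and strictly convex on $\Lp{2}$ for every $e\geq 0$, is weakly coercive when $\beta>0$, and otherwise the same convex-minimum theorem from \cite{Zeidler-IIB} applies on a bounded closed convex subset $\mathcal{A}$. For the stability claim, my plan is to adapt the classical Tikhonov stability argument: since $\{\bm{f}_{e_n}\}$ is bounded in the Hilbert space $\Lp{2}$, a subsequence (not relabelled) satisfies $\bm{f}_{e_n}\weakto \bm{f}^\star$ for some $\bm{f}^\star\in\Lp{2}$, and the task is to show that $\bm{f}^\star$ minimises $\mathcal{J}_\beta$.

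To identify the weak limit, I would combine three ingredients. First, the weak sequential lower semicontinuity of $\mathcal{J}_\beta$, which is immediate from the convexity and continuity established in \Cref{thm:JT11uniqemin}. Second, the elementary data-perturbation estimate
\[
\bigl|\mathcal{J}_\beta^e(\bm{f}) - \mathcal{J}_\beta(\bm{f})\bigr| \leq \tfrac{1}{2}\bigl(2\nrm{M_T(\bm{f})}+\nrm{\bm{\Xi}_T}+\nrm{\bm{\Xi}_T^e}\bigr)\nrm{\bm{\Xi}_T-\bm{\Xi}_T^e},
\]
obtained by expanding the quadratic data-fidelity terms; on a uniformly bounded set of arguments the right-hand side is controlled by a constant multiple of $\nrm{\bm{\Xi}_T - \bm{\Xi}_T^e}\to 0$, where the constant involves $\nrm{M_T}_{\mathcal{L}(\Lp{2},\Lp{2})}$ from \Cref{lem:MTcompactlinearL2L2}. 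Third, the optimality inequality $\mathcal{J}_\beta^{e_n}(\bm{f}_{e_n}) \leq \mathcal{J}_\beta^{e_n}(\bm{g})$ valid for any competitor $\bm{g}\in\Lp{2}$. Assembling these, I would pass to the limit in the chain
\[
\mathcal{J}_\beta(\bm{f}^\star) \leq \liminf_{n\to\infty}\mathcal{J}_\beta(\bm{f}_{e_n}) = \liminf_{n\to\infty}\mathcal{J}_\beta^{e_n}(\bm{f}_{e_n}) \leq \liminf_{n\to\infty}\mathcal{J}_\beta^{e_n}(\bm{g}) = \mathcal{J}_\beta(\bm{g}),
\]
and conclude, since $\bm{g}$ is arbitrary, that $\bm{f}^\star$ is a minimiser of $\mathcal{J}_\beta$.

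The passage from this subsequential statement to convergence of the full sequence is handled by the standard subsequence principle: each subsequence of $\{\bm{f}_{e_n}\}$ is itself bounded, hence admits a further weakly convergent subsequence whose limit is, by the argument above, again a minimiser of $\mathcal{J}_\beta$; when $\beta>0$ this minimiser is unique by the first part of the theorem, which forces the whole sequence $\bm{f}_{e_n}$ to converge weakly to that common limit (and likewise for $\beta=0$ under the implicit restriction to a bounded closed convex set on which uniqueness holds). I anticipate the main obstacle to be the middle equality in the displayed chain above, namely $\liminf_n \mathcal{J}_\beta(\bm{f}_{e_n}) = \liminf_n \mathcal{J}_\beta^{e_n}(\bm{f}_{e_n})$: this is where the uniform boundedness hypothesis enters in an essential way, since the perturbation estimate must be applied along the moving sequence of arguments $\bm{f}_{e_n}$ rather than at a single fixed test function, and without the uniform bound the right-hand side need not tend to zero along the sequence.
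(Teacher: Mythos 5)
Your proposal is correct, and it reaches the key inequality $\mathcal{J}_\beta(\bm{f}^\star)\le\mathcal{J}_\beta(\bm{g})$ by a genuinely different mechanism than the paper. The paper exploits the \emph{compactness} of $M_T$ (\Cref{lem:MTcompactlinearL2L2}): from $\bm{f}_{e_n}\weakto\bm{f}^\star$ it extracts strong convergence $M_T(\bm{f}_{e_n})\to M_T(\bm{f}^\star)$ in $\Lp{2}$, controls the fidelity term by a triangle inequality inside the norm, and handles only the penalty term $\tfrac{\beta}{2}\nrm{\cdot}^2$ by weak lower semicontinuity. You instead invoke weak sequential lower semicontinuity of the \emph{whole} convex continuous functional $\mathcal{J}_\beta$ and transfer between $\mathcal{J}_\beta^{e_n}$ and $\mathcal{J}_\beta$ along the moving sequence via the quantitative perturbation bound $\abs{\mathcal{J}_\beta^{e}(\bm{f})-\mathcal{J}_\beta(\bm{f})}\lesssim\nrm{\bm{\Xi}_T-\bm{\Xi}_T^{e}}$, whose constant is uniform on the bounded set $\{\bm{f}_{e_n}\}$ -- this is exactly where you correctly locate the role of the uniform-boundedness hypothesis. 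Your route only uses boundedness of $M_T$, not compactness, so it is marginally more general; the paper's route buys strong convergence of the data-fidelity term (a limit rather than a liminf) essentially for free. You are also more careful than the paper on two points it glosses over: you explicitly extract a weakly convergent subsequence rather than asserting weak convergence of the full sequence outright, and you then upgrade to full-sequence convergence via the subsequence principle and uniqueness of the minimiser for $\beta>0$ (correctly flagging that for $\beta=0$ on all of $\Lp{2}$ this upgrade requires the restriction to a set on which uniqueness holds). Both proofs are sound; yours is the more complete write-up of the stability claim.
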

\begin{proof}
Arguing similarly as in \Cref{thm:JT11uniqemin} yields the existence and uniqueness of the minimiser $\bm{f}_{e_n}$ of $\mathcal{J}_\beta^{e_n}$ for all $n\in\NN.$ Since this sequence $\{\bm{f}_{e_n}\}_n$ is assumed to be uniformly bounded, we have that 
    $\bm{f}_{e_n} \weakto \bm{f}^\ast$ in $\Lp{2}$  as $n\to\infty.$ 
    Now, we show that  $\mathcal{J}_\beta(\bm{f}^\ast) \le \mathcal{J}_\beta(\bm{f})$ for all $\bm{f}\in \Lp{2},$ i.e.\ $\bm{f}^\ast$ is a minimiser of $\mathcal{J}_\beta.$  
    By \Cref{lem:MTcompactlinearL2L2},  we have that  $M_T(\bm{f}_{e_n}) =\bm{u}(\cdot, T, \bm{f}_{e_n}) \to M_T(\bm{f}^\ast)  = \bm{u}(\cdot,T,\bm{f}^\ast)$ strongly in $\Lp{2}$ as $n\to \infty.$ Hence, by
    \[
    \nrm{M_T(\bm{f}^\ast) - \bm{\Xi}_T} \le  \nrm{M_T(\bm{f}^\ast) - M_T(\bm{f}_{e_n})} + \nrm{M_T(\bm{f}_{e_n}) - \bm{\Xi}_T^{e_n}} + \nrm{\bm{\Xi}_T^{e_n}- \bm{\Xi}_T}
    \]
    and the lower semicontinuity of the norm, we have that 
    \begin{align*}
    \mathcal{J}_\beta(\bm{f}^\ast) &= \frac{1}{2} \nrm{M_T(\bm{f}^\ast) - \bm{\Xi}_T}^2+ \frac{\beta}{2}\nrm{\bm{f}^\ast}^2 \\
        & \leq \frac{1}{2}\lim\limits_{n\to\infty} \nrm{M_T(\bm{f}_{e_n}) - \bm{\Xi}_T^{e_n}}^2 + \frac{\beta}{2} \liminf\limits_{n\to\infty} \nrm{\bm{f}_{e_n}}^2 \\
        & =  \liminf\limits_{n\to\infty} \mathcal{J}^{e_n}_\beta(\bm{f}_{e_n}).
    \end{align*}
    Moreover, as $\bm{f}_{e_n}$ is the minimiser of $\mathcal{J}^{e_n}_\beta$ for all $n\in\NN,$ we have for all $\bm{f}\in\Lp{2}$ that 
    \[
     \mathcal{J}_\beta(\bm{f}^\ast) \le   \liminf\limits_{n\to\infty} \mathcal{J}^{e_n}_\beta(\bm{f}) = \mathcal{J}_\beta(\bm{f}). \qedhere
    \] 
\end{proof}

\subsection{Sensitivity and adjoint problems}
Alluding to the sequel, we will need to determine the gradient of the functional in \eqref{eq:JT11} for which we will compute the G\^{a}teaux differential. These computations rely on suitably formed sensitivity and adjoint problems, which are introduced here. As before, we denote with $(\bm{u}(\cdot; \bm{f}), \theta(\cdot;\bm{f}))$ the solution to \eqref{eq:ISP11problemP1} with given right-hand side source $\bm{f}.$  For $\delta_{\bm{f}} \in \Lp{2},$ we denote the variations of the solution to \eqref{eq:ISP11problemP1} by $ \delta\bm{u}(\cdot;\bm{f},\delta_{\bm{f}})$ and $\delta \theta(\cdot; \bm{f}, \delta_{\bm{f}}).$ These variations are defined as 
\begin{align}
    \label{eq:variationudelta}
    \delta\bm{u}(\cdot;\bm{f},\delta_{\bm{f}}) &= \lim\limits_{\varepsilon\to 0} \frac{1}{\varepsilon}\left(\bm{u}(\cdot;\bm{f}+\varepsilon\delta_{\bm{f}}) - \bm{u}(\cdot;\bm{f})\right)  \in\cIX{\Hko{1}},\\
    \label{eq:variationthetadelta}
    \delta\theta(\cdot;\bm{f}, \delta_{\bm{f}}) &= \lim\limits_{\varepsilon\to 0} \frac{1}{\varepsilon}\left(\theta(\cdot; \bm{f} + \varepsilon\delta_{\bm{f}}) - \theta(\cdot;\bm{f}))\right) \in \cIX{\hko{1}}.
\end{align}
Note that the occurring limits make sense in view of \eqref{eq:estimate under (ii)}. 

The sensitivity problem is now stated by the following set of equations,
\begin{equation}
    \label{eq:ISP11problemPS}
    \tag{$P_S^{11}$}
    \left\{
    \begin{array}{rll}
        \rho \partial_{tt}(\delta \bm{u}) -\nabla \cdot \sigma(\delta \bm{u}) + \gamma \nabla (\delta\theta) &= g\delta_{\bm{f}}  &\text{ in } Q_T \\
        \rho C_s \partial_t (\delta \theta) - \kappa \Delta (\delta \theta) - k\ast \Delta (\delta\theta) + T_0 \gamma \nabla \cdot \partial_t (\delta \bm{u}) &= 0&   \text{ in } Q_T \\
        \delta\bm{u}(\X,t) = \bm{0}, \, \delta\theta(\X,t) &=0 &  \text{ in } \Sigma_T \\
        \delta\bm{u}(\X,0) = \bm{0}, \, \partial_t \delta\bm{u}(\X,0) =\bm{0}, \, \delta\theta(\X,0) &= 0 &  \text{ in } \Omega,
    \end{array} 
    \right.
\end{equation}
and is obtained by expressing that the couples $(\bm{u}(\cdot;\bm{f}+\varepsilon\delta_{\bm{f}}), \theta(\cdot;\bm{f}+\varepsilon\delta_{\bm{f}}))$ and $(\bm{u}(\cdot;\bm{f}), \theta(\cdot;\bm{f}))$ form solutions to \eqref{eq:ISP11problemP1} with (perturbed) source $\bm{f}+\varepsilon\delta_{\bm{f}}$ and $\bm{f}$ respectively, for $\varepsilon>0$ and using \eqref{eq:variationudelta}-\eqref{eq:variationthetadelta}.  

\begin{remark}\label{rem:sensitivityproblemresebmlesdirect}
    Note that the problem \eqref{eq:ISP11problemPS} is equivalent to \eqref{eq:ISP11problemP1}, taking the perturbation $\delta_{\bm{f}}$ in the right-hand side instead of the function $\bm{f}.$
\end{remark}

The adjoint problem is stated in the unknown functions $\bm{u}^\ast$ and $\theta^\ast.$ It involves the remainder $\bm{u}(\cdot;\bm{f}) - \bm{\Xi}_T$ as a terminal velocity value and the operator $K^\ast$ from \eqref{eq:Kast}, i.e.
\begin{equation}
    \label{eq:ISP11problemPA}
    \tag{$P_A^{11}$}
    \left\{
    \begin{array}{rll}
        \rho \partial_{tt}\bm{u}^\ast -\nabla \cdot \sigma(\bm{u}^\ast) + \gamma T_0 \nabla \partial_t \theta^\ast  &=\bm{0}  & \quad \text{ in } Q_T \\
        -\rho C_s \partial_t \theta^\ast - \kappa \Delta  \theta^\ast - K^\ast(\Delta \theta^\ast ) - \gamma \nabla \cdot \bm{u}^\ast &= 0&  \quad  \text{ in } Q_T \\
        \bm{u}^\ast(\X,t) = \bm{0}, \quad \theta^\ast(\X,t) &=0 & \quad  \text{ in } \Sigma_T \\
        \bm{u}^\ast(\X,T) = \bm{0},\, \partial_t \bm{u}^\ast(\X,T) =\bm{u}(\X,T;\bm{f})-\bm{\Xi}_T(\X), \,\theta^\ast(\X,T) &= 0 & \quad \text{ in } \Omega.
    \end{array} 
    \right.
\end{equation}
This is a terminal-value problem, where the values of the unknown adjoint functions $(\bm{u}^\ast, \theta^\ast)$ are given by the final time conditions at $t=T.$ This system will be solved backwards in time. The problem \eqref{eq:ISP11problemPA} remains well posed due to the negative sign in $-\rho C_s\partial_t \theta^\ast.$ Notice that the solution depends on the function $\bm{f}$, we will write $(\bm{u}^\ast(\cdot;\bm{f}), \theta^\ast(\cdot;\bm{f}))$ for the solution to \eqref{eq:ISP11problemPA} to emphasise this dependence. This specific form of the adjoint problem will naturally follow in the computations carried out in the next subsection.

\subsection{G\^{a}teaux differential and gradient formulas}
We now aim to compute the G\^{a}teaux differential $\mathcal{J}^\prime_\beta(\bm{f}; \delta_{\bm{f}})$ of the cost functional $\mathcal{J}_\beta$ at $\bm{f}$ in the direction $\delta_{\bm{f}}.$ Denoting the space of directions $\delta_{\bm{f}}$ by $\mathcal{H}   \subseteq \Lp{2},$ which is assumed to be a real Hilbert space, we will be able to realise $\mathcal{J}^\prime_\beta(\bm{f};\cdot)$ as a bounded linear functional $\mathcal{H}\to \mathbb{R},$ see \Cref{thm:gateauxJT11}, for which we will use the sensitivity problem \eqref{eq:ISP11problemPS}. Therefore, invoking the Riesz representation theorem, there is a Riesz representer, denoted by $ \nabla\mathcal{J}_\beta[\bm{f}] = \nabla\mathcal{J}_\beta \in \mathcal{H},$ which is the unique element in $\mathcal{H}$ such that
\begin{equation}
    \label{eq:riesz}
\mathcal{J}^\prime_\beta(\bm{f}; \delta_{\bm{f}}) = \langle \nabla \mathcal{J}_\beta[\bm{f}], \delta_{\bm{f}}\rangle_{\mathcal{H}}, \quad \forall \delta_{\bm{f}} \in \mathcal{H}. 
\end{equation} 
We will suppress the dependence of $\bm{f}$ in $\nabla\mathcal{J}_\beta[\bm{f}]$ if it is clear from the context. Specifying different search directions $\mathcal{H}$ will yield different `gradients'. As will be seen in \eqref{eq:directionalderivativeJ}, the dependence of the perturbation $\delta_{\bm{f}}$ will not be explicit in the first part of the formula for $\mathcal{J}^\prime_\beta(\bm{f};\delta_{\bm{f}}).$ By suitable using the adjoint problem, we will make this dependence explicit for the case $\mathcal{H}=\Lp{2},$ see \Cref{thm:gradientsL2}, obtaining an expression for $\nabla \mathcal{J}_\beta \in\Lp{2}$ in terms of the solution $(\bm{u}^\ast, \theta^\ast)$ of the adjoint problem \eqref{eq:ISP11problemPA}. Afterwards, we discuss how the obtained $\Lp{2}$-gradient can be modified to obtain the Sobolev gradient $\nabla_S \mathcal{J}_\beta$ which corresponds to \eqref{eq:riesz} for the case $\mathcal{H} =\Hk{1},$ see \Cref{thm:gradientsS}.

\begin{theorem}\label{thm:gateauxJT11}
    The G\^{a}teaux directional derivative of $\mathcal{J}_\beta$ given by \eqref{eq:JT11} in the direction of the perturbation $\delta_{\bm{f}} \in \mathcal{H} \subseteq \Lp{2}$ at the function  $\bm{f}\in \Lp{2}$  is given by 
    \begin{equation}
        \label{eq:directionalderivativeJ}
        \mathcal{J}^\prime_\beta(\bm{f};\delta_{\bm{f}}) = \int_\Omega \delta\bm{u}(\X,T;\bm{f}, \delta_{\bm{f}}) \cdot \left(\bm{u}(\X,T;\bm{f}) - \bm{\Xi}_T(\X) \right) \,\mathrm{d}\X  + \beta \int_\Omega \bm{f} \cdot \delta_{\bm{f}} \,\mathrm{d}\X.
    \end{equation}
    The mapping $\mathcal{J}^\prime_\beta(\bm{f};\cdot) \colon \mathcal{H} \to \mathbb{R}$ defines a bounded linear functional.
\end{theorem}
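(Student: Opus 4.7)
The plan is to compute the directional derivative directly from its definition by Taylor-expanding $\mathcal{J}_\beta(\bm{f}+\varepsilon\delta_{\bm{f}})$ around $\varepsilon=0$. The crucial observation is that the direct problem \eqref{eq:ISP11problemP1} is linear in the source $\bm{f}$, as already noted in \Cref{rem:sensitivityproblemresebmlesdirect}. Since the initial data are homogeneous, the map $\bm{f}\mapsto \bm{u}(\cdot,\cdot;\bm{f})$ is linear, so for every $\varepsilon>0$ the identity
\begin{equation*}
\bm{u}(\cdot,\cdot;\bm{f}+\varepsilon\delta_{\bm{f}}) - \bm{u}(\cdot,\cdot;\bm{f}) = \varepsilon\, \delta\bm{u}(\cdot,\cdot;\bm{f},\delta_{\bm{f}})
\end{equation*}
holds exactly, where $\delta\bm{u}$ solves the sensitivity problem \eqref{eq:ISP11problemPS}. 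In particular, the limit in \eqref{eq:variationudelta} requires no true passage to the limit.

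Substituting this identity into \eqref{eq:JT11} gives
\begin{equation*}
\mathcal{J}_\beta(\bm{f}+\varepsilon\delta_{\bm{f}})
= \frac{1}{2}\nrm{\bm{u}(\cdot,T;\bm{f}) - \bm{\Xi}_T + \varepsilon\,\delta\bm{u}(\cdot,T;\bm{f},\delta_{\bm{f}})}^2 + \frac{\beta}{2}\nrm{\bm{f}+\varepsilon\delta_{\bm{f}}}^2.
\end{equation*}
I would then expand both squared $\Lp{2}$-norms, subtract $\mathcal{J}_\beta(\bm{f})$, divide by $\varepsilon$, and send $\varepsilon\to 0$. The cross terms, linear in $\varepsilon$, produce exactly the two integrals in \eqref{eq:directionalderivativeJ}, while the $\varepsilon^2$ terms vanish in the limit.

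The linearity of $\delta_{\bm{f}}\mapsto \mathcal{J}_\beta^\prime(\bm{f};\delta_{\bm{f}})$ then follows from the linearity of the sensitivity problem \eqref{eq:ISP11problemPS} with respect to its right-hand side $g\delta_{\bm{f}}$, combined with the linearity of the $\beta$-term. For boundedness, Cauchy--Schwarz applied to both integrals in \eqref{eq:directionalderivativeJ} yields
\begin{equation*}
\abs{\mathcal{J}_\beta^\prime(\bm{f};\delta_{\bm{f}})} \leq \nrm{\delta\bm{u}(\cdot,T;\bm{f},\delta_{\bm{f}})}\,\nrm{\bm{u}(\cdot,T;\bm{f})-\bm{\Xi}_T} + \beta\,\nrm{\bm{f}}\,\nrm{\delta_{\bm{f}}},
\end{equation*}
and the a priori estimate \eqref{eq:estimate under (ii)} applied to \eqref{eq:ISP11problemPS} controls $\nrm{\delta\bm{u}(\cdot,T;\bm{f},\delta_{\bm{f}})}$ by $C\nrm{\delta_{\bm{f}}}$. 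Together with the continuous embedding $\mathcal{H}\hookrightarrow\Lp{2}$, this delivers the bounded linear functional on $\mathcal{H}$.

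The only real obstacle is bookkeeping: one must verify that the regularity framework of \Cref{lem:forward_problem}(ii) is applicable to the sensitivity problem with source $g(t)\delta_{\bm{f}}(\X)$. This is ensured by $g\in\Cont^1([0,T])$ (implied by $g\in\mathcal{C}_1$), homogeneous initial conditions, and $\delta_{\bm{f}}\in\Lp{2}$; the compatibility of the PDEs at $t=0$ is automatic because all prescribed data vanish. No substantive analytic difficulty arises beyond this routine verification, so the proof reduces to a chain-rule computation made rigorous by the linearity of the forward map.
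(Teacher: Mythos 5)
Your proposal is correct and follows essentially the same route as the paper: both exploit the linearity of the forward map (equivalently, of $M_T$, with $M_T(\delta_{\bm{f}})=\delta\bm{u}(\cdot,T;\bm{f},\delta_{\bm{f}})$ by \Cref{rem:sensitivityproblemresebmlesdirect}) to expand $\mathcal{J}_\beta(\bm{f}+\varepsilon\delta_{\bm{f}})-\mathcal{J}_\beta(\bm{f})$ exactly into a linear-in-$\varepsilon$ cross term plus an $O(\varepsilon^2)$ remainder, and then obtain boundedness from Cauchy--Schwarz together with the a priori estimate of \Cref{lem:forward_problem}. The only cosmetic difference is that the paper phrases the computation through the operator $M_T$ rather than through $\delta\bm{u}$ directly.
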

\begin{proof}
    By definition, the G\^{a}teaux derivative at $\bm{f}$ in the direction $\delta_{\bm{f}}$ is given by
    $$
    \mathcal{J}^\prime_\beta(\bm{f}; \delta_{\bm{f}}) = \lim\limits_{\varepsilon\to 0} \frac{\mathcal{J}_{\beta}(\bm{f} + \varepsilon\delta_{\bm{f}}) - \mathcal{J}_{\beta} (\bm{f})}{\varepsilon} = \frac{\mathrm{d}}{\mathrm{d}\varepsilon} \left.\mathcal{J}_\beta\left(\bm{f} + \varepsilon\delta_{\bm{f}}\right) \right|_{\varepsilon=0}.
    $$
    A small calculation shows that 
   
    \begin{multline}\label{thm:gateauxJT11:eq1}
\mathcal{J}_\beta(\bm{f} + \varepsilon\delta_{\bm{f}}) - \mathcal{J}_\beta(\bm{f}) \\
= \varepsilon\scal{ M_T(\bm{f}) - \bm{\Xi}_T}{M_T(\delta_{\bm{f}})} + \frac{\epsilon^2}{2}\nrm{M_T(\delta_{\bm{f}})}^2 
 + \varepsilon\beta\scal{\bm{f}}{\delta_{\bm{f}}} + \frac{\varepsilon^2\beta}{2} \nrm{\delta_{\bm{f}}}^2.
\end{multline}
After dividing \eqref{thm:gateauxJT11:eq1} by $\varepsilon\neq 0$ and passing to the limit $\varepsilon\to 0$ in the result, we recover \eqref{eq:directionalderivativeJ}.
It is clear that $\mathcal{J}^\prime_\beta(\bm{f};\cdot)$ is linear. By \Cref{lem:forward_problem}, we have that $\nrm{M_T(\delta_{\bm{f}})}  \leq C\nrm{\delta_{\bm{f}}}$ and hence
\begin{align*}
\abs{\mathcal{J}^\prime_\beta(\bm{f};\delta_{\bm{f}})}
%& \leq \nrm{M_T(\bm{f}) - \bm{\Xi}_T} \nrm{M_T(\delta_{\bm{f}})} + \beta\nrm{\bm{f}}\nrm{\delta_{\bm{f}}} \\
 & \leq (C \nrm{M_T(\bm{f}) - \bm{\Xi}_T} + \beta\nrm{\bm{f}})\nrm{\delta_{\bm{f}}}. \qedhere
\end{align*}
\end{proof}
\begin{theorem}\label{thm:gradientsL2}
Let $\mathcal{H}=\Lp{2},$ then we have
\begin{equation}
    \label{eq:gradientL2}
    \nabla \mathcal{J}_\beta[\bm{f}] = -\frac{1}{\rho} \int_0^T g(t) \bm{u}^\ast(\cdot,t;\bm{f}) \,\mathrm{d}t + \beta \bm{f} \in \Lp{2},
\end{equation}
where $(\bm{u}^\ast, \theta^\ast)$ is the solution to the adjoint problem \eqref{eq:ISP11problemPA}.
\end{theorem}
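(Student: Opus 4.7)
The plan is to start from the G\^{a}teaux directional derivative formula \eqref{eq:directionalderivativeJ} of \Cref{thm:gateauxJT11}. The second summand $\beta\int_\Omega \bm{f}\cdot\delta_{\bm{f}}\,\mathrm{d}\X$ is already an $\Lp{2}$-Riesz pairing with $\delta_{\bm{f}}$ and will contribute the $\beta\bm{f}$ term of the claimed gradient. The substantive work is to re-express the first summand, involving $\delta\bm{u}(\cdot,T;\bm{f},\delta_{\bm{f}})$ only implicitly through the sensitivity system \eqref{eq:ISP11problemPS}, as an explicit $\Lp{2}$-pairing against $\delta_{\bm{f}}$. The adjoint system \eqref{eq:ISP11problemPA} has been engineered so that precisely this identification drops out.

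The core step is the standard adjoint-calculus manoeuvre: test the first equation of \eqref{eq:ISP11problemPS} with $\bm{u}^\ast(\cdot,t;\bm{f})$ and the second with $\theta^\ast(\cdot,t;\bm{f})$, integrate over $(0,T)$, and shift every time and space derivative from $(\delta\bm{u},\delta\theta)$ onto $(\bm{u}^\ast,\theta^\ast)$ by integration by parts. The zero initial conditions of the sensitivity system and the terminal conditions of the adjoint system are arranged so that all temporal boundary contributions vanish, apart from the single term $-\rho(\delta\bm{u}(\cdot,T),\partial_t\bm{u}^\ast(\cdot,T))=-\rho(\delta\bm{u}(\cdot,T),\bm{u}(\cdot,T;\bm{f})-\bm{\Xi}_T)$ produced by the double IBP in time on $\rho(\partial_{tt}\delta\bm{u},\bm{u}^\ast)$. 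The thermoelastic couplings balance consistently: one temporal plus one spatial IBP turns $T_0\gamma\int_0^T(\nabla\cdot\partial_t\delta\bm{u},\theta^\ast)\,\mathrm{d}t$ into $T_0\gamma\int_0^T(\delta\bm{u},\nabla\partial_t\theta^\ast)\,\mathrm{d}t$, which is exactly the coupling present in the first adjoint equation of \eqref{eq:ISP11problemPA} tested with $\delta\bm{u}$, and the term $\gamma\int_0^T(\nabla\delta\theta,\bm{u}^\ast)\,\mathrm{d}t$ turns into $-\gamma\int_0^T(\delta\theta,\nabla\cdot\bm{u}^\ast)\,\mathrm{d}t$, matching the corresponding coupling in the second adjoint equation. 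Invoking \eqref{eq:ISP11problemPA} annihilates all remaining interior volume terms, and one is left with
\[
\rho\,\bigl(\delta\bm{u}(\cdot,T;\bm{f},\delta_{\bm{f}}),\,\bm{u}(\cdot,T;\bm{f})-\bm{\Xi}_T\bigr) \;=\; -\int_0^T\!\!\int_\Omega g(t)\,\bm{u}^\ast(\X,t;\bm{f})\cdot\delta_{\bm{f}}(\X)\,\mathrm{d}\X\,\mathrm{d}t.
\]
Substitution into \eqref{eq:directionalderivativeJ} then gives $\mathcal{J}'_\beta(\bm{f};\delta_{\bm{f}})=\bigl(-\tfrac{1}{\rho}\int_0^T g(t)\bm{u}^\ast(\cdot,t;\bm{f})\,\mathrm{d}t+\beta\bm{f},\,\delta_{\bm{f}}\bigr)$ for every $\delta_{\bm{f}}\in\Lp{2}$, and uniqueness of the Riesz representer in \eqref{eq:riesz} identifies the bracketed element with $\nabla\mathcal{J}_\beta[\bm{f}]$, which is \eqref{eq:gradientL2}.

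The most delicate bookkeeping is for the memory term $k\ast\nabla\delta\theta$ appearing in the sensitivity heat equation. It is transferred onto the adjoint via the convolution duality $\int_0^T(k\ast w,v)\,\mathrm{d}t=\int_0^T(w,K^\ast v)\,\mathrm{d}t$, a Fubini identity on the kernel $k(t-s)$, which is exactly why the adjoint problem \eqref{eq:ISP11problemPA} is formulated with $K^\ast$ rather than $k\ast$ and integrated backward in time. Combining this with the commutation $K^\ast(\nabla\theta^\ast)=\nabla K^\ast(\theta^\ast)$ and one spatial IBP produces the $(\nabla\delta\theta,K^\ast(\nabla\theta^\ast))$ contribution that cancels against the adjoint identity. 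Beyond this, the remaining difficulty is purely notational: tracking signs and endpoint contributions for the coupled hyperbolic--parabolic system, all of which is controlled by the regularity furnished by \Cref{lem:forward_problem}(ii).
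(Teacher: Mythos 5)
Your proposal is correct and follows essentially the same route as the paper's proof: testing the sensitivity system \eqref{eq:ISP11problemPS} with the adjoint pair $(\bm{u}^\ast,\theta^\ast)$, integrating by parts in time and space so that the zero initial data and the terminal conditions of \eqref{eq:ISP11problemPA} kill all boundary contributions except the $-\rho\scal{\delta\bm{u}(\cdot,T)}{\bm{u}(\cdot,T;\bm{f})-\bm{\Xi}_T}$ term, transferring the memory term via the Fubini duality between $k\ast$ and $K^\ast$, and then invoking the adjoint equations to reduce \eqref{eq:directionalderivativeJ} to an explicit $\Lp{2}$-pairing with $\delta_{\bm{f}}$. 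The identity you isolate, including the $1/\rho$ factor, matches the paper's equation \eqref{eq:hyperbolicsensadjtransf} after simplification, so no gaps remain.
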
 
\begin{proof} 
Fix $\bm{f}\in \Lp{2}$ and $\delta_{\bm{f}}$ in $\mathcal{H}=\Lp{2}.$ Let  the couple $(\delta\bm{u},\delta\theta)=(\delta\bm{u}(\cdot;\bm{f},\delta_{\bm{f}}), \delta\theta(\cdot;\bm{f},\delta_{\bm{f}}))$ be the solution to the sensitivity problem \eqref{eq:ISP11problemPS} and $(\bm{u}^\ast, \theta^\ast)=(\bm{u}^\ast(\cdot;\bm{f}), \theta^\ast(\cdot;\bm{f}))$ be the solution to the adjoint problem \eqref{eq:ISP11problemPA}. Multiply the equations for $(\delta\bm{u}, \delta\theta)$ with $(\bm{u}^\ast, \theta^\ast)$ respectively, and integrate both over the domain $\Omega$ and over the time interval $(0,T).$ This procedure yields the equations
\begin{multline} 
\label{eq:hyperbolicsensadj}
        \rho \int_0^T \scal{ \partial_{tt}(\delta \bm{u})}{\bm{u}^\star}\dt - \int_0^T \scal{\nabla \cdot \sigma(\delta \bm{u})}{\bm{u}^\star}\dt + \gamma \int_0^T\scal{\nabla \delta \theta}{\bm{u}^\star}\dt  \\= \int_0^T g(t) \scal{\delta_{\bm{f}}}{\bm{u}^\star} \dt  = \scal{\delta_{\bm{f}}}{\int_0^T g(t) \bm{u}^\star\dt }
\end{multline}
and 
\begin{multline}
\label{eq:parabolicsensadj}
        \rho C_s \int_0^T \scal{\partial_t(\delta\theta)}{\theta^\star} \dt  - \kappa \int_0^T \scal{\Delta(\delta \theta)}{\theta^\star} \dt  - \int_0^T \scal{(k \ast \Delta(\delta\theta))}{\theta^\star} \dt \\ + \gamma T_0 \int_0^T  \scal{\nabla \cdot \partial_t(\delta \bm{u})}{\theta^\star}\dt = 0.
\end{multline}
Applying partial integration in time and Green's theorem, taking into account the homogeneous Dirichlet boundary conditions, the initial conditions for $(\delta \bm{u}, \delta\theta)$ and the final time conditions for $(\bm{u}^\ast, \theta^\ast),$ leads to the following set of relations
\begin{align*}
\int_0^T \scal{\partial_{tt}(\delta \bm{u})}{\bm{u}^\star}\dt
&=  \left[ \scal{\partial_t(\delta\bm{u})}{\bm{u}^\star}\right]_{t=0}^T - \left[ \scal{\delta\bm{u}}{\partial_t\bm{u}^\star}\right]_{t=0}^T + \int_0^T \scal{\delta\bm{u}}{\partial_{tt}\bm{u}^\star} \dt  \\ 
&= - \int_\Omega\delta \bm{u}(\X,T;\bm{f},\delta_{\bm{f}}) \cdot \left(\bm{u}(\X,T; \bm{f})  - \bm{\Xi}_T(\X)\right)\di \X \\
 & \qquad \qquad+ \int_0^T \scal{\delta\bm{u}}{\partial_{tt} \bm{u}^\ast} \dt,
\end{align*} 
together with
\begin{align*} 
\int_0^T \scal{\nabla \cdot \sigma(\delta \bm{u})}{\bm{u}^\star} \dt &=  \int_0^T \scal{ \delta \bm{u}}{\nabla \cdot \sigma(\bm{u}^\star)}\dt, \\  \int_0^T \scal{\nabla \delta \theta}{\bm{u}^\star}\dt &= - \int_0^T \scal{\delta \theta}{\nabla\cdot \bm{u}^\star} \dt, \\
\int_0^T \scal{\partial_t(\delta\theta)}{\theta^\star}\dt &= - \int_0^T \scal{\delta\theta}{\partial_t \theta^\star}\dt, \\
\int_0^T \scal{\Delta (\delta \theta)}{\theta^\star}\dt  &=\int_0^T \scal{\delta\theta}{\Delta \theta^\star}\dt, \\
\int_0^T \scal{(k\ast \Delta (\delta \theta)}{\theta^\star}\dt  &= \int_0^T \scal{\delta \theta}{K^\ast(\Delta \theta^\star}\dt, \\
\int_0^T \scal{\nabla \cdot \partial_t(\delta \bm{u})}{\theta^\star}\dt &= \int_0^T \scal{\delta \bm{u}}{\nabla \partial_t \theta^\star}\dt. 
\end{align*}
In doing so, the equations \eqref{eq:hyperbolicsensadj}-\eqref{eq:parabolicsensadj} transform into 
\begin{multline}
\label{eq:hyperbolicsensadjtransf}
    \rho\int_0^T \scal{\delta \bm{u}}{\partial_{tt} \bm{u}^\ast} \dt  - \rho \int_\Omega \delta\bm{u}(\X,T;\bm{f},\delta_{\bm{f}})\cdot \left( \bm{u}(\X,T;\bm{f}) - \bm{\Xi}_T(\X)\right)\dX \\ - \int_0^T  \scal{\delta \bm{u}}{\nabla \cdot\sigma(\bm{u}^\ast)} \dt  - \gamma \int_0^T \scal{\delta \theta}{\nabla \cdot \bm{u}^\ast}\dt = \scal{\delta_{\bm{f}}}{ \int_0^T g(t)\bm{u}^\ast(\cdot,t;\bm{f})\dt }
\end{multline}
and 
\begin{multline}
\label{eq:parabolicsensadjtransf}
    \rho C_s \int_0^T \scal{\delta \theta}{-\partial_t \theta^\ast}\dt - \kappa \int_0^T \scal{\delta \theta}{\Delta \theta^\ast}\dt - \int_0^T  \scal{\delta \theta}{ K^\ast(\Delta \theta^\ast)}\dt  \\ + \gamma T_0 \int_0^T \scal{\delta \bm{u}}{\nabla \partial_t \theta^\ast}\dt = 0.
\end{multline}
The final step is to add \eqref{eq:hyperbolicsensadjtransf} and \eqref{eq:parabolicsensadjtransf} together, collecting the terms with $\delta\bm{u}$ and $\delta\theta$ and relying on the fact that $(\bm{u}^\ast, \theta^\ast)$ is a solution to \eqref{eq:ISP11problemPA}. This yields after simplification
$$
\int_\Omega \delta\bm{u}(\X,T; \bm{f},\delta_{\bm{f}}) \cdot\left( \bm{u}(\X,T;\bm{f}) - \bm{\Xi}_T(\X)\right)\dX  = \scal{\delta_{\bm{f}}}{-\frac{1}{\rho} \int_0^T g(t)\bm{u}^\ast(\cdot,t;\bm{f}) \dt}.
$$
Recalling the expressions \eqref{eq:directionalderivativeJ} from  \Cref{thm:gateauxJT11} and \eqref{eq:riesz}, we conclude
$$
\mathcal{J}^\prime_\beta(\bm{f};\delta_{\bm{f}}) = \scal{\delta_{\bm{f}}}{-\frac{1}{\rho} \int_0^T g(t)\bm{u}^\ast(\cdot,t;\bm{f})  + \beta \bm{f}} = \langle \nabla \mathcal{J}_\beta[\bm{f}], \delta_{\bm{f}}\rangle_{\Lp{2}}
$$
and therefore the proof of this theorem is finished.
\end{proof} 

By realising that, as a solution to the adjoint problem \eqref{eq:ISP11problemPA}, the function $\bm{u}^\ast(\cdot;\bm{f})$ vanishes at the boundary, so does the $\Lp{2}$-gradient $\nabla \mathcal{J}_{0}.$ 
Therefore, in the gradient methods based on the $\Lp{2}$-gradient $\nabla\mathcal{J}_0$ considered in the following subsection, the values of the iterates $\bm{f}_n$ will remain fixed once the starting point $\bm{f}_0\colon\overline{\Omega}\to\mathbb{R}$ is chosen. This behaviour is also observed for the Landweber procedure discussed in \Cref{sec:landweber}. 
Hence, near the boundary, the iterative method might struggle to converge to the sought solution if there is a mismatch in the boundary data. To overcome this difficulty, we will use the Sobolev gradient method \cite{JinZou2010,Neuberger2009}. Furthermore, to accelerate the convergence of the gradient algorithm, we parameterise the standard $\mathcal{H}=\Hk{1}$ inner product. This approach can be seen as gradient preconditioning; see \cite{NovruziProtas2018} and the references therein. Henceforth, we consider two continuous strictly positive weight functions
\begin{equation} 
\label{eq:r0r1weights}
r_0, r_1 \colon \Omega \to \mathbb{R},  \quad  0<\underline{r}_0\leq r_0 \leq \overline{r}_0, \quad 0<\underline{r}_1\leq r_1 \leq \overline{r}_1,
\end{equation}  which will tune the regularisation of the gradient, and are used in the weighted inner product
\begin{equation}
    \label{eq:weightedinnerproductHk}
    \langle \bm{u}, \bm{v}\rangle_{\HHi^1(\Omega,r_0,r_1)} := \scal{r_0 \bm{u}}{\bm{v}} + \scal{r_1\nabla \bm{u}}{\nabla{\bm{v}}}, \quad \forall \bm{u}, \bm{v} \in \Hk{1},
\end{equation}
where we denote with $\HHi^1(\Omega, r_0, r_1)$ the space $\Hk{1}$ equipped with the above inner product \eqref{eq:weightedinnerproductHk}.

\begin{theorem}\label{thm:gradientsS}
Let $\mathcal{H} = \Hk{1}$ be equipped with the inner product structure \eqref{eq:weightedinnerproductHk}. Let $\bm{f} \in \Lp{2}$ be fixed and let $\nabla\mathcal{J}_\beta:=\nabla\mathcal{J}_\beta[\bm{f}]$ be the $\Lp{2}$-gradient \eqref{eq:gradientL2}. Then we have that the Sobolev gradient $\nabla_S \mathcal{J}_\beta:=\nabla_S \mathcal{J}_\beta[\bm{f}] \in\Hk{1}$ satisfying the relation \eqref{eq:riesz} is the weak solution $\mathcal{K}$ to the elliptic boundary value problem
\begin{equation}
    \label{eq:gradientellitpicbvpJT}
    \left\{
    \begin{array}{rll}
    - \nabla \cdot \left( r_1 \nabla \mathcal{K} \right) + r_0\mathcal{K} &= \nabla \mathcal{J}_\beta & \text{ in } \Omega \\
    \nabla\mathcal{K}\,\bm{n} &= \bm{0} & \text{ on } \Gamma,
    \end{array} 
    \right.
\end{equation}
 where $\bm{n}$ is the outward unit vector on the boundary $\Gamma$ of the domain. 
\end{theorem}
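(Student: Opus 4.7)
The plan is a direct application of the Riesz--Fr\'echet representation theorem in the Hilbert space $(\Hk{1}, \langle\cdot,\cdot\rangle_{\HHi^1(\Omega,r_0,r_1)})$, combined with Green's identity to pass from the resulting weak identity to the stated strong formulation of \eqref{eq:gradientellitpicbvpJT}. The uniform bounds \eqref{eq:r0r1weights} on $r_0,r_1$ ensure that $\langle\cdot,\cdot\rangle_{\HHi^1(\Omega,r_0,r_1)}$ induces a norm equivalent to the standard $\Hk{1}$-norm, so $(\Hk{1}, \langle\cdot,\cdot\rangle_{\HHi^1(\Omega,r_0,r_1)})$ is a genuine Hilbert space.

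By \Cref{thm:gateauxJT11} the G\^ateaux differential $\mathcal{J}^\prime_\beta(\bm{f};\cdot)$ is a bounded linear functional on $\Hk{1}$, so Riesz--Fr\'echet yields a unique $\mathcal{K} := \nabla_S \mathcal{J}_\beta[\bm{f}] \in \Hk{1}$ representing it through $\langle\cdot,\cdot\rangle_{\HHi^1(\Omega,r_0,r_1)}$. Substituting the explicit formula $\mathcal{J}^\prime_\beta(\bm{f};\delta_{\bm{f}}) = \scal{\nabla\mathcal{J}_\beta[\bm{f}]}{\delta_{\bm{f}}}$ from \Cref{thm:gradientsL2} (valid for every $\delta_{\bm{f}} \in \Hk{1} \subset \Lp{2}$) into the defining relation \eqref{eq:riesz} gives
$$
\scal{r_0\mathcal{K}}{\delta_{\bm{f}}} + \scal{r_1\nabla\mathcal{K}}{\nabla \delta_{\bm{f}}} = \scal{\nabla\mathcal{J}_\beta[\bm{f}]}{\delta_{\bm{f}}}, \quad \forall \delta_{\bm{f}} \in \Hk{1},
$$
which is exactly the weak formulation of the boundary-value problem \eqref{eq:gradientellitpicbvpJT}.

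To recover the classical form and identify the natural boundary condition, I would apply Green's identity (interpreting the trace in $\HHi^{1/2}(\Gamma)$ if $\mathcal{K} \notin \HHi^2(\Omega)$) to rewrite
$$
\scal{r_1\nabla\mathcal{K}}{\nabla\delta_{\bm{f}}} = -\scal{\nabla\cdot(r_1\nabla\mathcal{K})}{\delta_{\bm{f}}} + \int_{\Gamma} r_1\, (\nabla\mathcal{K}\,\bm{n}) \cdot \delta_{\bm{f}} \, \mathrm{d}S.
$$
Restricting first to $\delta_{\bm{f}} \in \Hko{1}$ annihilates the boundary integral and identifies the PDE $-\nabla\cdot(r_1\nabla\mathcal{K}) + r_0\mathcal{K} = \nabla\mathcal{J}_\beta$ a.e.\ in $\Omega$; afterwards letting $\delta_{\bm{f}} \in \Hk{1}$ with arbitrary trace on $\Gamma$ forces $r_1 \nabla\mathcal{K}\,\bm{n} = \bm{0}$, equivalent to $\nabla\mathcal{K}\,\bm{n} = \bm{0}$ on $\Gamma$ thanks to $r_1 \geq \underline{r}_1 > 0$.

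The only delicate point is confirming that a natural Neumann condition, rather than a homogeneous Dirichlet condition, emerges from the Riesz identity: this is precisely the reason why the test space is the full $\Hk{1}$ and not $\Hko{1}$, and it is exactly this free boundary behaviour that allows the gradient iterates to update $\left.\bm{f}_k\right|_{\Gamma}$ during the algorithm, curing the shortcoming of the $\Lp{2}$-gradient methods observed just after \Cref{thm:gradientsL2}.
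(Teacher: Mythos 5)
Your proposal is correct and follows essentially the same route as the paper: both invoke the Riesz representation theorem in the weighted space $\HHi^1(\Omega,r_0,r_1)$, equate the resulting representation with the $\Lp{2}$-representation from \Cref{thm:gradientsL2}, and recognise the identity $\scal{r_0\mathcal{K}}{\delta_{\bm{f}}} + \scal{r_1\nabla\mathcal{K}}{\nabla\delta_{\bm{f}}} = \scal{\nabla\mathcal{J}_\beta}{\delta_{\bm{f}}}$ for all $\delta_{\bm{f}}\in\Hk{1}$ as the weak formulation of \eqref{eq:gradientellitpicbvpJT}. The extra details you supply (norm equivalence from \eqref{eq:r0r1weights}, and the Green's-identity argument recovering the natural Neumann condition) are correct refinements that the paper leaves implicit by stopping at the weak formulation.
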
 
\begin{proof}
Noting that $\mathcal{H}=\Hk{1} \subset \Lp{2}$ an invoking \Cref{thm:gateauxJT11}, we have on the one hand that
$$
\mathcal{J}^\prime_\beta(\bm{f};\delta_{\bm{f}}) = \langle \nabla \mathcal{J}_\beta[\bm{f}], \delta_{\bm{f}} \rangle_{\Lp{2}},
$$
and on the other hand that $\mathcal{J}^\prime(\bm{f};\cdot)$ is a bounded linear functional 
\begin{equation*}
\mathcal{J}^\prime(\bm{f};\cdot) \colon \HHi^1(\Omega, r_0,r_1) \to\mathbb{R}
\end{equation*} whose Riesz representer will be denoted as $\nabla_S \mathcal{J}_\beta[\bm{f}].$ Therefore, the relation
$$
\langle \nabla_S \mathcal{J}_\beta[\bm{f}], \delta_{\bm{f}}\rangle_{\HHi^1(\Omega,r_0,r_1)} = \langle \nabla \mathcal{J}_\beta[\bm{f}], \delta_{\bm{f}}\rangle_{\Lp{2}},
$$
should hold for any $\delta_{\bm{f}} \in \Hk{1} \subset \Lp{2}.$ This precisely means that $\nabla_S\mathcal{J}_\beta[\bm{f}]$ is a weak solution to \eqref{eq:gradientellitpicbvpJT}. \qedhere 
\end{proof} 
The Sobolev gradient $\nabla_S\mathcal{J}_\beta \in \Hk{1}$ is smoother than the $\Lp{2}$-gradient $\nabla \mathcal{J}_\beta.$ Usually $r_0(\X)=1$ is taken, and $r_1$ is chosen by trial and error dependent on the amount of regularisation needed. However, some results for choosing these parameters are available in \cite{NovruziProtas2018}. Taking $r_1$ constant, we see that the Sobolev gradient is the $\Lp{2}$-gradient preconditioned by the operator $(I - r_1 \Delta)^{-1}.$ On the Fourier side, $r_1$ defines a cutoff region for the most oscillating modes \cite{JinZou2010,Lesnic2019}.

\subsection{Algorithms}\label{subsec:gradientalgorithms}
In this part, we describe two gradient methods. The standard steepest descent algorithm starts from the initial guess $\bm{f}_0\colon\overline{\Omega} \to \mathbb{R}$ with $\bm{f}_0 \in\Lp{2}$ and produces the updates
\begin{equation}
    \label{eq:steepest descent}
\left\{ 
\begin{array}{l} 
\bm{f}_n = \bm{f}_{n-1} - \tau_n \nabla_{\mathcal{H}} \mathcal{J}_\beta\left[\bm{f}_{n-1}\right],\\
\tau_n = \argmin\limits_{ \tau>0}  \mathcal{J}_\beta\left(\bm{f}_{n-1} -\tau \nabla_{\mathcal{H}} \mathcal{J}_\beta[\bm{f}_{n-1}]\right). 
\end{array} 
\right. 
\end{equation}
Here,  $\tau_n>0$ is the stepsize at the $n$-th iteration step whose optimal value can be found using e.g.\ line search in the corresponding (1D) minimisation problem,
 and $\nabla_{\mathcal{H}}\mathcal{J}_\beta$ is some gradient. For the problem at hand, we are able to obtain an explicit formula for $\tau_n$ based on the following lemma.
 
\begin{lemma}\label{lem:optimaltau}
Let $(\mathcal{H}_1,\scal{\cdot}{\cdot}_{\mathcal{H}_1})$ and $(\mathcal{H}_2,\scal{\cdot}{\cdot}_{\mathcal{H}_2})$ be real Hilbert spaces and fix $u,v\in \mathcal{H}_1$ and $w,z\in\mathcal{H}_2,$ with $v\neq0$ and $z\neq 0$. The function $j \colon \mathbb{R} \to \mathbb{R}$ given by
$$
j(\tau) = \frac{1}{2} \nrm{u + \tau v}^2_{\mathcal{H}_1} + \frac{\beta}{2}\nrm{w+\tau z}^2_{\mathcal{H}_2}, \quad \beta\geq 0,
$$
attains it minimum at
\begin{equation}\label{eq:steptauopt} 
\tau^\ast = - \frac{\scal{u}{v}_{\mathcal{H}_1} + \beta \scal{w}{z}_{\mathcal{H}_2} }{\nrm{v}^2_{\mathcal{H}_1}+ \beta\nrm{z}^2_{\mathcal{H}_2}}.
\end{equation} 
\end{lemma}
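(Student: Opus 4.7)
The plan is to expand $j(\tau)$ into an explicit polynomial in $\tau$ using the bilinearity of the inner products, recognise it as a convex quadratic in the scalar variable $\tau$, and minimise it by elementary calculus.

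First I would expand each squared norm via $\|a + \tau b\|^2_{\mathcal{H}} = \|a\|^2_{\mathcal{H}} + 2\tau \langle a, b\rangle_{\mathcal{H}} + \tau^2 \|b\|^2_{\mathcal{H}}$. Applying this to both terms of $j$ yields
\begin{equation*}
j(\tau) = \tfrac{1}{2}\left(\|v\|^2_{\mathcal{H}_1} + \beta \|z\|^2_{\mathcal{H}_2}\right)\tau^2 + \left(\langle u,v\rangle_{\mathcal{H}_1} + \beta \langle w,z\rangle_{\mathcal{H}_2}\right)\tau + \tfrac{1}{2}\left(\|u\|^2_{\mathcal{H}_1} + \beta\|w\|^2_{\mathcal{H}_2}\right),
\end{equation*}
which exhibits $j$ as a second-degree polynomial in $\tau$.

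Next I would observe that the leading coefficient $A := \tfrac12\big(\|v\|^2_{\mathcal{H}_1} + \beta \|z\|^2_{\mathcal{H}_2}\big)$ is strictly positive: since $v \neq 0$, we have $\|v\|^2_{\mathcal{H}_1}>0$, and the second contribution is nonnegative because $\beta \ge 0$. Hence $j$ is strictly convex on $\mathbb{R}$ and admits a unique global minimiser, obtained by solving $j'(\tau) = 0$. Differentiating termwise and isolating $\tau$ yields precisely the formula \eqref{eq:steptauopt}. Alternatively, one could complete the square in $\tau$ and read off the vertex of the resulting parabola, which gives the same expression.

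Honestly, there is no real obstacle here: the claim is a one-dimensional convex quadratic optimisation and the proof is entirely elementary, requiring only the bilinearity and symmetry of the two inner products and the positivity of the leading coefficient, which is guaranteed by the standing assumption $v \neq 0$ (note that the hypothesis $z \neq 0$ is in fact not needed for the argument when $\beta \geq 0$, but is used in related formulations to ensure well-posedness when $\beta > 0$).
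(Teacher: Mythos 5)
Your proposal is correct and follows essentially the same route as the paper's proof: expand $j(\tau)$ as a quadratic in $\tau$, observe it is an upward parabola (strictly positive leading coefficient since $v\neq 0$ and $\beta\geq 0$), and solve $j'(\tau^\ast)=0$ to obtain \eqref{eq:steptauopt}. Your added remark that the hypothesis $z\neq 0$ is not actually needed is a fair observation but does not change the argument.
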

\begin{proof}
    Note that
    $$
    2j(\tau) = \nrm{u}^2_{\mathcal{H}_1} + \beta \nrm{w}^2_{\mathcal{H}_2} + 2\tau\left(\scal{u}{v}_{\mathcal{H}_1} + \beta \scal{w}{z}_{\mathcal{H}_2}\right)  + \tau^2 \left(\nrm{v}^2_{\mathcal{H}_1}+\beta \nrm{z}^2_{\mathcal{H}_2} \right) 
    $$
    is an upwards parabola attaining its minimal value at the value $\tau^\ast$ satisfying $\partial_\tau j(\tau^\ast)=0,$ which is exactly \eqref{eq:steptauopt}.
\end{proof}

Applying \Cref{lem:optimaltau}  and using \eqref{eq:JT112}, the (optimal) value of $\tau_n$ for each step in \eqref{eq:steepest descent} is given by the expression
\begin{equation}
    \label{eq:steepestdesenttauoptimal}
    \tau_n^\ast =\frac{\scal{M_T(\bm{f}_{n-1}) - \bm{\Xi{_T}}}{M_T(\nabla_{\mathcal{H}}\mathcal{J}_\beta[\bm{f}_{n-1}])}+\beta \scal{\bm{f}_{n-1}}{\nabla_{\mathcal{H}} \mathcal{J}_\beta[\bm{f}_{n-1}]}}{\nrm{M_T(\nabla_{\mathcal{H}}\mathcal{J}_\beta[\bm{f}_{n-1}])}^2+ \beta \nrm{\nabla_\mathcal{H}\mathcal{J}_\beta[\bm{f}_{n-1}]}^2}.
\end{equation}

Based on the discussions above, we will consider the $\Lp{2}$-gradient and the Sobolev gradient, i.e.\ $\nabla_{\mathcal{H}}\mathcal{J}_\beta = \nabla\mathcal{J}_\beta$ and $\nabla_{\mathcal{H}}\mathcal{J}_\beta = \nabla_S\mathcal{J}_\beta$ given by \eqref{eq:gradientL2} and \eqref{eq:gradientellitpicbvpJT}, respectively. Note that the computation of $\nabla \mathcal{J}_\beta[\bm{f}]$ involves solving the direct problem \eqref{eq:ISP11problemP1} for $\bm{f}$ from which the solution is used in the adjoint problem \eqref{eq:ISP11problemPA} and then the expression \eqref{eq:gradientL2} is calculated. In the Sobolev case $\nabla_S\mathcal{J}_\beta[\bm{f}]$,  an additional elliptic boundary value problem \eqref{eq:gradientellitpicbvpJT} must be solved afterwards. 

The conjugate gradient method is based on the following procedure, starting from the initial guess  $\bm{f}_0,$ we calculate the first gradient $\nabla_{\mathcal{H}} \mathcal{J}_\beta[\bm{f}_0]$ and perform a line search to determine the first stepsize using \eqref{eq:steepestdesenttauoptimal}
$$
\tau_1^\ast =  \argmin\limits_{\tau>0} \left(\bm{f}_0 - \tau \nabla_{\mathcal{H}}\mathcal{J}_\beta[\bm{f}_0]\right),
$$
which yield the first iterate
$$
\bm{f}_1 = \bm{f}_0 - \tau_1^\ast \nabla_{\mathcal{H}}\mathcal{J}_\beta[\bm{f}_0].
$$
We initialize the conjugate gradient by setting $\nabla \mathcal{F}_0 = - \nabla_{\mathcal{H}}\mathcal{J}_\beta[\bm{f}_0] = \Lambda_0$ and start the iterative procedure for the $n$-th step ($n\geq1$):
\begin{enumerate}
    \item Calculate the steepest direction $ \Lambda_n = -\nabla_\mathcal{H} \mathcal{J}_\beta[\bm{f}_n];$
    \item Compute the Fletcher-Reeves coefficient $    \zeta_n = \nrm{\Lambda_n}^2 / \nrm{\Lambda_{n-1}}^2;    $
    \item Calculate the conjugate gradient $\nabla \mathcal{F}_n = \Lambda_n + \zeta_n \nabla \mathcal{F}_{n-1};$
    \item Compute the stepsize $\tau_n^\ast$ given by %\eqref{eq:steepestdesenttauoptimal};
    $$
    \tau_n^\ast = - \frac{\scal{M_T(\bm{f}_n)-\bm{\Xi}_T}{M_T(\nabla \mathcal{F}_n)} + \beta \scal{\bm{f}_n}{\nabla \mathcal{F}_n}}{ \nrm{M_T(\nabla \mathcal{F}_n)}^2  + \beta \nrm{\nabla \mathcal{F}_n}^2};
    $$
    \item Update the approximation
    $$
    \bm{f}_{n+1} = \bm{f}_{n} + \tau_n^\ast \nabla \mathcal{F}_{n}. 
    $$
\end{enumerate}
Note that here $M_T(\nabla \mathcal{F}_n)$ corresponds to the solution of the 
direct problem with given right-hand side $\nabla \mathcal{F}_n$, evaluated at the final time, i.e.\ $\bm{u}(\cdot,T; \nabla\mathcal{F}_n).$ 

\begin{remark}\label{rem:stoppinggradient}
The iterations for the gradient algorithms are stopped whenever the maximal number of iterations is surpassed or if the resulting $\bm{f}_{n+1}$ produces a larger value of the cost functional, i.e.\ we stop when $\mathcal{J}_\beta(\bm{f}_{n+1})>\mathcal{J}_\beta(\bm{f}_n)$ and retain $\bm{f}_n$ as the final approximation. 
\end{remark}

Summarising, the approximation of the unknown source in the inverse source problem can be attained by either the Landweber method or a gradient method (steepest descent or conjugate gradient). For the gradient methods, one can consider the standard $\Lp{2}$-gradient formula for which (as in the Landweber scheme) the values of $\bm{f}$ at the boundary should be available. For the Sobolev gradients, both the steepest descent and the conjugate gradient method need no assumptions on $\left.\bm{f}\right|_{\partial \Omega}$ and are in this view more flexible.

\subsection{Adaptations for \textbf{ISP1.2}}
The objective functional associated to the inverse problem \textbf{ISP1.2}, where the given measurement is $\bm{X}_T\in\Lp{2}$ and the corresponding operator is given by $N_T,$ see \Cref{subsec:adaptations12land} and \eqref{eq:ISP12measurementmapN}, is given by
\begin{equation}
    \label{eq:IT12}
    \mathcal{I}_\beta(\bm{f}) = \frac{1}{2} \nrm{ N_T(\bm{f})  - \bm{X}_T}^2 + \frac{\beta}{2}\nrm{\bm{f}}^2,  \quad \beta \geq 0.
\end{equation}
The noisy version is
\begin{equation}
    \label{eq:ITe12}
    \mathcal{I}^e_\beta(\bm{f}) = \frac{1}{2}\nrm{N_T(\bm{f}) - \bm{X}_T^e}^2 + \frac{\beta}{2}\nrm{\bm{f}}^2, \quad \beta\geq0,  \quad e \geq 0.
\end{equation}
As in \Cref{lem:MTcompactlinearL2L2}, the linear measurement map $N_T \colon \Lp{2}\to\Lp{2}$ is compact (i.e.\ continuous). Consequently, the arguments of \Cref{thm:JT11uniqemin} and \Cref{thm:JTe11uniquemin} can be reused to show the analogous results for \textbf{ISP1.2}.
\begin{theorem}\label{thm:ITE12uniquemins}
The minimisation problems
$$
\argmin_{\bm{f}\in\Lp{2}} \mathcal{I}_\beta(\bm{f}) \quad \text{ and } \quad  \argmin_{\bm{f} \in \Lp{2}} \mathcal{I}^e_\beta(\bm{f})
$$
for $ \beta >0$ and $ e \geq 0$ have unique solutions $\bm{f}$ and $\bm{f}_e$ in $\Lp{2},$ respectively. In case $\beta=0,$ unique solutions exist on each nonempty bounded closed convex subset.
Moreover, for any $\beta\geq 0$, let $\{e_n\}_n\subseteq{\mathbb{R}_{\geq0}}$ be a sequence with $e_n \searrow 0$ as $n\to \infty,$ and let $\{\bm{X}_T^{e_n}\}_n$ be the corresponding sequence of noisy measurements in $\Lp{2}$ such that $\bm{X}_T^{e_n}\to \bm{X}_T$ in $\Lp{2}$ as $n\to \infty.$ If the associated sequence $\{\bm{f}_{e_n}\}_n$ in $\Lp{2}$ of minimisers of $\mathcal{I}_\beta^{e_n}$ is uniformly bounded, then $\{\bm{f}_{e_n}\}_n$ converges weakly to a minimiser of $\mathcal{I}_\beta.$ 
\end{theorem}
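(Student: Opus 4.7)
The plan is to mimic the proofs of \Cref{thm:JT11uniqemin} and \Cref{thm:JTe11uniquemin} essentially verbatim, with $N_T$ in place of $M_T$ and $\bm{X}_T$ (resp.\ $\bm{X}_T^e$) in place of $\bm{\Xi}_T$ (resp.\ $\bm{\Xi}_T^e$). The authors already signal this transfer in the sentence preceding the statement, so the task reduces to verifying the properties of $N_T$ that drove the earlier arguments. Concretely, I would first establish that $N_T\colon\Lp{2}\to\Lp{2}$ defined in \eqref{eq:ISP12measurementmapN} is a compact linear operator: linearity follows at once from the linearity of the forward problem \eqref{eq:ISP11problemP1} and of the time integral, while compactness follows from the a priori estimate \eqref{eq:estimate under (i)} under the assumptions of \Cref{lem:forward_problem}(i), which shows that $\bm{f}\mapsto\int_0^T\widetilde{\bm{u}}(\cdot,t;\bm{f})\dt$ is bounded from $\Lp{2}$ into $\Hko{1}$; composing with the compact embedding $\Hko{1}\hookrightarrow\Lp{2}$ provided by Rellich--Kondrachov yields compactness.

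With this $N_T$ at hand, I would repeat the structural argument of \Cref{thm:JT11uniqemin}: the functional $\mathcal{I}_\beta$ (resp.\ $\mathcal{I}_\beta^e$) is continuous on $\Lp{2}$ as a composition of continuous maps, strictly convex on account of the linearity of $N_T$ and the strict convexity of $\nrm{\cdot}^2$, and weakly coercive whenever $\beta>0$, since then $\mathcal{I}_\beta(\bm{f})\ge\frac{\beta}{2}\nrm{\bm{f}}^2\to\infty$ as $\nrm{\bm{f}}\to\infty$. The main theorem on convex minimum problems \cite[Theorem~25.E]{Zeidler-IIB} then gives the existence and uniqueness of a minimiser over all of $\Lp{2}$. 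In the case $\beta=0$, weak coercivity is lost, but strict convexity and continuity persist, so $\mathcal{I}_0$ (resp.\ $\mathcal{I}_0^e$) is weakly sequentially lower semicontinuous on any nonempty bounded closed convex subset $\mathcal{A}\subseteq\Lp{2}$, and \cite[Theorem~25.C -- Corollary~25.15]{Zeidler-IIB} gives a unique minimiser on $\mathcal{A}$.

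For the convergence claim, the plan mirrors the second part of \Cref{thm:JTe11uniquemin}. From the assumed uniform boundedness of $\{\bm{f}_{e_n}\}_n$ in $\Lp{2}$, extract a weakly convergent subsequence $\bm{f}_{e_n}\weakto\bm{f}^\ast$ (when $\beta>0$, the uniqueness from the first part will force the full sequence to converge). Since $N_T$ is compact, $N_T(\bm{f}_{e_n})\to N_T(\bm{f}^\ast)$ strongly in $\Lp{2}$, and combining this with $\bm{X}_T^{e_n}\to\bm{X}_T$ in the triangle inequality
\[
\nrm{N_T(\bm{f}^\ast)-\bm{X}_T}\le\nrm{N_T(\bm{f}^\ast)-N_T(\bm{f}_{e_n})}+\nrm{N_T(\bm{f}_{e_n})-\bm{X}_T^{e_n}}+\nrm{\bm{X}_T^{e_n}-\bm{X}_T},
\]
together with weak lower semicontinuity of $\nrm{\cdot}^2$ applied to the penalty term, produces
\[
\mathcal{I}_\beta(\bm{f}^\ast)\le\liminf_{n\to\infty}\mathcal{I}_\beta^{e_n}(\bm{f}_{e_n})\le\liminf_{n\to\infty}\mathcal{I}_\beta^{e_n}(\bm{f})=\mathcal{I}_\beta(\bm{f})
\]
for every $\bm{f}\in\Lp{2}$, where the middle inequality uses that $\bm{f}_{e_n}$ is the minimiser of $\mathcal{I}_\beta^{e_n}$. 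Hence $\bm{f}^\ast$ minimises $\mathcal{I}_\beta$.

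No genuine obstacle is expected: the only potentially delicate point is the compactness of $N_T$, and that is essentially immediate from \eqref{eq:estimate under (i)} together with Rellich--Kondrachov. The only conceptual change relative to the \textbf{ISP1.1} case is that here the weaker regularity framework of \Cref{lem:forward_problem}(i) suffices, because $N_T$ involves a time integral of $\widetilde{\bm{u}}$ rather than an evaluation at $t=T$, and the bound on $\int_0^t\bm{u}(s)\ds$ in $\Hko{1}$ is built into \eqref{eq:estimate under (i)}.
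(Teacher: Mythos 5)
Your proposal is correct and follows essentially the same route as the paper: the authors likewise note the compactness of $N_T$ (in the paragraph preceding the theorem, by analogy with \Cref{lem:MTcompactlinearL2L2} and the bound on $\int_0^t\bm{u}(s)\ds$ in $\Hko{1}$ from \eqref{eq:estimate under (i)}), then invoke \cite[Theorem~25.E]{Zeidler-IIB} for $\beta>0$ and \cite[Theorem~25.C--Corollary~25.15]{Zeidler-IIB} for $\beta=0$, and finally rerun the weak-convergence argument of \Cref{thm:JTe11uniquemin} with $N_T$ and $\bm{X}_T^{e_n}$ in place of $M_T$ and $\bm{\Xi}_T^{e_n}$. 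Your writeup is, if anything, slightly more explicit about the subsequence extraction and the role of the triangle inequality.
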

\begin{proof}
    The functionals $\mathcal{I}_\beta$ and $\mathcal{I}_\beta^e$ are  strictly convex, continuous and weakly coercive if $\beta>0$, hence \cite[Theorem~25.E]{Zeidler-IIB} is applicable. If $\beta=0,$ we can instead apply \cite[Theorem~25.C-Corollary~25.15]{Zeidler-IIB}. If $\nrm{\bm{f}_e} <c,$ then $\bm{f}_e \rightharpoonup \bm{f}^\ast$ for some $\bm{f}^\ast\in\Lp{2}.$ Since $N_T$ is compact, the sequence of measurements $N_T(\bm{f}_e) \to N_T(\bm{f^\ast})$ strongly in $\Lp{2}.$ Scrutinising the proof of \Cref{thm:JTe11uniquemin} yields once more that for any $\bm{f}\in\Lp{2}$ with $\nrm{\bm{f}} <c$ it holds that $\mathcal{I}_\beta(\bm{f}^\ast)\leq \mathcal{I}_\beta(\bm{f}).$
\end{proof}

The sensitivity problem remains the same as for \textbf{ISP1.1}, i.e.\
\begin{equation}
    \label{eq:ISP12problemPS}
    \tag{$P_S^{12}$}
    \left\{
    \begin{array}{rll}
        \rho \partial_{tt}(\delta \bm{u}) -\nabla \cdot \sigma(\delta \bm{u}) + \gamma \nabla (\delta\theta) &= g(t)\delta_{\bm{f}}(\X)  & \quad \text{ in } Q_T \\
        \rho C_s \partial_t (\delta \theta) - \kappa \Delta (\delta \theta) - k\ast \Delta (\delta\theta) + T_0 \gamma \nabla \cdot \partial_t (\delta \bm{u}) &= 0&  \quad  \text{ in } Q_T \\
        \delta\bm{u}(\X,t) = \bm{0}, \, \delta\theta(\X,t) &=0 & \quad  \text{ in } \Sigma_T \\
        \delta\bm{u}(\X,0) = \bm{0}, \, \partial_t \delta\bm{u}(\X,0) =\bm{0}, \, \delta\theta(\X,0) &= 0 & \quad \text{ in } \Omega.
    \end{array} 
    \right.
\end{equation}
The adjoint problem remains a terminal-value problem, but now contains the deficit in the measurement as   a forcing term, i.e.\
\begin{equation}
    \label{eq:ISP12problemPA}
    \tag{$P_A^{12} $}
    \left\{
    \begin{array}{rll}
        \rho \partial_{tt}\bm{u}^\ast -\nabla \cdot \sigma(\bm{u}^\ast) + \gamma T_0 \nabla \partial_t \theta^\ast  &= %\int_0^T \bm{u}(\X,t;\bm{f})\dt 
        N_T(\bm{f})- \bm{X}_T(\X)  &  \text{ in } Q_T \\
        -\rho C_s \partial_t \theta^\ast - \kappa \Delta  \theta^\ast - K^\ast(\Delta \theta^\ast ) - \gamma \nabla \cdot \bm{u}^\ast &= 0&    \text{ in } Q_T \\
        \bm{u}^\ast(\X,t) = \bm{0}, \, \theta^\ast(\X,t) &=0 &   \text{ in } \Sigma_T \\
        \bm{u}^\ast(\X,T) = \bm{0}, \, \partial_t \bm{u}^\ast(\X,T) & =\bm{0}, \ \theta^\ast(\X,T) = 0 &  \text{ in } \Omega.
    \end{array} 
    \right.
\end{equation}

The relevant properties of $\mathcal{I}_\beta$ are collected in the next theorem.
\begin{theorem}\label{thm:gateauxIT11}
    The G\^{a}teaux directional derivative of $\mathcal{I}_\beta$ in the direction of the perturbation $\delta_{\bm{f}} \in\mathcal{H} \subseteq \Lp{2}$ at the function $\bm{f}\in\Lp{2}$ is given by
    \begin{multline}
        \label{eq:directionalderivativeI}
        \mathcal{I}^\prime_\beta(\bm{f};\delta_{\bm{f}}) = \int_\Omega \left(\int_0^T\delta\bm{u}(\X,t;\bm{f},\delta_{\bm{f}})\dt\right)\cdot\left(\int_0^T \bm{u}(\X,t;\bm{f})\dt - \bm{X}_T(\X)\right) \dX\\ + \beta \int_\Omega \bm{f}\cdot \delta_{\bm{f}}\dX.
    \end{multline}
     The mapping $\mathcal{I}_\beta^\prime(\bm{f}; \cdot) \colon \mathcal{H} \to \mathbb{R}$ is a bounded linear functional. Denoting with $\nabla_\mathcal{H}  \mathcal{I}_\beta[\bm{f}]$ the unique element in $\mathcal{H}$ such that
    $$
    \mathcal{I}^\prime_\beta(\bm{f};\delta_{\bm{f}}) = \langle \nabla_\mathcal{H} \mathcal{I}_\beta[\bm{f}], \delta_{\bm{f}}\rangle_{\mathcal{H}}, \quad \forall \delta_{\bm{f}} \in \mathcal{H},
    $$
    it holds that  % we find that 
    \begin{enumerate}
        \item in case $\mathcal{H} = \Lp{2}$, 
    \begin{equation} 
        \label{eq:gradientIL2}
        \nabla\mathcal{I}_\beta[\bm{f}] = \int_0^T g(t) \bm{u}^\ast(\cdot,t;\bm{f})\dt + \beta \bm{f} \in \Lp{2},
    \end{equation}
    where $(\bm{u}^\ast, \theta^\ast)$ is the solution to the adjoint problem \eqref{eq:ISP12problemPA};
    \item in case $\mathcal{H} = \Hk{1}$ equipped with \eqref{eq:weightedinnerproductHk}, the Sobolev gradient $\nabla_S \mathcal{I}_\beta \in \Hk{1}$ is the weak solution $\mathcal{K}$ to the problem
    \begin{equation}
    \label{eq:gradientellitpicbvpIT}
    \left\{
    \begin{array}{rll}
    - \nabla \cdot \left( r_1 \nabla  \mathcal{K}\right) + r_0 \mathcal{K} &= \nabla \mathcal{I}_\beta & \text{ in } \Omega \\
    \nabla\mathcal{K}\,\bm{n} &= \bm{0} & \text{ on } \Gamma.
    \end{array} 
    \right.
\end{equation}
%where $\bm{n}$ is the outward unit vector on the boundary $\Gamma.$
\end{enumerate}
\end{theorem}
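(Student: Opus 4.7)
The plan is to follow the template of \Cref{thm:gateauxJT11,thm:gradientsL2,thm:gradientsS}, with the modifications dictated by replacing the final-time observation by the time-averaged one and by using the adjoint system \eqref{eq:ISP12problemPA} in place of \eqref{eq:ISP11problemPA}.

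First, I compute the directional derivative by expanding $\mathcal{I}_\beta(\bm{f}+\varepsilon\delta_{\bm{f}}) - \mathcal{I}_\beta(\bm{f})$ using the linearity of $N_T$ (established as in \Cref{lem:MTcompactlinearL2L2}), dividing by $\varepsilon\neq 0$ and sending $\varepsilon\to 0$; this produces $\scal{N_T(\bm{f})-\bm{X}_T}{N_T(\delta_{\bm{f}})} + \beta\scal{\bm{f}}{\delta_{\bm{f}}}$. Writing $N_T(\delta_{\bm{f}}) = \int_0^T \delta\bm{u}(\cdot,t;\bm{f},\delta_{\bm{f}})\,\dt$ (cf.\ \Cref{rem:sensitivityproblemresebmlesdirect}) and applying Fubini yields \eqref{eq:directionalderivativeI}. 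Linearity of $\mathcal{I}'_\beta(\bm{f};\cdot)$ is immediate, and boundedness follows from Cauchy--Schwarz together with the a priori estimate of \Cref{lem:forward_problem}(i) applied to the sensitivity problem \eqref{eq:ISP12problemPS}, which gives $\nrm{N_T(\delta_{\bm{f}})} \leq C\nrm{\delta_{\bm{f}}}$.

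For statement (a), I take $(\delta\bm{u},\delta\theta)$ solving \eqref{eq:ISP12problemPS} and $(\bm{u}^\ast,\theta^\ast)$ solving \eqref{eq:ISP12problemPA}. Testing the hyperbolic equation of \eqref{eq:ISP12problemPS} against $\bm{u}^\ast$ and the parabolic equation against $\theta^\ast$, then integrating over $Q_T$ and performing integration by parts in time and Green's theorem in space (including the convolution identity for the memory term via the operator $K^\ast$), proceeds exactly as in the proof of \Cref{thm:gradientsL2}. The key simplification is that every time-boundary term now vanishes, since all initial data for $(\delta\bm{u},\delta\theta)$ and all terminal data for $(\bm{u}^\ast,\theta^\ast)$ are zero. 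Adding the two transformed identities and using \eqref{eq:ISP12problemPA} to collapse the coefficient of $\delta\bm{u}$ to $N_T(\bm{f})-\bm{X}_T$ and that of $\delta\theta$ to zero produces
\begin{equation*}
\int_0^T \scal{\delta\bm{u}(\cdot,t)}{N_T(\bm{f})-\bm{X}_T}\,\dt = \int_0^T g(t)\scal{\delta_{\bm{f}}}{\bm{u}^\ast(\cdot,t;\bm{f})}\,\dt.
\end{equation*}
Since $N_T(\bm{f})-\bm{X}_T$ does not depend on $t$, Fubini on the left recovers the first term of \eqref{eq:directionalderivativeI}, and invoking \eqref{eq:riesz} in $\Lp{2}$ identifies \eqref{eq:gradientIL2}. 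Note that the factor $-1/\rho$ appearing in \Cref{thm:gradientsL2} is absent here because the measurement deficit now enters the adjoint hyperbolic equation as a distributed right-hand side rather than via the terminal velocity $\partial_t\bm{u}^\ast(\cdot,T)$; no $\rho$-weighted boundary term survives the time integration by parts.

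Statement (b) is then an immediate Riesz representation argument, identical to the proof of \Cref{thm:gradientsS}: the functional $\delta_{\bm{f}}\mapsto \scal{\nabla\mathcal{I}_\beta[\bm{f}]}{\delta_{\bm{f}}}$ is continuous from $\HHi^1(\Omega,r_0,r_1)\hookrightarrow \Lp{2}$ into $\mathbb{R}$, and its Riesz representer $\nabla_S\mathcal{I}_\beta[\bm{f}]$ with respect to the weighted inner product \eqref{eq:weightedinnerproductHk} is by definition the weak solution of \eqref{eq:gradientellitpicbvpIT}. The main technical obstacle is the bookkeeping of the integration-by-parts boundary and time-boundary terms in step (a): one has to verify carefully that each of them is annihilated by a homogeneous condition of either \eqref{eq:ISP12problemPS} or \eqref{eq:ISP12problemPA}, so that only the clean identity involving $N_T(\bm{f})-\bm{X}_T$ and $g(t)\bm{u}^\ast$ remains.
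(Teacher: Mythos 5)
Your proposal is correct and follows essentially the same route as the paper's proof: the same expansion of $\mathcal{I}_\beta(\bm{f}+\varepsilon\delta_{\bm{f}})-\mathcal{I}_\beta(\bm{f})$ for the G\^{a}teaux derivative, the same sensitivity/adjoint duality with all time-boundary terms vanishing (since the sensitivity problem has zero initial data and the adjoint \eqref{eq:ISP12problemPA} has zero terminal data), and the same Riesz-representation step for the Sobolev gradient. Your observation that the $-1/\rho$ factor of \Cref{thm:gradientsL2} disappears because the measurement deficit now enters as a distributed forcing rather than through the terminal velocity is exactly the right accounting.
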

\begin{proof} 
 Fixing  $\bm{f}\in \Lp{2}$ and $\delta_{\bm{f}} \in \mathcal{H}$, we find that
\begin{align*}
\mathcal{I}_\beta(\bm{f} + \varepsilon\delta_{\bm{f}}) - \mathcal{I}_\beta(\bm{f}) &= \epsilon\left[ \scal{N_T({\bm{f}}) - \bm{X}_T}{N_T(\delta_{\bm{f}})} + \beta \scal{\bm{f}}{\delta_{\bm{f}}}\right] \\ & \qquad + \frac{\varepsilon^2}{2} \left(\nrm{N_T(\delta_{\bm{f}})}^2 + \beta\nrm{\delta_{\bm{f}}}^2\right).
\end{align*}
Dividing by $\varepsilon \neq 0$ and passing to the limit $\varepsilon\to 0$ we obtain the expression \eqref{eq:directionalderivativeI}. The map $\mathcal{I}^\prime_\beta(\bm{f};\cdot) \colon\mathcal{H} \to \mathbb{R}$ is linear and bounded as
\begin{align*}
    \abs{\mathcal{I}_\beta^\prime(\bm{f};\delta_{\bm{f}})}&\leq \nrm{N_T(\bm{f})-\bm{X}_T} \nrm{N_T(\delta_{\bm{f}})} + \beta \nrm{\bm{f}} \nrm{\delta_{\bm{f}}} \\
    & \leq \left(C \nrm{N_T(\bm{f})-\bm{X}_T} + \beta\nrm{\bm{f}}\right) \nrm{\delta_{\bm{f}}},
\end{align*}
by appealing to the fact that $N_T$ is bounded. 

Specialising to the case $\mathcal{H} = \Lp{2},$ the starting point is as in \Cref{thm:gradientsL2}. Fix $\bm{f}, \delta_{\bm{f}} \in \Lp{2}$ and let $(\delta\bm{u},\delta\theta)$ be the solution to the sensitivity problem \eqref{eq:ISP12problemPS} and $(\bm{u}^\ast, \theta^\ast)$ be the solution to the adjoint problem \eqref{eq:ISP12problemPA}. Reconsider \eqref{eq:hyperbolicsensadj}-\eqref{eq:parabolicsensadj} and apply Green's theorem and partial integration in time, now considering zero initial conditions for $(\delta\bm{u},\delta\theta)$ and zero final time conditions for $(\bm{u}^\ast,\theta^\ast).$ This yields
\begin{multline*}
    \rho \int_0^T \scal{\delta\bm{u}}{\partial_{tt} \bm{u}^\ast} \dt - \int_0^T \scal{\delta\bm{u}}{\nabla\cdot \sigma(\bm{u}^\ast)} \dt - \gamma \int_0^T \scal{\delta \theta}{\nabla \cdot \bm{u}^\ast}\dt \\ = \scal{ \delta_{\bm{f}}}{\int_0^T g(t)\bm{u}^\ast \dt} 
\end{multline*}
and
\begin{multline*}
    -\rho C_s \int_0^T \scal{\delta\theta}{\partial_t\theta^\ast}\dt - \kappa \int_0^T \scal{\delta\theta}{\Delta \theta^\ast}\dt - \int_0^T \scal{\delta\theta}{K^\ast(\Delta \theta^\ast)} \dt \\+ \gamma T_0 \int_0^T \scal{\delta\bm{u}}{\nabla \partial_t \theta^\ast}\dt = 0.
\end{multline*}
Adding these expressions and collecting terms together gives 
\begin{multline*}
    \scal{\delta_{\bm{f}}}{ \int_0^T g(t)\bm{u}^\ast(\cdot,t;\bm{f}) \dt} = 
    \int_0^T \scal{\delta\bm{u}}{ \rho \partial_{tt}\bm{u}^\ast - \nabla \cdot \sigma(\bm{u}^\ast) + \gamma T_0 \nabla\partial_t \theta^\ast}\dt \\\qquad\qquad+ \int_0^T\scal{\delta\theta}{-\rho C_s \partial_t \theta^\ast - \kappa \Delta \theta^\ast - K^\ast(\Delta\theta^\ast) - \gamma \nabla \cdot\bm{u}^\ast}\dt. 
\end{multline*}
Hence, by the definition of \eqref{eq:ISP12problemPA}, we have that 
\begin{align*}
    &\scal{\delta_{\bm{f}}}{ \int_0^T g(t)\bm{u}^\ast(\cdot,t;\bm{f}) \dt} \\&\quad =\int_0^T \scal{\delta\bm{u}(\cdot,t;\bm{f},\delta_{\bm{f}})}{\int_0^T \bm{u}(\X,s;\bm{f})\ds - \bm{X}_T}\dt \\ 
    &\quad = \int_\Omega \left(\int_0^T\delta\bm{u}(\X,t;\bm{f},\delta_{\bm{f}})\dt\right)\cdot\left(\int_0^T \bm{u}(\X,t;\bm{f}) \dt - \bm{X}_T(\X)\right) \dX.
\end{align*}
Therefore, in the case $\mathcal{H}=\Lp{2}$ we obtain the expression \eqref{eq:gradientIL2} by using \eqref{eq:directionalderivativeI}. The transition to the Sobolev version follows the same lines  as before by  realising
$$
\mathcal{I}^\prime_\beta(\bm{f};\delta_{\bm{f}}) = \langle \nabla \mathcal{I}_\beta [\bm{f}], \delta_{\bm{f}} \rangle_{\Lp{2}} = \langle \nabla_{S}\mathcal{I}_\beta[\bm{f}], \delta_{\bm{f}}\rangle_{\HHi^{1}(\Omega,r_0,r_1)},
$$
using the weighted inner product \eqref{eq:weightedinnerproductHk} and the weak formulation of \eqref{eq:gradientellitpicbvpIT}.
\end{proof} 

Using these formulas for the gradients $\nabla_{\mathcal{H}} \mathcal{I}_\beta,$ the algorithms for the minimisation problem are mutatis mutandis in the same fashion as those in \Cref{subsec:gradientalgorithms}. Let us note that the stepsize $\tau_n^\ast$ in the  steepest decent gradient scheme now takes the form
$$
\tau_n^\ast =   \frac{\scal{N_T(\bm{f}_{n-1}) - \bm{X}_T}{N_T(\nabla_{\mathcal{H}} \mathcal{I}_\beta[\bm{f}_{n-1}])} + \beta\scal{\bm{f}_{n-1}}{\nabla_\mathcal{H} \mathcal{I}_\beta[\bm{f}_{n-1}]}}{\nrm{N_T(\nabla_\mathcal{H}\mathcal{I}_\beta[\bm{f}_{n-1}])}^2+ \beta\nrm{\nabla_\mathcal{H} \mathcal{I}_\beta[\bm{f}_{n-1}]}^2}.
$$

\subsection{Adaptations for \textbf{ISP2}}
Concerning the inverse problem \textbf{ISP2} to reconstruct the space dependent heat source $f\in\lp{2}$ in \eqref{eq:ISP2problemP1}, we work with the functional
\begin{equation}
    \label{eq:GT2}
    \mathcal{G}_\beta(f) = \frac{1}{2}\nrm{\int_0^T \theta(\cdot,t;f)\dt - \Psi_T }^2 + \frac{\beta}{2}\nrm{f}^2, \quad \beta \geq 0.
\end{equation}
Where $\Psi_T \in\lp{2}$ as the remainder of the integral measurement $\psi_T$ is the available data and $(\bm{u}(\cdot;f),\theta(\cdot;f))$ is the solution to the direct problem \eqref{eq:ISP2problemP1} with given right-hand side $f.$ Similarly as before, under the conditions of \Cref{thm:isp2}, the linear operator $ P_T\colon \lp{2} \to \lp{2}$ defined in \eqref{eq:ISP2measurementmapP} is  compact. This enables us to conclude the existence and uniqueness of a solution to the (noisy) minimisation problem associated to the cost functionals $\mathcal{G}_\beta$ and $\mathcal{G}_\beta^e,$ with
\begin{equation}
    \label{eq:GTE2}
    \mathcal{G}^e_\beta(f) = \frac{1}{2} \nrm{P_T(f) - \Psi_T^e}^2 + \frac{\beta}{2} \nrm{f}^2,\quad \beta \geq 0,\quad e \geq 0.
\end{equation}
\begin{theorem}\label{thm:GTE12uniquemins}
The minimisation problems
$$
\argmin\limits_{f \in \lp{2}} \mathcal{G}_\beta(f) \quad \text{ and } \quad \argmin\limits_{f \in \lp{2}} \mathcal{G}^e_\beta(f)
$$
for $\beta>0$ and $e \geq 0$ have unique solutions in $\lp{2}$. In case $\beta = 0,$ unique solutions exist in nonempty bounded closed convex subsets. 
 Moreover, for any $\beta\geq0,$ let $\{e_n\}_n\subseteq{\mathbb{R}_{\geq0}}$ be a sequence with $e_n \searrow 0$ as $n\to \infty,$ and let $\{\Psi_T^{e_n}\}_n$ be the corresponding sequence of noisy measurements in $\lp{2}$ such that $\Psi_T^{e_n}\to \Psi_T$ in $\lp{2}$ as $n\to \infty.$ If the associated sequence $\{f_{e_n}\}_n$ in $\lp{2}$ of minimisers of $\mathcal{G}_\beta^{e_n}$ is uniformly bounded, then $\{f_{e_n}\}_n$ converges weakly to a minimiser of $\mathcal{G}_\beta.$
\end{theorem}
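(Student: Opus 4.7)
The proof will proceed in exact parallel with the arguments given for Theorems \ref{thm:JT11uniqemin}, \ref{thm:JTe11uniquemin} and \ref{thm:ITE12uniquemins}, the only differences being in the operator involved and the relevant a priori estimate. The first step is to verify that the measurement operator $P_T \colon \lp{2} \to \lp{2}$ defined in \eqref{eq:ISP2measurementmapP} is linear and compact. Linearity is immediate from the linearity of \eqref{eq:ISP2problemP1}. For compactness, the a priori estimate \eqref{eq:estimate under (ii)} of \Cref{lem:forward_problem}(ii) gives that $\widetilde{\theta}(\cdot,t;f)$ and hence its time integral is bounded in $\hko{1}$ by $\nrm{gf}_{\lpkIX{2}{\lp{2}}} \leq C\nrm{f}$, so $P_T$ maps $\lp{2}$ boundedly into $\hko{1}$; the Rellich--Kondrachov compact embedding $\hko{1}\hookrightarrow \lp{2}$ then yields compactness of $P_T$ as a map $\lp{2}\to\lp{2}$.

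With $P_T$ continuous and linear, the functionals $\mathcal{G}_\beta$ and $\mathcal{G}_\beta^e$ are continuous on $\lp{2}$ as a composition of continuous maps. Strict convexity is established verbatim as in \Cref{thm:JT11uniqemin}: for distinct $f_1,f_2\in\lp{2}$ and $a\in(0,1)$, the linearity of $P_T$ and strict convexity of $\nrm{\cdot}^2$ give
\[
\mathcal{G}_\beta^e(af_1 + (1-a)f_2) < a\mathcal{G}_\beta^e(f_1) + (1-a)\mathcal{G}_\beta^e(f_2),
\]
and similarly for $\mathcal{G}_\beta$. For $\beta>0$ the penalty term guarantees $\mathcal{G}_\beta(f), \mathcal{G}_\beta^e(f) \to \infty$ as $\nrm{f}\to\infty$, so both are weakly coercive, and the existence and uniqueness of a minimiser in $\lp{2}$ then follows from \cite[Theorem~25.E]{Zeidler-IIB}. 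When $\beta = 0$, weak coercivity may fail, but on any nonempty bounded closed convex set $\mathcal{A}\subset\lp{2}$ the functional remains strictly convex and continuous, hence weakly sequentially lower semicontinuous; \cite[Theorem~25.C--Corollary~25.15]{Zeidler-IIB} then yields the existence and uniqueness of a minimiser on $\mathcal{A}$.

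For the convergence of minimisers as the noise level vanishes, suppose $\{f_{e_n}\}_n$ is the uniformly bounded sequence of minimisers of $\mathcal{G}_\beta^{e_n}$. Extracting a weakly convergent subsequence (not relabelled), we have $f_{e_n} \weakto f^\ast$ in $\lp{2}$ for some $f^\ast \in \lp{2}$. Compactness of $P_T$ then yields strong convergence $P_T(f_{e_n}) \to P_T(f^\ast)$ in $\lp{2}$. Combining this with the hypothesis $\Psi_T^{e_n}\to \Psi_T$ in $\lp{2}$ and the triangle inequality
\[
\nrm{P_T(f^\ast) - \Psi_T} \leq \nrm{P_T(f^\ast)-P_T(f_{e_n})} + \nrm{P_T(f_{e_n}) - \Psi_T^{e_n}} + \nrm{\Psi_T^{e_n} - \Psi_T},
\]
together with the weak lower semicontinuity of the $\lp{2}$-norm, we obtain
\[
\mathcal{G}_\beta(f^\ast) \leq \liminf_{n\to\infty} \mathcal{G}_\beta^{e_n}(f_{e_n}) \leq \liminf_{n\to\infty} \mathcal{G}_\beta^{e_n}(f) = \mathcal{G}_\beta(f)
\]
for every $f \in \lp{2}$, where the middle inequality uses the minimising property of $f_{e_n}$. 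Hence $f^\ast$ minimises $\mathcal{G}_\beta$, and a standard subsequence-uniqueness argument shows the whole sequence converges weakly. The only substantive new point is the compactness of $P_T$; all other steps are direct transcriptions of the arguments already given in \Cref{thm:JTe11uniquemin} and \Cref{thm:ITE12uniquemins}.
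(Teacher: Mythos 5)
Your proposal is correct and follows essentially the same route as the paper, which simply refers back to the arguments of \Cref{thm:JTe11uniquemin} and \Cref{thm:ITE12uniquemins} together with the observation that $P_T(f_{e_n}) \to P_T(f^\ast)$ strongly whenever $f_{e_n} \rightharpoonup f^\ast$; your write-up merely spells out the compactness of $P_T$ via \eqref{eq:estimate under (ii)} and Rellich--Kondrachov, the convexity/coercivity/Zeidler step, and the $\liminf$ chain, all of which match the paper's intended argument.
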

\begin{proof}
Repeat the arguments from \Cref{thm:JTe11uniquemin} (or \Cref{thm:ITE12uniquemins}) now noting that $P_T(f_e) \to P_T(f)$ strongly in $\lp{2}$ whenever $f_e \rightharpoonup f^\ast$ weakly for  $f^\ast \in \lp{2}.$
\end{proof} 

The sensitivity problem for \textbf{ISP2} is now formulated as
\begin{equation}
    \label{eq:ISP2problemPS}
    \tag{${P}_S^{2}$}
    \left\{
    \begin{array}{rll}
    \rho \partial_{tt}(\delta\bm{u})  - \nabla \cdot\sigma(\delta\bm{u}) + \gamma \nabla \theta &= 0  & \quad \text{ in } Q_T \\
        %\rho \partial_{tt}(\delta\bm{u})  - \mu\Delta(\delta \bm{u})  - (\lambda + \mu) \nabla (\nabla \cdot \delta\bm{u}) + \gamma \nabla \theta &= 0  & \quad \text{ in } Q_T \\
        \rho C_s \partial_t \theta - \kappa \Delta \theta - k\ast \Delta \theta + T_0 \gamma \nabla \cdot \partial_t (\delta\bm{u}) &= g(t)\delta_f(\X)&  \quad  \text{ in } Q_T \\
        \bm{u}(\X,t) = \bm{0}, \quad \theta(\X,t) &=0 & \quad  \text{ in } \Sigma_T \\
        \bm{u}(\X,0) = \bm{0}, \quad \partial_t \bm{u}(\X,0) =\bm{0}, \quad \theta(\X,0) &= 0 & \quad \text{ in } \Omega.
    \end{array} 
    \right.
\end{equation}
The remainder term $\int_0^T \theta(\X,t;f)\dt - \Psi_T(\X)$ now acts as a forcing term in the role of the heat source in the (terminal-value) adjoint problem for $(\bm{u}^\ast,\theta^\ast),$ i.e.
\begin{equation}
    \label{eq:ISP2problemPA}
    \tag{${P}_A^{2}$}
    \left\{ 
    \begin{array}{rll}
    \rho \partial_{tt} \bm{u}^\ast - \nabla \cdot \sigma(\bm{u}^\ast) + \gamma T_0 \nabla \partial_t \theta^\ast &= \bm{0} &  \text{ in } Q_T \\ 
    -\rho C_s \partial_t \theta^\ast -\kappa \Delta \theta^\ast - K^\ast(\Delta \theta^\ast) - \gamma \nabla \cdot \bm{u}^\ast
    &=P_T(f) - \Psi_T(\X) & \text{ in } Q_T \\
    % &= \int_0^T \theta(\X,t;f)\dt &\!\!\!\!\!- \Psi_T(\X) \\
    %&&  \text{ in } Q_T \\
    \bm{u}^\ast(\X,t) = \bm{0}, \quad \theta^\ast(\X,t) &= 0 &  \text{ in } \Sigma_T \\
    \bm{u}^\ast(\X,T) = \bm{0},  \quad \partial_t \bm{u}(\X,T) = \bm{0}, \quad \theta^\ast(\X,T) &= 0 & \text{ in } \Omega.
    \end{array} 
    \right. 
\end{equation}
Finally, the G\^{a}teaux derivative and the gradients of $\mathcal{G}_\beta$ are listed in the following theorem.
\begin{theorem}\label{thm:gateauxGT2}
    The G\^{a}teaux directional derivative of $\mathcal{G}_\beta$ in the direction of the perturbation $\delta_f \in \mathcal{H}\subseteq \lp{2}$ at the function $f \in\lp{2}$ is given by
    \begin{align}
        \label{eq:directionalderivativeG}
        \mathcal{G}_\beta^\prime(f;\delta_{f})& = \int_\Omega \left(\int_0^T  \delta\theta(\X,t;f,\delta_f) \dt \right)\left(\int_0^T \theta(\X,t;f)\dt - \Psi_T(\X)\right) \dX\nonumber\\ & \qquad  + \beta \int_\Omega f\delta_f \dX. 
    \end{align}
    The mapping  $\mathcal{G}_\beta^\prime(f;\cdot) \colon \mathcal{H}\to\mathbb{R}$ is a bounded linear functional. Denoting by $\nabla_\mathcal{H} \mathcal{G}_\beta[f]$ the unique element in $\mathcal{H}$ such that 
    $$
    \mathcal{G}_\beta^\prime(f;\delta_f) = \langle \nabla_{\mathcal{H}} \mathcal{G}_\beta[f], \delta_f\rangle_{\mathcal{H}}, \quad \forall \delta_{f} \in\mathcal{H},
    $$
    it holds that
    \begin{enumerate}
        \item in case $\mathcal{H} = \lp{2},$
        \begin{equation}
            \label{eq:gradientGL2}
            \nabla\mathcal{G}_\beta[f] = \int_0^T g(t) \theta^\ast(\cdot,t;f)\dt + \beta f \in \lp{2},
        \end{equation}
        where $(\bm{u}^\ast, \theta^\ast)$ is the solution to the adjoint problem \eqref{eq:ISP2problemPA};

        \item in case $\mathcal{H} = \hk{1},$ equipped with the regularised inner product
        \begin{equation}\label{eq:weightedinnerproducthk}
        \langle w,z \rangle_{\Hi^1(\Omega, r_0, r_1)} := \scal{r_0 w}{z} + \scal{r_1 \nabla w}{\nabla z}, \quad \forall w,z\in\hk{1},
        \end{equation} 
        where $r_0$ and $r_1$ are as in \eqref{eq:r0r1weights}, the Sobolev gradient $\nabla_S\mathcal{G}_\beta \in \hk{1}$  is the weak solution $\mathcal{K}$ to the problem
    \begin{equation} 
    \label{eq:gradientellitpicbvpGT}
    \left\{
    \begin{array}{rll}
    - \nabla \cdot \left( r_1 \nabla  \mathcal{K}\right) + r_0 \mathcal{K} &= \nabla \mathcal{G}_\beta & \text{ in } \Omega \\
    \nabla \mathcal{K}\cdot \bm{n} &= 0 & \text{ on } \Gamma.
    \end{array} 
    \right.
\end{equation}
    \end{enumerate}
\end{theorem}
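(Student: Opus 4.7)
The plan is to mirror the structure of the proofs of \Cref{thm:gateauxJT11}--\Cref{thm:gradientsS} and \Cref{thm:gateauxIT11}, making the obvious adjustments to account for the fact that in \textbf{ISP2} the unknown source $f$ enters the heat equation (not the elastic equation), that the measurement is the time-averaged temperature, and that consequently the roles played by $\bm{u}^\ast$ and $\theta^\ast$ are swapped relative to \textbf{ISP1.1}.

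\textbf{Step 1: Gâteaux derivative.} For fixed $f\in\lp{2}$ and $\delta_f\in\mathcal{H}\subseteq\lp{2},$ write $P_T$ as in \eqref{eq:ISP2measurementmapP}, expand the square and use the linearity of $P_T$ (see the comment before \Cref{thm:GTE12uniquemins}) to get
\begin{equation*}
\mathcal{G}_\beta(f+\varepsilon\delta_f)-\mathcal{G}_\beta(f)=\varepsilon\left[\scal{P_T(f)-\Psi_T}{P_T(\delta_f)}+\beta\scal{f}{\delta_f}\right]+\frac{\varepsilon^2}{2}\left(\nrm{P_T(\delta_f)}^2+\beta\nrm{\delta_f}^2\right).
\end{equation*}
Dividing by $\varepsilon$ and letting $\varepsilon\to 0$ yields \eqref{eq:directionalderivativeG}, after identifying $P_T(\delta_f)=\int_0^T\delta\theta(\cdot,t;f,\delta_f)\dt$ (which is \Cref{rem:sensitivityproblemresebmlesdirect} applied to \textbf{ISP2}). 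Linearity of $\mathcal{G}_\beta^\prime(f;\cdot)$ is obvious; for boundedness, the a priori estimate \eqref{eq:estimate under (ii)} (or \eqref{eq:estimate under (i)}, whichever is available under the standing hypotheses) gives $\nrm{P_T(\delta_f)}\leq C\nrm{\delta_f},$ so
\begin{equation*}
\abs{\mathcal{G}_\beta^\prime(f;\delta_f)}\leq\left(C\nrm{P_T(f)-\Psi_T}+\beta\nrm{f}\right)\nrm{\delta_f}.
\end{equation*}

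\textbf{Step 2: $\lp{2}$-gradient via the adjoint problem.} Take $(\delta\bm{u},\delta\theta)$ to solve \eqref{eq:ISP2problemPS} and $(\bm{u}^\ast,\theta^\ast)$ to solve \eqref{eq:ISP2problemPA}. Multiply the hyperbolic equation of \eqref{eq:ISP2problemPS} by $\bm{u}^\ast$ and the parabolic one by $\theta^\ast,$ integrate over $Q_T,$ and perform integration by parts in time together with Green's theorem exactly as in the proof of \Cref{thm:gradientsL2}. Here the only source term appears in the heat equation, so the right-hand side produces
\begin{equation*}
\int_0^T g(t)\scal{\delta_f}{\theta^\ast}\dt=\scal{\delta_f}{\int_0^T g(t)\theta^\ast(\cdot,t;f)\dt}.
\end{equation*}
All boundary terms vanish thanks to the homogeneous Dirichlet conditions, the initial conditions on $(\delta\bm{u},\delta\theta),$ and the final-time conditions on $(\bm{u}^\ast,\theta^\ast);$ the convolution term passes to $K^\ast$ by Fubini. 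Summing the two identities and collecting $\delta\bm{u}$ and $\delta\theta$ terms, the bracketed operators acting on them are precisely the left-hand sides of the equations in \eqref{eq:ISP2problemPA}. The hyperbolic bracket vanishes, while the parabolic bracket equals the forcing $P_T(f)-\Psi_T.$ The identity then reads
\begin{equation*}
\scal{\delta_f}{\int_0^T g(t)\theta^\ast(\cdot,t;f)\dt}=\int_\Omega\left(\int_0^T\delta\theta(\X,t;f,\delta_f)\dt\right)\left(\int_0^T\theta(\X,t;f)\dt-\Psi_T(\X)\right)\dX.
\end{equation*}
Comparing with \eqref{eq:directionalderivativeG} and invoking the Riesz representer characterisation in $\lp{2}$ yields \eqref{eq:gradientGL2}. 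The main technical obstacle is bookkeeping of all boundary terms in the integration by parts (especially the sign flip in $-\rho C_s\partial_t\theta^\ast$ which is precisely what makes the terminal value problem well-posed backwards in time) and ensuring that, after summation, the cross terms involving $\gamma\nabla(\delta\theta)\cdot\bm{u}^\ast$ and $T_0\gamma\nabla\cdot\partial_t(\delta\bm{u})\,\theta^\ast$ combine with the adjoint cross terms $\gamma T_0\nabla\partial_t\theta^\ast$ and $-\gamma\nabla\cdot\bm{u}^\ast$ to cancel; this is exactly the reason the adjoint system \eqref{eq:ISP2problemPA} was set up in that particular form.

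\textbf{Step 3: Sobolev gradient.} Since $\hk{1}\subset\lp{2},$ the bounded linear functional $\mathcal{G}_\beta^\prime(f;\cdot)$ on $\Hi^1(\Omega,r_0,r_1)$ (which is Hilbert thanks to the two-sided bounds on $r_0,r_1$ in \eqref{eq:r0r1weights}) admits a unique Riesz representer $\nabla_S\mathcal{G}_\beta[f]\in\hk{1}.$ Writing the two representation identities
\begin{equation*}
\langle\nabla_S\mathcal{G}_\beta[f],\delta_f\rangle_{\Hi^1(\Omega,r_0,r_1)}=\langle\nabla\mathcal{G}_\beta[f],\delta_f\rangle_{\lp{2}}\qquad\forall\delta_f\in\hk{1},
\end{equation*}
and expanding the left-hand side with \eqref{eq:weightedinnerproducthk} is exactly the weak formulation of \eqref{eq:gradientellitpicbvpGT} with Neumann boundary condition $\nabla\mathcal{K}\cdot\bm{n}=0,$ as the boundary term from Green's identity on the $r_1\nabla\mathcal{K}$ part does not appear on the right-hand side. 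This concludes the proof.
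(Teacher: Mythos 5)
Your proposal is correct and follows essentially the same route as the paper's proof: the same expansion of $\mathcal{G}_\beta(f+\varepsilon\delta_f)-\mathcal{G}_\beta(f)$ via $P_T$ for the G\^ateaux derivative and boundedness, the same sensitivity/adjoint pairing with integration by parts to identify the $\lp{2}$-gradient, and the same Riesz-representer argument in $\Hi^1(\Omega,r_0,r_1)$ for the Sobolev gradient. No discrepancies worth noting.
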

\begin{proof}
    Fix  $f\in\lp{2}$  and $\delta_f \in \mathcal{H}.$ We have that
    \begin{align*}
    \mathcal{G}_\beta(f + \varepsilon\delta_f) - \mathcal{G}_\beta(f) &= \varepsilon\left[\scal{P_T(f) - \Psi_T}{P_T(\delta_f)} + \beta\scal{f}{\delta_f} \right]\\ & \qquad + \frac{\varepsilon^2}{2} \left(\nrm{P_T(\delta_f)}^2 +\beta \nrm{\delta_f}^2\right), 
    \end{align*}
    yielding \eqref{eq:directionalderivativeG} after division by $\varepsilon\neq 0$ and limit considerations. Therefore, the map $\mathcal{G}_\beta^\prime(f;\cdot) \colon\mathcal{H} \to\mathbb{R}$ is linear and bounded as 
    \begin{align*}
   \abs{ \mathcal{G}_\beta^\prime(f;\delta_f) } & \leq \left( C \nrm{P_T(f) - \Psi_T} + \beta \nrm{f}\right) \nrm{\delta_f},
    \end{align*}
    since $P_T$ is bounded. 

    We argue as before for the gradients using $(\delta\bm{u},\delta\theta)$ and $(\bm{u}^\ast, \theta^\ast)$ as solutions to \eqref{eq:ISP2problemPS} and \eqref{eq:ISP2problemPA} respectively, we now recover the relation
    \begin{align*}
    &\int_\Omega \left(\int_0^T\delta\theta(\X,t;f)\dt\right)\left( \int_0^T \theta(\X,t;f)\dt - \Psi_T(\X)\right) \dX \\
    &\quad = \int_0^T \scal{\delta \theta(\cdot,t;f,\delta_f)}{\int_0^T \theta(\cdot,s;f)\ds - \Psi_T}\dt  \\
    &\quad = \int_0^T \scal{\delta\bm{u}}{ \rho \partial_{tt}\bm{u}^\ast - \nabla \cdot \sigma(\bm{u}^\ast) + \gamma T_0 \nabla \partial_t \theta^\ast }\dt\\ & \qquad \qquad + \int_0^T \scal{\delta \theta}{- \rho C_s \theta^\ast - \kappa \nabla \theta^\ast - K^\ast(\Delta \theta^\ast) - \gamma \nabla \cdot \bm{u}^\ast}\dt \\ 
    &\quad = \scal{\delta_f}{ \int_0^T g(t) \theta^\ast(\cdot,t;f)\dt},
    \end{align*}
    which reveals the formula for $\nabla\mathcal{G}_\beta$ in \eqref{eq:gradientGL2} by using \eqref{eq:directionalderivativeG} for the case $\mathcal{H}=\lp{2}.$ The formulation \eqref{eq:gradientellitpicbvpGT} follows by applying Green's theorem to find the variational formulation of \eqref{eq:gradientellitpicbvpGT} and concluding in a similar fashion as before.
\end{proof}
The stepsize in the gradient method steepest decent now takes the form
$$
\tau_n^\ast =   \frac{\scal{P_T(f_{n-1}) - \Psi_T}{P_T(\nabla_{\mathcal{H}} \mathcal{G}_\beta[f_{n-1}])} + \beta\scal{f_{n-1}}{\nabla_\mathcal{H} \mathcal{G}_\beta[f_{n-1}]}}{\nrm{P_T(\nabla_\mathcal{H}\mathcal{G}_\beta[f_{n-1}])}^2+ \beta\nrm{\nabla_\mathcal{H} \mathcal{G}_\beta[f_{n-1}]}^2},
$$
and the steps to execute the schemes are analogously as before.
%%%%%%%%%%%%%%%%%%%%%%%%%%%%%%%%%%%%%%%%%%%%%%%%%%%%%%%%%%%%%%%%%%%%%%%%%%%%%%%%%%%%%%%%%%%%%%%%%%%%%%%%%%%%%%
%%%%%%%%%%%%%%%%%%%%%%%%%%%%%%%%%%%%%%%%%%%%%%%%%%%%%%%%%%%%%%%%%%%%%%%%%%%%%%%%%%%%%%%%%%%%%%%%%%%%%%%%%%%%%%

\section{Numerical results and discussions} \label{sec:numerical}
In this section, we focus our attention on the numerical implementation of the proposed algorithms and their application to a numerical example. We start by discretising the adjoint problems and providing a scheme to solve the elliptic boundary value problems needed for the computation of the Sobolev gradients. Afterwards, we describe the setup of the experiment, collecting relevant pieces of data. In the last part, we discuss our findings and the performance of the aforementioned approaches to solve the inverse problems. 

\subsection{Discretisation of the adjoint problems}\label{subsec:discretisationadjointproblem}
The time discretisation for the direct problem \eqref{eq:problem} is given by \eqref{eq:weakudisc}-\eqref{eq:weakthetadisc}, as described in \Cref{subsec:timediscretisation}. It gives an iterative way to compute the solution to the direct problem at discrete time steps.  This approach  is readily adapted to produce  a discretisation scheme for the considered adjoint problems,   whose general form is given by
\begin{equation}
    \label{eq:generaladjointproblem}
    \left\{ 
    \begin{array}{rll}
    \rho \partial_{tt} \bm{u}^\ast - \nabla \cdot \sigma(\bm{u}^\ast) + \gamma T_0 \nabla \partial_t \theta^\ast &= \bm{p}^\ast(\X,t) & \quad \text{ in } Q_T \\
    - \rho C_s \partial_t \theta^\ast - \kappa \Delta \theta^\ast - K^\ast(\Delta \theta^\ast) - \gamma \nabla \cdot \bm{u}^\ast &= h^\ast(\X,t)  & \quad \text{ in } Q_T \\
    \bm{u}^\ast(\X,t) = \bm{0}, \quad \theta^\ast(\X,t) &= 0 &\quad \text{ in } \Sigma_T  \\
    \bm{u}^\ast(\X,T) = \overline{\bm{u}}^\ast_0(\X), \quad \partial_t \bm{u}^\ast(\X,T) = \overline{\bm{u}}^\ast_1(\X), \quad \theta^\ast(\X,T)&= \overline{\theta}^\ast_0(\X) & \quad \text{ in } \Omega. 
    \end{array} 
    \right. 
\end{equation} 
Those adjoint problems have the nature of a terminal-value problem and are solved backwards in time.  Hence, the occuring time derivatives at the points $t_i$ in the weak formulation of the adjoint problems are discretised using a 2-point forward Euler formula with stepsize $\tau>0$, i.e. 
\begin{align*} 
\partial_t w(\X,t_i)& \approx \delta_A w_i = \frac{w_{i+1}- w_i}{\tau}, \\ \partial_{tt}w(\X,t_i)
&\approx \delta_A^2 w_i = \frac{-1}{\tau} \left(\frac{w_{i+1}-w_i}{\tau} - \delta_A w_{i+1}\right). 
\end{align*} 
The solutions $(\bm{u}_i^\ast, \theta^\ast_i)$ to the discretised weak formulation  satisfy the following system of equations for $i=n_t-1,\dots 1,0$:
\begin{multline*}
\rho \scal{\bm{u}^\ast_i}{\bm{\varphi}} + \tau^2  \scal{ \sigma(\bm{u}^\ast_i)}{\varepsilon(\bm{\varphi})} - \tau \gamma T_0 \scal{\nabla \theta^\ast_i}{\bm{\varphi}} \\= \tau^2\scal{\bm{p}^\ast}{\bm{\varphi}}+\rho \scal{\bm{u}^\ast_{i+1}}{\bm{\varphi}} - \rho \tau \scal{\delta_A\bm{u}^\ast_{i+1}}{\bm{\varphi}} - \tau \gamma T_0 \scal{\nabla\theta^\ast_{i+1}}{\bm{\varphi}} 
\end{multline*}
and
\begin{multline*}
\rho C_s \scal{\theta^\ast_i}{\psi} + \tau \kappa \scal{\nabla \theta^\ast_i}{\nabla \psi}+\tau \scal{(k\circledast \theta^\ast)_i}{\nabla\psi} - \tau \gamma \scal{\nabla \cdot \bm{u}^\ast_i}{\psi} \\
= \tau \scal{h^\ast}{\psi} + \rho C_s \scal{\theta^\ast_{i+1}}{\psi}
\end{multline*}
for all $\bm{\varphi}\in\Hko{1}$ and $\psi \in \hko{1},$ where we set $\bm{u}_{n_t}^\ast = \overline{\bm{u}}^\ast_0, \delta_A\bm{u}^\ast_{n_t} = \overline{\bm{u}}^\ast_1$ and $\theta_{n_t}^\ast = \overline{\theta}^\ast_0.$ 

\subsection{Finite differences}
The problems \eqref{eq:gradientellitpicbvpJT}-\eqref{eq:gradientellitpicbvpIT}-\eqref{eq:gradientellitpicbvpGT} for obtaining the Sobolev gradients can be solved by using e.g.\ the finite difference method. In the experiments below, we will focus on the one-dimensional problem, i.e. $\Omega=(0,1)\subset\mathbb{R}$. These problems at hand reduce to the following form
\begin{equation}
    \label{eq:ellipticbvp1D}
    \left\{ 
    \begin{array}{rll}
    -(r_1(x) j^\prime(x))^\prime + r_0(x) j(x) &= J(x) & \quad \text{ in } \Omega \\
    j^\prime(0) = 0, \quad j^\prime(1)&=0 & \quad \text{ on } \Gamma,
    \end{array} 
    \right. 
\end{equation}
for the unknown function $j\colon\Omega \to \mathbb{R}$ and a given $J.$  The spatial discretisation of $\Omega=(0,1)$ is done in $n=n_x$ equidistant subintervals of size $h=1/n_x$ with gridpoints $0=x_0 < x_1 <\dots <x_{n-1}<x_n=1.$ The approximation of $j$ at the node $x_i$ is denoted as $j_i.$ The boundary conditions are discretised using ghost points $x_{-1}=-h$ and $x_{n+1}=1+h$ and the second order formulas
$$
\frac{j_1 - j_{-1}}{2h} = 0 \quad \text{ and }  \quad \frac{j_{n+1} - j_{n-1}}{2h}=0.
$$
That is, we set $j_{-1} = j_1$ and $j_{n+1} = j_{n-1}.$ Furthermore, let $r_\ell(x_i):=r_\ell^i$  for $\ell=0,1$ and consider the half-grid points $x_{i\pm \frac{1}{2}} = x_i \pm \frac{h}{2}.$ The differential equation for $j(x)$ is  now  discretised using the second-order central difference scheme.  For $i=1,2,\dots,n-1,$ we have 
\begin{equation}
    \label{eq:discji}
    \frac{1}{h^2} \left(j_{i-1}\left[-r_1^{i+1/2}\right]+j_i\left[r_1^{i+1/2} + r_1^{i-1/2} + h^2r_0^i\right] + j_{i+1}\left[-r_1^{i-1/2} \right] \right) = J(x_i).
\end{equation}
Rewriting the equations for $i=0$ and $i=n$ yields 
\begin{equation}
    \label{eq:discj0}
    \frac{1}{h^2} \left(j_0 \left[r_1^{1/2} + r_1^{-1/2} + h^2r_0^0\right] + j_1\left[-r_1^{1/2}-r_1^{-1/2} \right] \right) = J(x_0)
\end{equation}
  and 
\begin{equation}
    \label{eq:discjn}
    \frac{1}{h^2} \left(j_{n-1}\left[-r_1^{n+1/2} - r_1^{n-1/2}\right] + j_n\left[r_1^{n+1/2} + r_1^{n-1/2} + h^2r_0^n \right] \right) = J(x_n). 
\end{equation}
Combining the equations \eqref{eq:discji}-\eqref{eq:discj0}-\eqref{eq:discjn} produces the matrix system
\begin{equation}
\label{eq:AjJmatrix}
A \bm{j} = \bm{J}, \quad A \in \mathbb{R}^{(n+1)\times(n+1)}, \quad \bm{j},\bm{J} \in\mathbb{R}^{n+1},
\end{equation} 
where $\bm{j} = (j_0,j_1,\dots,j_{n-1},j_n)^\top \in\mathbb{R}^{n+1},$ $$\bm{J}=(J(x_0),J(x_1),\dots,J(x_{n-1}),J(x_n))^\top \in \mathbb{R}^{n+1}$$ and the matrix  $A$ is given by
$$
 A  = \frac{1}{h^2}
% {\tiny{\begin{pmatrix}
%     r_1^{1/2} + r_1^{-1/2} +h^2r_0^0 & -r_1^{1/2} -r_1^{-1/2} &  & &  \\
%     -r_1^{3/2} & r_1^{3/2} + r_1^{1/2}+ h^2r_0^1 & -r_1^{1/2} &  &  \\    
%     &  \ddots & \ddots&\ddots & \\
%     &  & -r_1^{n-1/2} & r_1^{n-1/2} + r_1^{n-3/2}+ h^2 r_0^{n-1} & -r_1^{n-3/2} \\
%      & & & -r_1^{n+1/2} - r_1^{n-1/2} & r_1^{n+1/2} + r_1^{n-1/2} + h^2 r_0^n 
% \end{pmatrix}}},
{\tiny{\begin{pmatrix}
    r_1^{1/2} + r_1^{-1/2} +h^2r_0^0 & -r_1^{1/2} -r_1^{-1/2} &   &  \\
    -r_1^{3/2} & r_1^{3/2} + r_1^{1/2}+ h^2r_0^1 & -r_1^{1/2} &    \\    
    &  \ddots & \ddots&  \\
    &  -r_1^{n-1/2} & r_1^{n-1/2} + r_1^{n-3/2}+ h^2 r_0^{n-1} & -r_1^{n-3/2} \\
     &  & -r_1^{n+1/2} - r_1^{n-1/2} & r_1^{n+1/2} + r_1^{n-1/2} + h^2 r_0^n 
\end{pmatrix}}},
$$
i.e.\  the elements of $A = (a_{ij})_{(n+1)\times(n+1)}$ are given by
\begin{align*}
&a_{0,0}= r_1^{1/2} + r_1^{-1/2} +h^2r_0^0, \quad  a_{0,1} = -r_1^{1/2} - r_1^{-1/2}, \\
&a_{i,i-1}=-r_1^{i+1/2},\quad  a_{i,i}= r_1^{i+1/2} + r_1^{i-1/2} + h^2r_0^i,\quad     a_{i,i+1}= -r_1^{i-1/2},\\ & \qquad  \quad i =1,\dots,n-1, \\
&a_{n,n-1} = -r_1^{n+1/2} - r_1^{n-1/2},\quad  a_{nn} =r_1^{n+1/2}+r_1^{n-1/2}+h^2r_0^n,\\
&a_{ij} =0, \quad \text{ if } |i-j|>1, \quad i,j = 0,1,\dots,n.
\end{align*}
Applying the Gershgorin circle theorem, recalling \eqref{eq:r0r1weights}, the strictly diagonally dominant matrix $A$ has strictly positive eigenvalues (as the Gershgorin discs with center $h^2r_0^{i} + r_1^{i+1/2} + r_1^{i-1/2}$ and radii $r_1^{i+1/2}+r_1^{i-1/2}$  lie at a distance $h^2r_0^i>0$ to the right of the origin)  and is invertible. Therefore, a unique solution to \eqref{eq:AjJmatrix} exists.

\subsection{Setup of the experiment}\label{sec:setup}
We consider the inverse problem \textbf{ISP1.2} as an illustrative example of the above methods. The results obtained for the other ISPs are found to be analogous. The domain $\Omega$ is taken to be $(0,1)\subset \mathbb{R}$ and the final time $T=1$ is fixed. The implementation of the finite element method for solving the direct and adjoint problems was carried out on the FEniCSx platform \cite{FEniCSx3,FEniCSx2,FEniCSx1} with the version \texttt{0.8.0} of the DOLFINx module, using P1-FEM Lagrange polynomials. An equidistant subdivision of $\Omega$ into $n_x=50$ subintervals (corresponding to the nodes $x_i=ih, i=0,1,\dots, n_x$ and the size $h=1/n_x$) has been considered. For the time discretisation of $[0,T],$ we have taken $n_t=50$ subintervals with stepsize $\tau = 1/n_t,$ and $t_j = j\tau, j =0,1,\dots,n_t$ as time points.  We have approximated the time integrals for the measurement maps and the gradients by means of the composite Simpson $1/3$ rule.

The 1D version of problem \eqref{eq:problem} reduces to the system of equations for the displacement and temperature $u,\theta\colon Q_T \to\mathbb{R}$ given by
\begin{equation}
    \label{eq:1Dproblemut}
    \left\{ 
    \begin{array}{rll}
    \rho u_{tt} - (\lambda + 2\mu) u_{xx} + \gamma \theta_x &=p(x,t) & \quad \text{ in } Q_T \\
    \rho C_s \theta_t - \kappa \theta_{xx} - k\ast \theta_{xx} + \gamma T_0 u_{tx} &= h(x,t) & \quad \text{ in } Q_T \\ 
    u(0,t)=u(1,t)=0, \, \theta(0,t)=\theta(1,t)&=0& \quad \text{ in } [0,T] \\
    u(x,0)=\overline{u}_0(x), \, u_t(x,0)=\overline{u}_1(x), \, \theta(x,0)&= \overline{\theta}_0(x) & \quad \text{ in } \Omega. 
    \end{array} 
    \right. 
\end{equation}
After nondimensionalisation of the equations, we recover the simplified system
\begin{equation}
    \label{eq:1Dproblemsimple}
    \left\{ 
    \begin{array}{rll}
    u_{tt} - u_{xx} + \theta_x &= \widetilde{p}(x,t) & \quad \text{ in } Q_T \\
    \theta_t - \theta_{xx} - (k\ast \theta_{xx}) + \epsilon u_{xt} &= \widetilde{h}(x,t) & \quad \text{ in } Q_T \\
    u(0,t)=u(1,t)=0, \, \theta(0,t)=\theta(1,t)&=0 &\quad \text{ in } [0,T] \\
    u(x,0)=\widetilde{u}_0(x), \, u_t(x,0)=\widetilde{u}_1(x), \, \theta(x,0)&= \widetilde{\theta}_0(x) & \quad \text{ in } \Omega,
    \end{array} 
    \right. 
\end{equation}
for a small positive parameter $\epsilon = (\gamma^2 T_0)/(\rho^2 C_s C_1^2)$ where $C_1 = \sqrt{(\lambda+2\mu)/\rho} $ which is set to $0.0189$ in the experiments.
This investigated setting corresponds to a copper-alloy material \cite{VanBockstal2017b} for which $G = 4.8\times 10^{10} \text{ N/m$^2$}, \nu = 0.34, \alpha_T = 16.5 \times 10^{-6} \text{ 1/K}, \kappa = 401 \text{ W/mK}, \rho = 8960 \text{ kg/m$^3$}, C_s = 385 \text{ J/kgK}$ and $T_0=293 \text{ K}, $ which gives that $\mu=G, \lambda  =2\nu G/(1-2\nu), \gamma =2G\alpha_T(1+\nu)/(1-2\nu)= 6633\times10^3$ and $\epsilon = 0.0189.$
Notice that \eqref{eq:1Dproblemsimple} corresponds to \eqref{eq:1Dproblemut} with the choices
$$
\rho =1,\quad \mu = 0,\quad \lambda= 1, \quad \gamma =1,\quad C_s = 1,\quad \kappa = 1, \quad T_0 = \epsilon. 
$$
The method of manufactured solutions is applied to compare the quality of the reconstruction with the exact source. To this end, we use the synthetic data prescribed by
\begin{equation}
    \label{eq:exactutheta}
    u(x,t) = \frac{1}{10}\left(t^3+t+1\right)\left(1-\cos(2\pi x)\right) \quad \text{ and } \quad \theta(x,t) = 2 (t^2+1)x(1-x)^2.
\end{equation}
This set of functions satisfies the Dirichlet conditions in \eqref{eq:1Dproblemut} and have initial conditions
$$
\overline{u}_0(x) = \frac{1}{10}(1-\cos(2\pi x)= \overline{u}_1(x), \quad \text{ and } \quad \overline{\theta}_0(x) = 2x(1-x)^2. 
$$
Furthermore, by substitution of \eqref{eq:exactutheta} in \eqref{eq:1Dproblemut} the exact sources $p$ and $h$ can be recovered. For  the type-{III} system of thermoelasticity, with $k=\frac{1}{100}\exp(-2t),$ they are given by
\begin{align*}
    p(x,t) &= \frac{3t}{5}\left(1-\cos(2\pi x)\right)  - \frac{2\pi^2}{5}\left(t^3+t+1\right)\cos(2\pi x) + 4x(t^2+1)(x-1) \\ & \qquad + 2(t^2+1)(x-1)^2\\
    h(x,t) &= - 12.06 t^{2} x + 0.01134 \pi t^{2} \sin{\left(2 \pi x \right)} + 8.04 t^{2} + 4 t x \left(x - 1\right)^{2} \\ & \qquad + 0.06 t x - 0.04 t - 12.09 x + 0.09 x \exp({- 2 t}) + 0.00378 \pi \sin{\left(2 \pi x \right)} \\& \qquad + 8.06 - 0.06 \exp({- 2 t})
\end{align*}
The decomposition of $p(x,t)=g(t)f(x) + r(x,t)$ consists of 
\begin{equation*}
g(t) = \frac{-2\pi^2}{5}(t^2+t+1) \quad \text{ and } \quad r(x,t) =p(x,t) -g(t)f(x),
\end{equation*} 
where we will consider two different choices of the exact function $f$ we want to recover, i.e.
\begin{equation}
    \label{eq:fchoices}
    f^0(x) = x\sin(2\pi x) \quad \text{ and } \quad f^1(x) = x\sin(2\pi x) + 0.2. 
\end{equation}
The $\lp{2}$-norms of $f^0$ and $f^1$ are
\begin{equation} 
\label{eq:fnormexperimentisp12}
%\nrm{f} = \left(\int_0^1 x^2\sin(2\pi x)^2 \dx \right)^{1/2} = \sqrt{\frac{8\pi^2 - 3}{48\pi^2}} \approx 0.40041739822.
\nrm{f^0} %= \sqrt{\frac{8\pi^2-3}{48\pi^2}}
\approx 0.40041739822 \quad \text{ and } \quad \nrm{f^1} %= \frac{\sqrt{3}}{60\pi}\sqrt{248\pi^2 - 240\pi -75}
\approx 0.3696919197.
\end{equation} 
Note that both functions in \eqref{eq:fchoices} are asymmetric as compared to the ones considered in \cite{VanBockstal2014, VanBockstal2017b}.

In the experiments, we will check if the algorithms produce an approximation with $\Leb^2$-norm close to $0.400$ for $f^0$ and close to $0.370$ for $f^1.$ The exact measurement $\chi_T(x)$ is readily computed as
\begin{equation}
    \label{eq:exactchiT}
    \chi_T(x) = \int_0^T u(x,t)\dt = \frac{ 7}{40}(1-\cos(2\pi x)).
\end{equation}

Remark that by this construction,  the function $g \in \mathcal{C}_{1} \subset \mathcal{C}_0,$ the conditions of \Cref{lem:forward_problem} are met, and that \Cref{thm:isp12} guarantees the uniqueness of a solution to the inverse problem. For an appropriate choice of $\alpha_1$, the Landweber scheme is convergent by \Cref{thm:convergenceLandweberISP12}. Moreover, \Cref{thm:ITE12uniquemins,thm:gateauxIT11} yield information of the corresponding minimisation problems associated with the cost functional $\mathcal{I}_\beta.$ At the start of the experiment, only the initial and boundary data, the measurement $\chi_T$ and the functions $g(t), r(x,t)$ and $h(x,t)$ are considered as available data. 

In case of  a noisy measurement, with noise level $\widetilde{e} \in \{1\%,3\%, 5\%\}$, the measurement $\chi_T$ is perturbed to $\chi_T^e$ using additive white Gaussian noise, i.e.\ 
$$
\chi_T^e = \chi_T + \chi_{\text{noise},e}, \quad \chi_{\text{noise},e} \sim \mathcal{N}\left(0, \sigma_e^2 \right), 
$$
where $\chi_{\text{noise},e}$ is noise generated from a normal distribution with zero mean and standard deviation $\sigma_e = \widetilde{e}\max_{x \in \Omega}\abs{\chi_T(x)}.$ In order to avoid inverse crimes, the noise is added to the exact measurement which is at first considered on a finer grid $(n_x=1000)$ and afterwards projected back to the working grid $(n_x=50).$ The noisy measurements for the different noise levels $\widetilde{e}$ are shown in \Cref{fig:noisymeasurements},  with corresponding numerical values
$$
\nrm{\chi_T - \chi_T^e} = e(\widetilde{e}) \approx \begin{cases}
    0.00254  & \text{ if }  \widetilde{e} = 1\%, \\ %0.002543852424477432
    0.00763  & \text{ if } \widetilde{e} = 3\%,\\%0.007631557273432302
    0.01272  & \text{ if } \widetilde{e} = 5\%. %0.012719262122387174
\end{cases}
$$
\begin{figure}[t]
    \centering
    \includegraphics[width=0.9\linewidth]{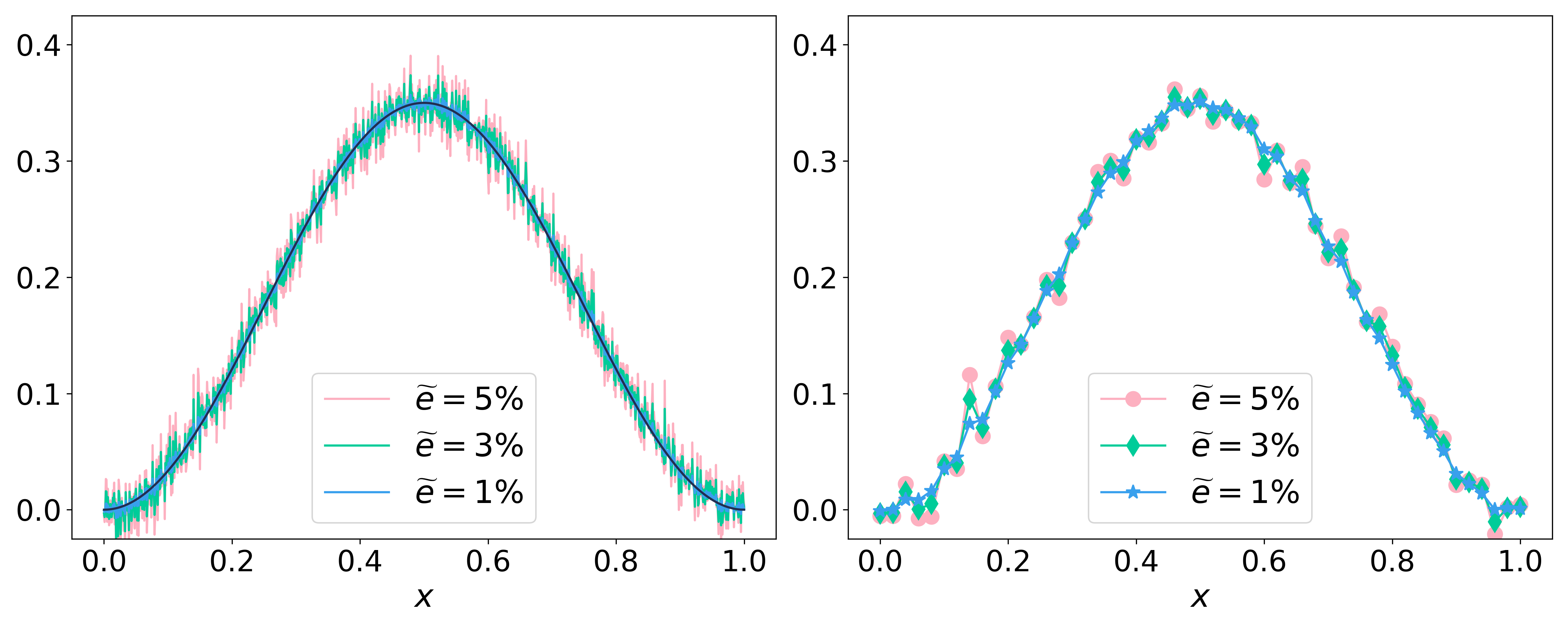}
    \caption{Exact $\chi_T(x)$ and noisy measurements $\chi_T^e(x)$ on the finer grid (left) and their projection on the working grid (right) for noise levels in $\{1\%,3\%,5\%\}.$}
    \label{fig:noisymeasurements}
\end{figure}

The scalar $r$ in the stopping criterion \eqref{ip1.2:stopping} is taken to be $1.001.$
The default regularisation parameters used in the determination of the Sobolev gradient are set to $r_0=1$ and $r_1=1/100$ by trial and error.  A more procedural approach for setting these values should be investigated in the future.

%\subsection{Numerical results} %\label{subsec:numericalresults}
The numerical results for the reconstruction problem addressed by the proposed methods for $\textbf{ISP1.2}$ are discussed in the following sections. The initial guess is taken to be zero for all methods unless otherwise stated.  The quality of the reconstructed $f_K$ will be assessed by means of the relative error $e_r = \nrm{f-f_K}/\nrm{f},$ the data fidelity (DF) $\nrm{N_T(f_K) - X_T^e},$ the norm/penalty (P) $\nrm{f_K}$ and the value of the cost functional $\mathcal{J}_\beta[f].$ The obtained values for the relative errors for the different methods are compared in \Cref{tab:summary}.

%Landweber
\subsection{Landweber method} %landweber

The results for the reconstructed sources $f_K$ obtained by applying the Landweber procedure to the test case described in \Cref{sec:setup} are collected in \Cref{fig:L-f0-relerror} and \Cref{fig:L-f0-K} for the  source $f^0$ and in \Cref{fig:L-f1-relerror} and \Cref{fig:L-f1-K} for $f^1.$ The relation between the choice of parameter value $0 < \alpha:=\alpha_1 < \nrm{N_T}^{-2}_{\mathcal{L}\left(\lp{2}, \lp{2}\right)}$ and the relative error $e_r$ is depicted in the right panels of \Cref{fig:L-f0-relerror} and \Cref{fig:L-f1-relerror}. The relative error decreases when increasing the relaxation parameter, until a critical value %$\nrm{N_T}^{-2}_{\mathcal{L}\left(\lp{2},\lp{2}\right)}$
around $\alpha\approx6.1.$ Indeed, according to \Cref{thm:convergenceLandweberISP12}, the parameter $\alpha$ can not be chosen arbitrarily large. The different curves in the left panels correspond to different values of the noise. In the noise free case, the iterations are halted at the maximum number of $200$ iterations, whilst in the presence of noise the Mozorov discrepancy rule \eqref{eq:stoppingisp12} is used. Increasing the value of $\alpha$ in the range of convergence helps to reduce the relative error, yielding better approximations. However, because of the near horizontal graph, only a small improvement is visible. A drop in relative error is observed just before the critical point. Taking this value for $\alpha = \argmin\{e_r(a) \mid a \in (0,\nrm{N_T}^{-2}_{\mathcal{L}(\lp{2},\lp{2})})\}$ yields reconstructions with smallest relative error, which are plotted in the left panels of \Cref{fig:L-f0-relerror} and \Cref{fig:L-f1-relerror}.

\begin{figure}[t]
    \centering 
    \includegraphics[width=\linewidth]{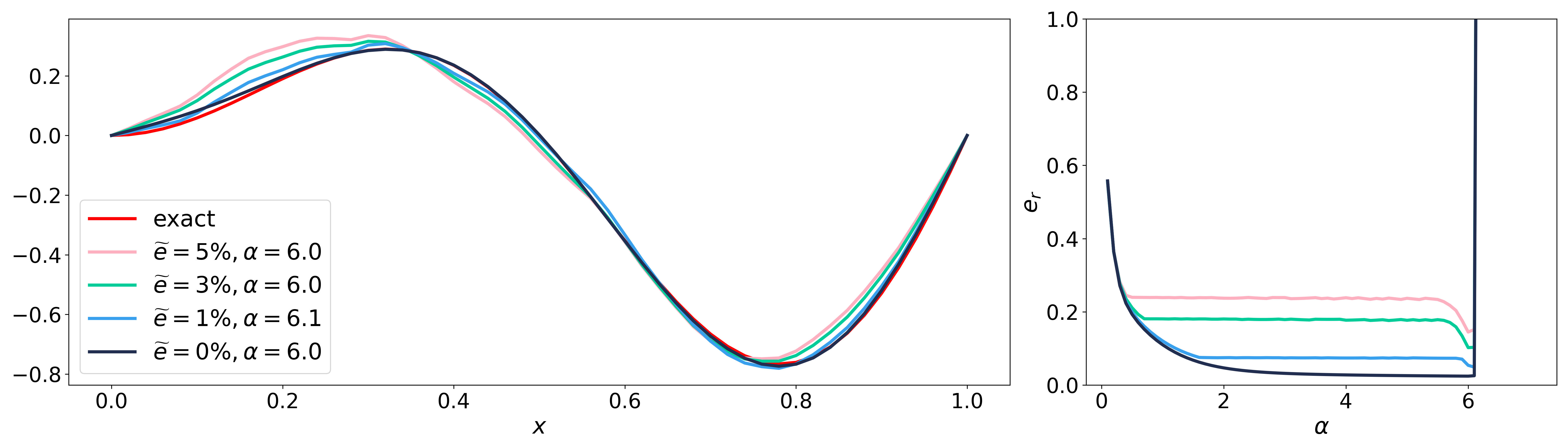}
    \caption{Reconstruction using the Landweber scheme for $f^0$ (left) with corresponding relative errors $e_r$ in function of the relaxation parameter $\alpha$ (right) for \textbf{ISP1.2}.}
    \label{fig:L-f0-relerror}
\end{figure} 
\begin{figure}[t]
    \centering 
    \includegraphics[width=\linewidth]{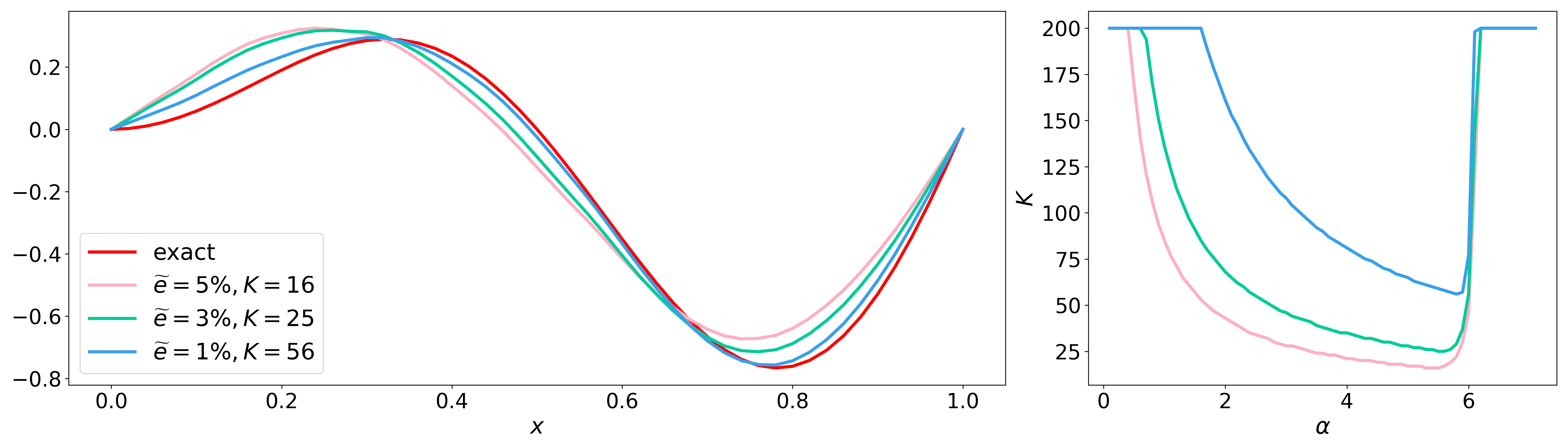}
    \caption{Reconstruction using the Landweber scheme for $f^0$ (left) with corresponding iterations $K$ in function of the relaxation parameter $\alpha$ (right) for \textbf{ISP1.2}.}
    \label{fig:L-f0-K}
\end{figure}
\begin{figure}[t]
    \centering 
    \includegraphics[width=\linewidth]{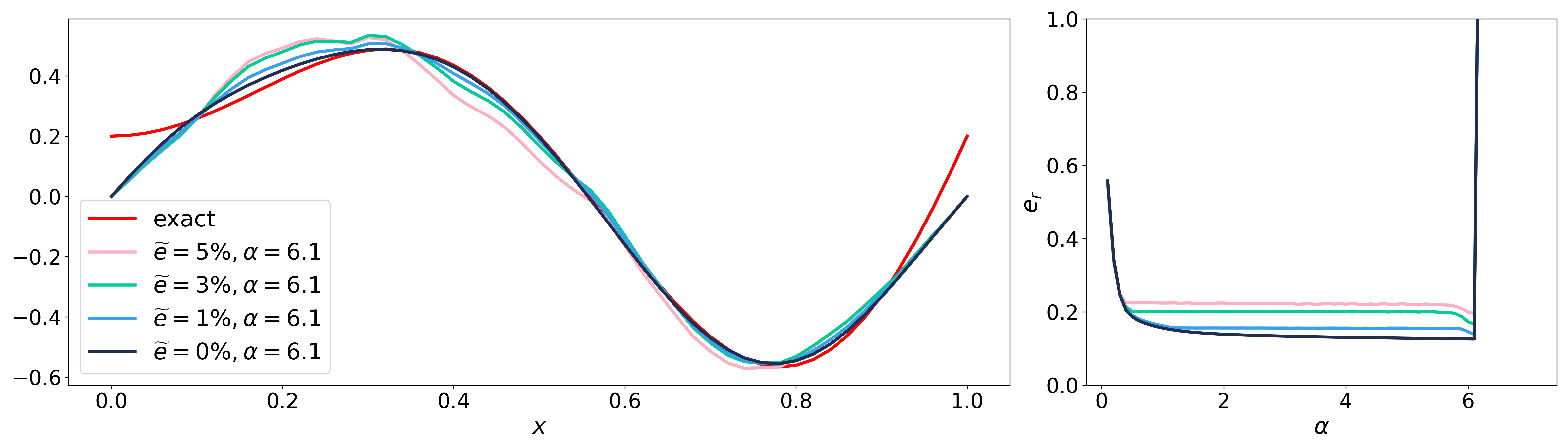}
    \caption{Reconstruction using the Landweber scheme for $f^1$ (left) with corresponding relative errors $e_r$ in function of the relaxation parameter $\alpha$ (right) for \textbf{ISP1.2}.}
    \label{fig:L-f1-relerror}
\end{figure} 
\begin{figure}[t]
    \centering 
    \includegraphics[width=\linewidth]{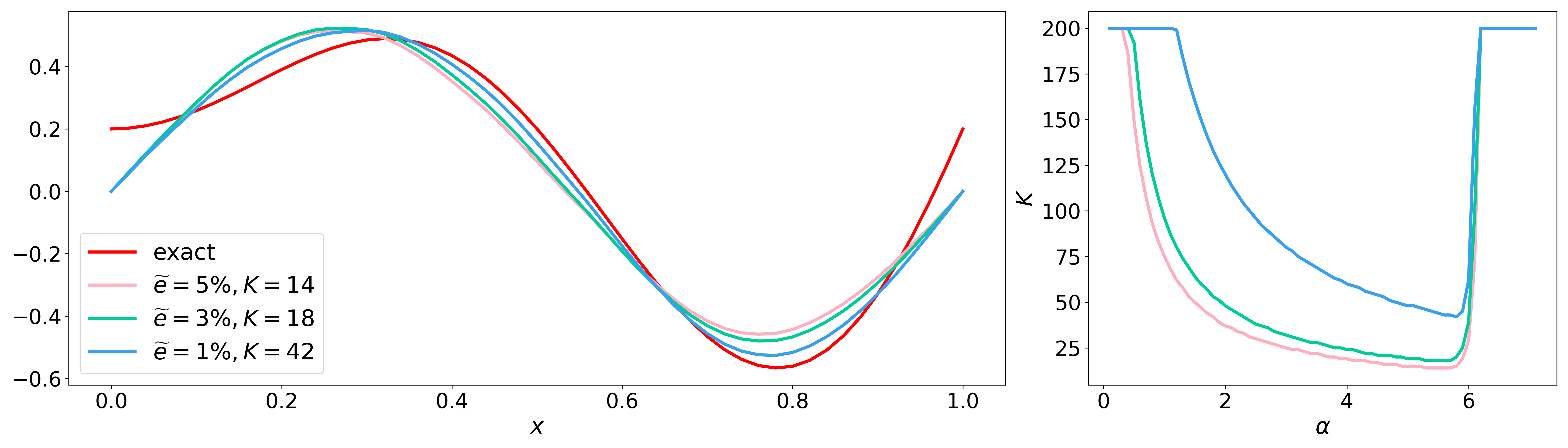}
    \caption{Reconstruction using the Landweber scheme for $f^1$ (left) with corresponding iterations $K$ in function of the relaxation parameter $\alpha$ (right) for \textbf{ISP1.2}.}
    \label{fig:L-f1-K}
\end{figure}

\begin{table}[t]
    \centering
    \caption{Result for the Landweber iterations for $f^0$ and $f^1$ for \textbf{ISP1.2}.}
    {\footnotesize{ 
    \begin{tabular*}{\linewidth}{@{\extracolsep{\fill}}  c|c c c  c|c c c c }\hline 
    &$\alpha$ & $e_r$ & $K$ & $DF$ & $\alpha$& $e_r$ & $K$ & $DF$ \\ \hline 
    $\widetilde{e}$& \multicolumn{4}{c|}{$0\%$} & \multicolumn{4}{c}{$1\%$}  \\
    \multirow{2}{*}{ $f^0$} & \multirow{2}{*}{6.0} & \multirow{2}{*}{0.0241} & \multirow{2}{*}{200} & \multirow{2}{*}{1.1740e-04} & 6.1& 0.0487 & 198 & 2.6909e-03\\
                             & & & &  & 5.8& 0.0730 & 56 & 2.6761e-03 \\
    \multirow{2}{*}{$f^1$} & \multirow{2}{*}{6.1}& \multirow{2}{*}{0.1255} & \multirow{2}{*}{200} & \multirow{2}{*}{7.1335e-04} &6.1&0.1381 & 155 & 2.6992e-03 \\ 
                           & & &  &&5.8&0.1551 & 42 & 2.6866e-03\\\hline
    $\widetilde{e}$& \multicolumn{4}{c|}{$3\%$} & \multicolumn{4}{c}{$5\%$}  \\
    \multirow{2}{*}{ $f^0$}& 6.0& 0.1020 & 56 & 7.9720e-03 & 6.0&0.1446 & 46 & 1.3361e-02 \\
                           &5.5  & 0.1784 & 25 & 8.0583e-03 &5.3& 0.2369 & 16 & 1.3229e-02 \\
    \multirow{2}{*}{$f^1$}&6.1 & 0.1661 & 99 & 8.0705e-03  &6.1&0.1956 & 72 & 1.3493e-02\\ 
                          &5.3&0.2007 & 18 & 7.8825e-03 &5.3& 0.2213 & 14 & 1.3045e-02\\\hline
    \end{tabular*}
    }}
    \label{tab:L}
\end{table}

Based on the relative errors, it seems beneficial to pick the value of $\alpha$ as large as possible, keeping it below the critical value. However, taking into account the computational time, as expressed by the number $2K$ of calls to the direct problem, reveals that picking $\alpha$ slightly smaller entails a substantial reduction in computational cost as fewer iterations are needed, albeit a moderate increase in relative error. This property is observed for the different noisy experiments. The reconstruction based on the fewest iterations and the relation between the values of $K$ and $\alpha$ are presented in \Cref{fig:L-f0-K} and \Cref{fig:L-f1-K}. The corresponding values of the stopping index $K$, the relative error $e_r$ and the values of $\alpha$ are tabulated in \Cref{tab:L}. We conclude that the reconstruction is adequately achieved (for both cases), yielding better results as the noise fades away, and if the boundary values are known a priori. The obtained reconstructions provide a good fitting of the measurement data. The behaviour at the boundary is enforced by the method as the values of the initial guess remain unchanged during the iterations.

\begin{remark}\label{rem:isp11nonsymetric}
%reconstruction isp11 with non-symmetric source
%figure for landweber
\Cref{fig:isp11landweberf0} shows the result for  the Landweber algorithm for \textbf{ISP1.1} with the same setup as described above for the reconstruction of the non-symmetric source $f^0$ and hence shows the improvement compared to limitation of the work done in \cite{VanBockstal2017b}. The shape and form of the curve of the relative errors versus the relaxation parameter is similar to those considered for \textbf{ISP1.2}. 
The application of the other reconstruction methods to \textbf{ISP1.1} yielded analogous conclusions as those for \textbf{ISP1.2} as well.
\begin{figure}[t]
\centering 
 \includegraphics[width=\linewidth]{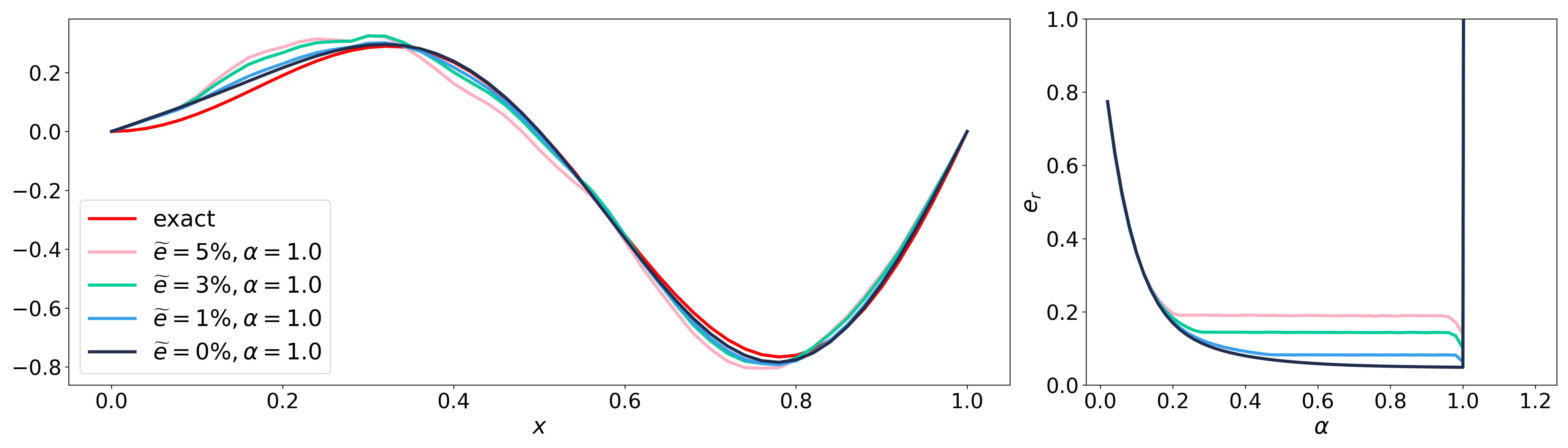}
\caption{Reconstruction for \textbf{ISP1.1} using the Landweber scheme for $f^0$ (left) with corresponding relative erros $e_r$ in function of the relaxation parameter (right).}
\label{fig:isp11landweberf0}
\end{figure} 
\end{remark}

\subsection{Steepest descent gradient method} %GManalysis
The results for the standard steepest descent gradient algorithm developed in \Cref{sec:gradientmethods} are shown in \Cref{fig:L2GM-f0} and \Cref{fig:L2GM-f1} for the $\Leb^2$-gradient $\nabla \mathcal{I}_\beta$ for the exact source $f^0$ (which is zero at the boundary) and $f^1$ (which is nonzero at the boundary), for various levels of noise. The middle panels show the curve of the datafit versus the norm of $f_K,$ parameterised by the value of $\beta \in[0,0.1],$ which is crossed from the top left corner to the bottom right corner. For the same range of $\beta$, the right panels indicate the relative error of the reconstruction. From these figures, we observe that the best approximation in terms of datafit, norm and relative error are found when $\beta=0.$ The rationale being that the extra term $\beta\nrm{f}^2$ in the cost functional (if present) favours small sizes of $f,$ which impacts the fitting of the measurement.  The left panels then show the reconstructed source for the different levels of noise for this value of $\beta=0.$ The relative errors, fidelity and norm of the reconstructions are reported in \Cref{tab:L2GM}. When more noise is considered, a less accurate recovery is obtained. Notice that in both figures, the values of the reconstruction at the boundary match those of the initial guess. From \Cref{fig:L2GM-f1} one observes that the approximation near the boundary remains at the fixed value of zero, which is an artefact of the method. The iterations were halted when the cost functional increased. Increasing the value of $\beta$ leads to an increase in relative error, a larger deficit in the data fitting and a decrease in norm. This behaviour is observed for all the gradient methods, and is reasonable in our setting, as enforcing a penalty whenever $\nrm{f}$ is large in the cost functional tends to favour small, in $\Leb^2$-norm, approximations. More sophisticated penalty terms can be considered in different settings.

\begin{figure}[t]
    \centering 
    \includegraphics[width=\linewidth]{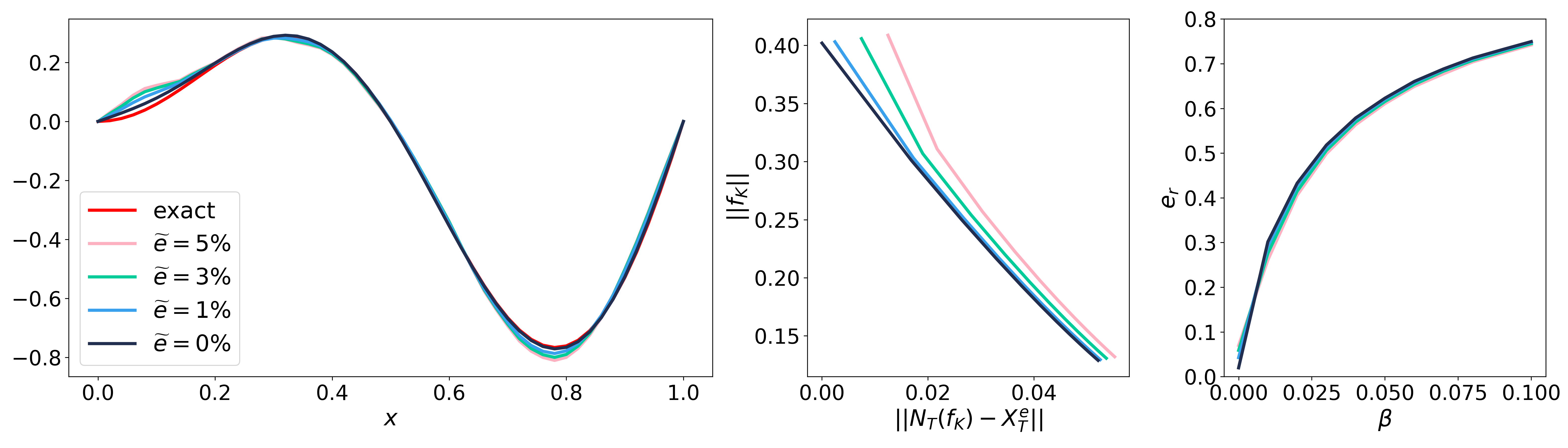}
    \caption{Reconstruction using the $\Leb^2$-gradient with the steepest descent algorithm for $f^0$ for $\beta=0$ (left), penalty versus fitting of the data (middle) and relative errors in function of $\beta$ (right) for \textbf{ISP1.2}.} 
    \label{fig:L2GM-f0}
\end{figure} 
\begin{figure}[t]
    \centering 
    \includegraphics[width=\linewidth]{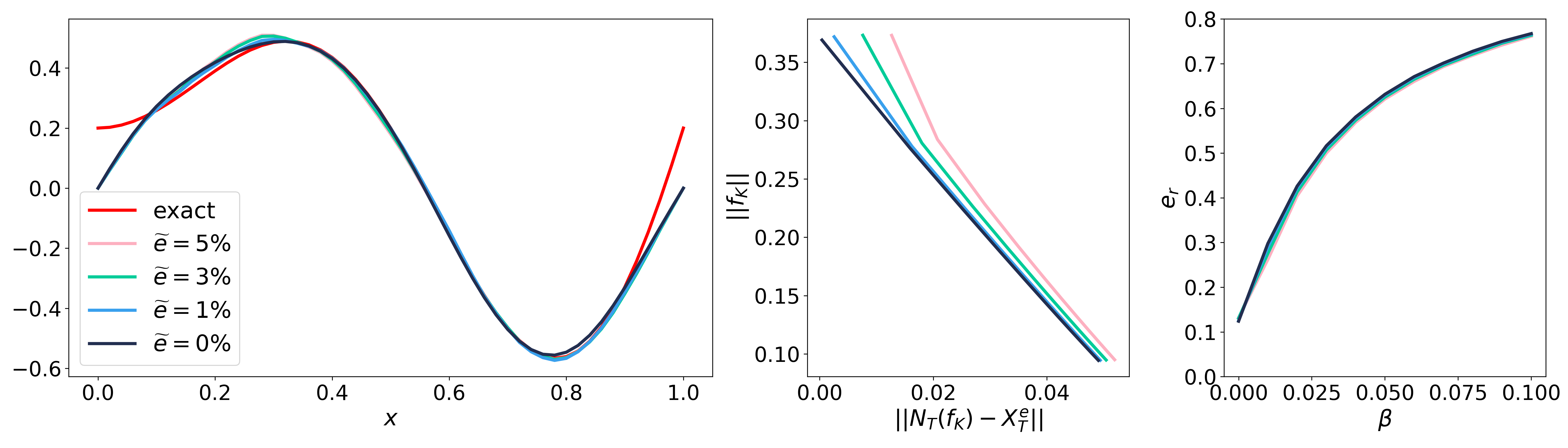}
    \caption{Reconstruction using the $\Leb^2$-gradient with the steepest descent algorithm for $f^1$ for $\beta=0$ (left), penalty versus fitting of the data (middle) and relative errors in function of $\beta$ (right) for \textbf{ISP1.2}.} 
    \label{fig:L2GM-f1}
\end{figure} 

\begin{table}[t]
    \centering 
    \caption{Results for minimisation of the functional \eqref{eq:IT12} using the $\Leb^2$-gradient \eqref{eq:gradientIL2} and the steepest descent algorithm    %Results $\Leb^2$-gradient using steepest descent 
    for $f^0$ and $f^1$ for \textbf{ISP1.2}, where $e_r$ is the relative error, $P=\nrm{f_K}$ and  $DF = \nrm{N_T(f_K)-X_T^e}$ where $f_K$ is the obtained approximation ($\beta=0.0$).}
    \footnotesize{ 
    \begin{tabular*}{\linewidth}{@{\extracolsep{\fill}}  c|c c c  c|c c c c }\hline 
    & $K$ & $e_r$ & $P$ & $DF$ &$K$ &$e_r$ & $P$ & $DF$ \\ \hline 
    $\widetilde{e}$& \multicolumn{4}{c|}{$0\%$} & \multicolumn{4}{c}{$1\%$}  \\
    $f^0$ &200&  0.0197 & 0.4019 & 3.3728e-05 &29&  0.0425 & 0.4031 & 2.4804e-03 \\
    $f^1$&200 & 0.1240 & 0.3691 & 3.9797e-04 &84&  0.1253 & 0.3719 & 2.5426e-03 \\ \hline
    $\widetilde{e}$&\multicolumn{4}{c|}{$1\%$} & \multicolumn{4}{c}{$5\%$} \\ 
    $f^0$&19 &0.0587 & 0.4058 & 7.4750e-03 &15& 0.0698 & 0.4087 & 1.2470e-02\\
    $f^1$&33 &  0.1308 & 0.3732 & 7.5871e-03 &23& 0.1320 & 0.3731 & 1.2667e-02 \\
    \hline 
    \end{tabular*}
   }     
   
    \label{tab:L2GM}
\end{table} 
The results for the steepest descent algorithm using the Sobolev gradient $\nabla_S \mathcal{I}_\beta$ are presented in \Cref{fig:SGM-f0} and \Cref{fig:SGM-f1} for the reconstruction of $f^0$ and $f^1,$ respectively. The best compromise between the datafit and the norm of the reconstruction was found for $\beta=0$ in both cases. The numerical values are reported in \Cref{tab:SGM}.  Compared to the results of the $\Leb^{2}$-gradient, the boundary values are free to be updated using the Sobolev gradient and give a good approximation of the left boundary value, whilst a reasonable one at the right. Although the relative errors for the $\Leb^2$-gradient seem to be better, we remark that the exact source has a steep increase near the right boundary and the fixed value at the boundary is able to help in the iteration. This extra assumption is not needed for the Sobolev variant.

\begin{figure}[t]
    \centering 
    \includegraphics[width=\linewidth]{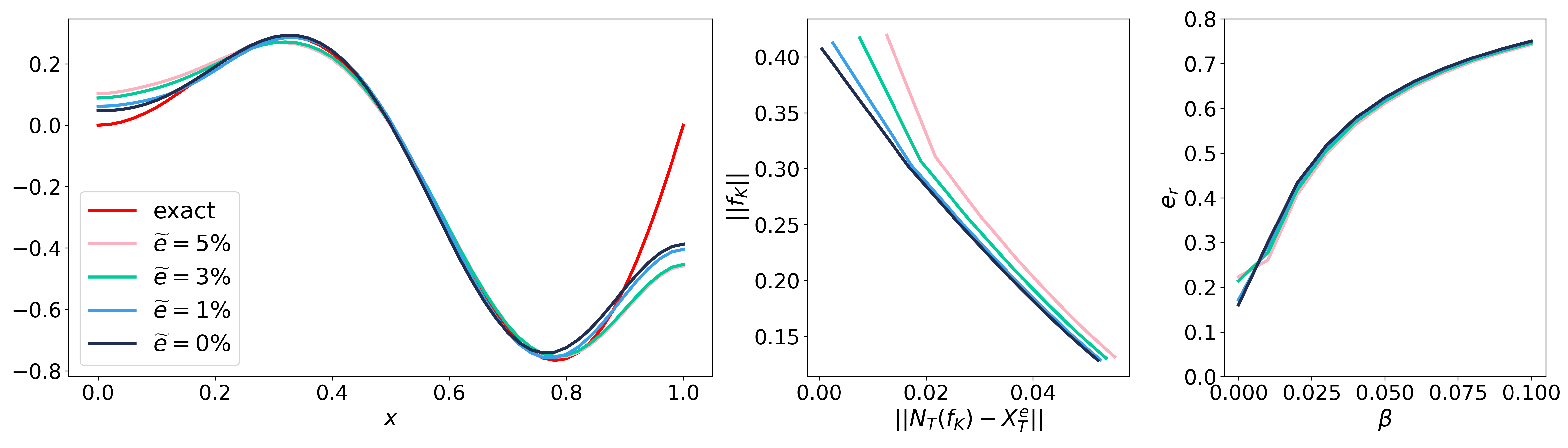}
    \caption{Reconstruction using the Sobolev gradient with the steepest descent algorithm for $f^0$ for $\beta=0$ (left), penalty versus data fitting (middle) and relative errors in function of $\beta$ (right) for \textbf{ISP1.2}.} 
    \label{fig:SGM-f0}
\end{figure} 
\begin{figure}[t]
    \centering 
    \includegraphics[width=\linewidth]{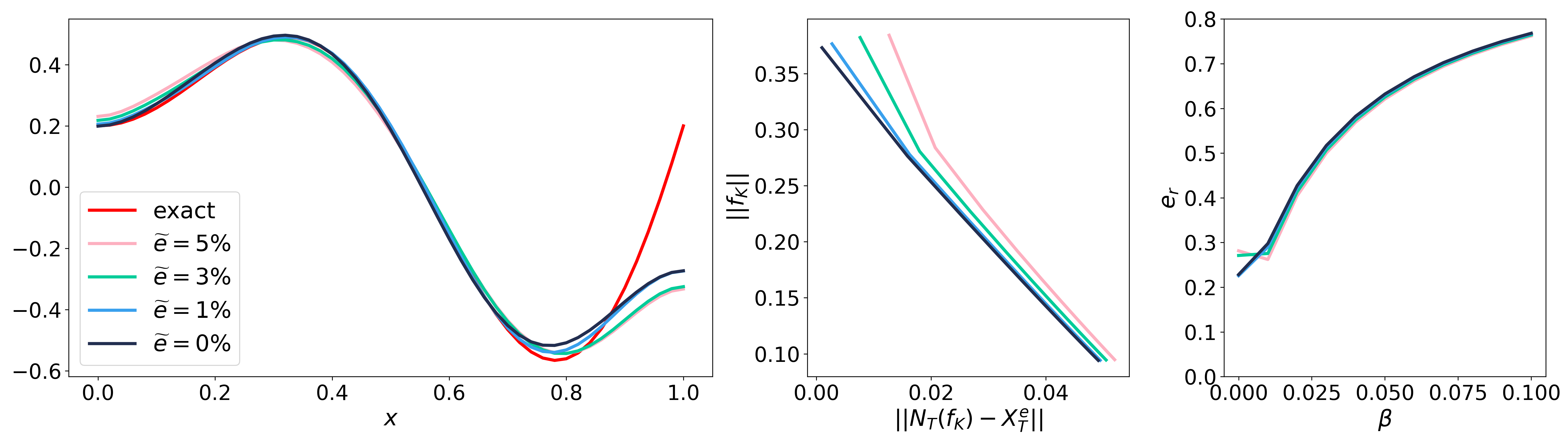}
    \caption{Reconstruction using the Sobolev gradient with the steepest descent algorithm for $f^1$ for $\beta=0$ (left), penalty versus data fitting (middle) and relative errors in function of $\beta$ (right) for \textbf{ISP1.2}.} 
    \label{fig:SGM-f1}
\end{figure} 
\begin{table}[t]
    \centering 
    \caption{Results for minimisation of the functional \eqref{eq:IT12} using the Sobolev gradient from \eqref{eq:gradientellitpicbvpIT}  and the steepest descent algorithm 
    for $f^0$ and $f^1$ for \textbf{ISP1.2}, where $e_r$ is the relative error, $P=\nrm{f_K}$ and  $DF = \nrm{N_T(f_K)-X_T^e}$ where $f_K$ is the obtained approximation ($\beta=0.0$).}
   \footnotesize{ 
    \begin{tabular*}{\linewidth}{@{\extracolsep{\fill}}  c|c c c  c|c c c c }\hline 
    & $K$ & $e_r$ & $P$ & $DF$ &$K$ &$e_r$ & $P$ & $DF$ \\ \hline 
    $\widetilde{e}$& \multicolumn{4}{c|}{$0\%$} & \multicolumn{4}{c}{$1\%$}  \\
    $f^0$ &200&0.1605 &0.4073 & 5.1246e-04 &113& 0.1718 & 0.4126 &2.5445e-03\\
    $f^1$&200 & 0.2283& 0.3733 &9.8183e-04 &200 & 0.2258 & 0.3767 & 2.7287e-03 \\ 
    \hline
    $\widetilde{e}$ &\multicolumn{4}{c|}{$3\%$} & \multicolumn{4}{c}{$5\%$} \\ 
    $f^0$ &28 & 0.2145 & 0.4174 & 7.5863e-03 &20&  0.2236 & 0.4195 & 1.2635e-02\\
    $f^1$ &62 &0.2710 & 0.3824 & 7.6283e-03 &38& 0.2815 & 0.3844 & 1.2652e-02 \\
    \hline 
    \end{tabular*}
 }     
    \label{tab:SGM}
\end{table} 

\subsection{Conjugate gradient method}
%CGMAnalysis
The results when applying the method described in \Cref{sec:gradientmethods} using the conjugate gradients, are shown in \Cref{fig:L2CGM-f0} and \Cref{fig:L2CGM-f1} for the $\Leb^2$-gradient approach. The numerical values are reported in \Cref{tab:L2CGM}. The quality of the reconstruction is similar to that of the steepest descent approach; however, we notice an improvement for higher noise levels. In this experiment also $\beta=0$ was the best value to consider, which is reasonable as no penalty term is inherent to the problem. The iterations stopped after $K=3$ steps, which is, compared to the steepest descent method, a remarkable improvement. This method also suffers from the fixation of the iterates at the boundary, as is visible from the left panel of \Cref{fig:L2CGM-f1}. 

\begin{figure}[t]
    \centering 
    \includegraphics[width=\linewidth]{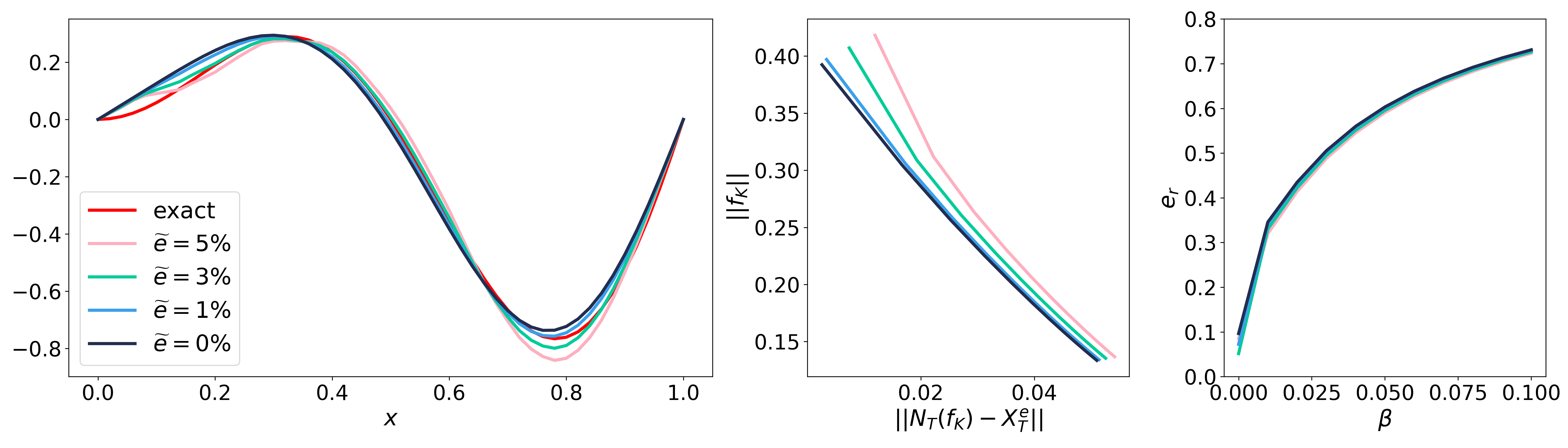}
    \caption{Reconstruction using the $\Leb^2$-gradient with the conjugate gradient algorithm for $f^0$ for $\beta=0$ (left), penalty versus data fitting (middle), relative errors in function of $\beta$ (right) for \textbf{ISP1.2}.} 
    \label{fig:L2CGM-f0}
\end{figure} 
\begin{figure}[t]
    \centering 
    \includegraphics[width=\linewidth]{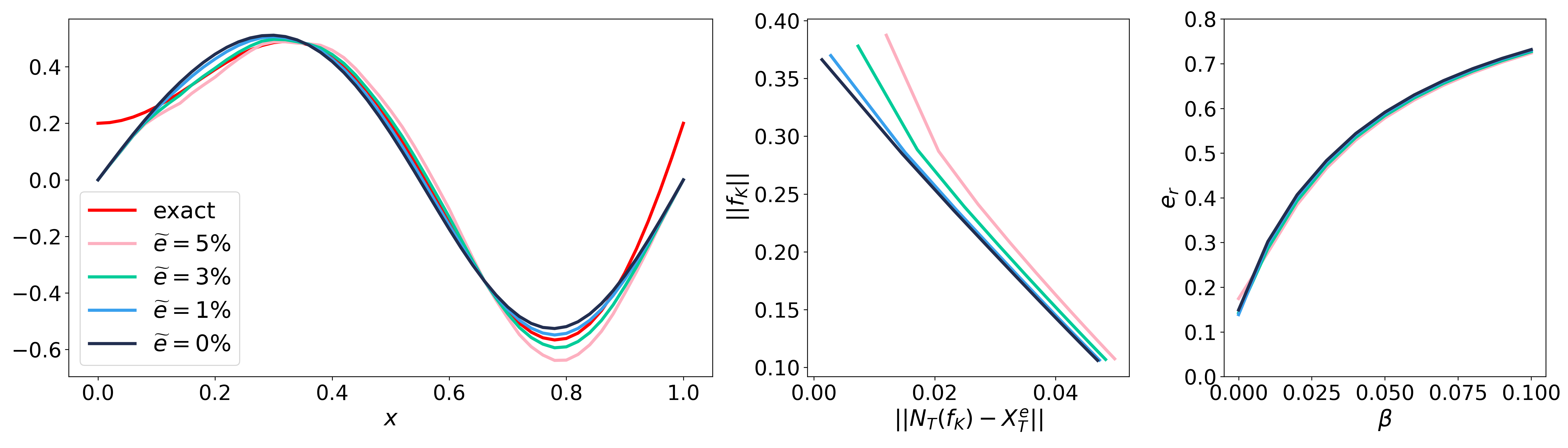}
    \caption{Reconstruction using the $\Leb^2$-gradient with the conjugate gradient algorithm for $f^1$ for $\beta=0$ (left), penalty versus data fitting (middle), relative errors in function of $\beta$ (right) for \textbf{ISP1.2}.} 
    \label{fig:L2CGM-f1}
\end{figure} 

\begin{table}[t]
    \centering 
    \caption{Results for minimisation of the functional \eqref{eq:IT12} using the $\Leb^2$-gradient \eqref{eq:gradientIL2}  and the conjugate gradient algorithm 
    for $f^0$ and $f^1$ for \textbf{ISP1.2}, where $e_r$ is the relative error, $P=\nrm{f_K}$ and  $DF = \nrm{N_T(f_K)-X_T^e}$ where $f_K$ is the obtained approximation $(K=3, \beta=0.0)$.}
    \label{tab:L2CGM}
    \footnotesize{ 
    \begin{tabular*}{\linewidth}{@{\extracolsep{\fill}}  c| c c  c| c c c }\hline 
    &$e_r$ & $P$ & $DF$ & $e_r$ & $P$ & $DF$ \\ \hline 
    $\widetilde{e}$& \multicolumn{3}{c|}{$0\%$} & \multicolumn{3}{c}{$1\%$}  \\ 
    $f^0$ & 0.0964 & 0.3924 & 2.5929e-03 & 0.0723 & 0.3971 & 3.3966e-03 \\
    $f^1$& 0.1492 & 0.3662 & 1.3347e-03 & 0.1388 & 0.3698 & 2.7948e-03 \\ \hline 
    $\widetilde{e}$& \multicolumn{3}{c|}{$3\%$} & \multicolumn{3}{c}{$5\%$} \\
    $f^0$ & 0.0515 & 0.4072 & 7.3974e-03 &0.0852 & 0.4183 & 1.1938e-02\\
    $f^1$  &0.1429 & 0.3779 & 7.3119e-03 &0.1747 & 0.3874 & 1.1986e-02 \\\hline
        \end{tabular*}
    }     
\end{table} 

Finally, the conjugate gradient method applied to the Sobolev version $\nabla_S\mathcal{I}_\beta$ is presented in \Cref{fig:SCGM-f0} and \Cref{fig:SCGM-f1}, and the numerical values are shown in \Cref{tab:SCGM}. The strength of this approach lies in the fact that only $K=2$ iterations are needed to produce an accurate reconstruction (as similar to the standard conjugate gradient approach), but without additional assumptions and under various noise levels. We observe that the relative errors are smaller for the experiment with $f^1.$ 

\begin{figure}[t]
    \centering 
    \includegraphics[width=\linewidth]{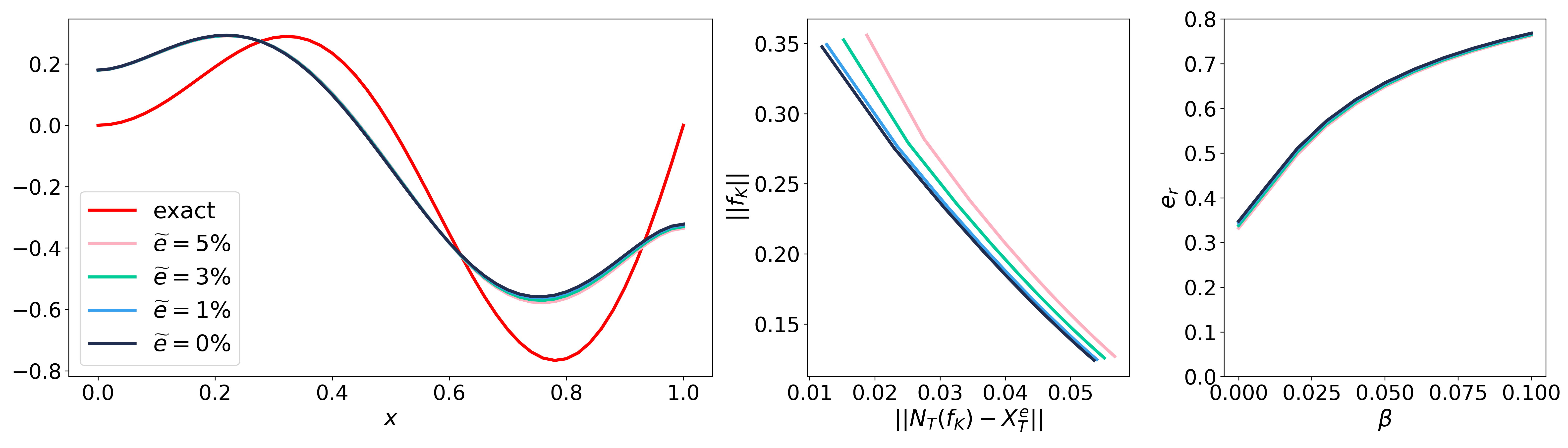}
    \caption{Reconstruction using the Sobolev gradient with the conjugate gradient algorithm for $f^0$ for $\beta=0$ (left), penalty versus data fitting (middle), relative errors in function of $\beta$ (right) for \textbf{ISP1.2}.} 
    \label{fig:SCGM-f0}
\end{figure} 
\begin{figure}[t]
    \centering 
    \includegraphics[width=\linewidth]{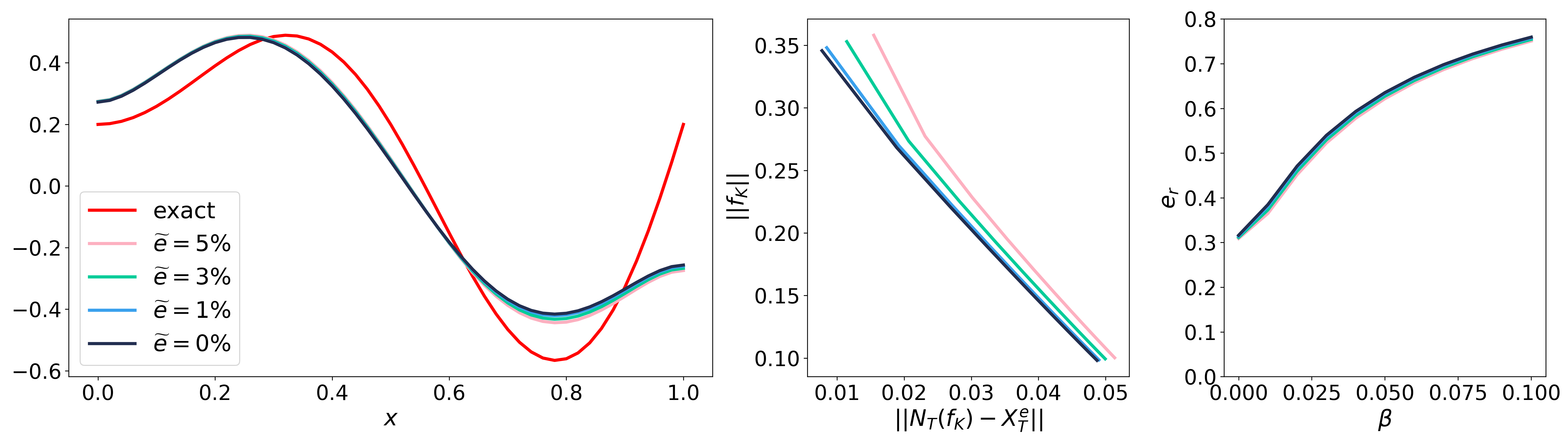}
    \caption{Reconstruction using the Sobolev gradient with the conjugate gradient algorithm for $f^1$ for $\beta=0$ (left), penalty versus data fitting (middle), relative errors in function of $\beta$ (right) for \textbf{ISP1.2}.}
    \label{fig:SCGM-f1}
\end{figure} 
\begin{table}[t]
 \centering 
\caption{Results for minimisation of the functional \eqref{eq:IT12} using the Sobolev gradient from \eqref{eq:gradientellitpicbvpIT}  and the conjugate gradient algorithm     for $f^0$ and $f^1$ for \textbf{ISP1.2}, where $e_r$ is the relative error, $P=\nrm{f_K}$ and  $DF = \nrm{N_T(f_K)-X_T^e}$ where $f_K$ is the obtained approximation ($K=2, \beta=0.0$).} 
   \footnotesize{ 
    \begin{tabular*}{\linewidth}{@{\extracolsep{\fill}}  c| c c  c| c c c }\hline 
    & $e_r$ & $P$ & $DF$ &  $e_r$ & $P$ & $DF$ \\ \hline 
    $\widetilde{e}$& \multicolumn{3}{c|}{$0\%$} & \multicolumn{3}{c}{$1\%$}  \\ 
    $f^0$&  0.3476 & 0.3478 & 1.1883e-02& 0.3444 & 0.3494 & 1.2608e-02 \\
    $f^1$& 0.3162 & 0.3456 & 7.7647e-03&0.3143 & 0.3480 & 8.4741e-03\\\hline
    $\widetilde{e}$& \multicolumn{3}{c|}{$3\%$} & \multicolumn{3}{c}{$5\%$}  \\ 
    $f^0$&  0.3381 & 0.3527 & 1.5203e-02 &0.3320 & 0.3560 & 1.8765e-02 \\
    $f^1$& 0.3113 & 0.3529 & 1.1460e-02 & 0.3090 & 0.3580 & 1.5477e-02 \\\hline 
        \end{tabular*}
   }     
    \label{tab:SCGM}
\end{table}

\section{Conclusion}\label{sec:conclusion}
In this work, three inverse problems related to the recovery of an unknown space-dependent source function in the load or heat force of a thermoelastic system of type-{III} were investigated. The numerical determination based on several iterative methods was discussed, and formulae for the (Sobolev) gradients of the cost functionals by means of appropriate adjoint problems were derived. Convergence results were shown for the Landweber approach, and the existence and uniqueness of minimisers for the (noisy) cost functionals were established. The proposed schemes were illustrated and discussed on a numerical example in \Cref{sec:numerical}. %
Based on the relative errors, we observe that for $f^0$ (see \eqref{eq:fchoices}), the Landweber method and the steepest descent with $\Leb^2$-gradient perform best for small noise levels, while the conjugate gradient method is better for larger noise levels. The Sobolev versions yield less good results, as these methods do not enforce the approximations to be zero at the boundary as the other methods impose. However, without the extra assumption on the data, the Sobolev methods can reasonably recover the shape of the target source. For the function $f^1$ (see \eqref{eq:fchoices}), the Landweber and $\Leb^2$-gradient methods are of similar quality, but less accurate compared to those for $f^0.$ One of the main reasons is the fixed values at the boundary. Based on the number of iterations, the conjugate gradient methods (both for $\Leb^2$-gradients and Sobolev gradients) need the fewest iterations compared to the Landweber and steepest descent methods. In this view, the conjugate gradient methods yield a fast recovery, and depending on the available knowledge on the boundary either the $\Leb^2$-gradient or the Sobolev version is preferred.

\begin{table}[t]
    \centering 
 \caption{Smallest obtained relative errors $e_r$ for the different used methods and different levels of noise for the functions $f^0$ and $f^1$ for \textbf{ISP1.2}. LW: Landweber method, L2-SD: steepest descent with $\Leb^2$-gradient, S-SD: steepest descent with Sobolev gradient, L2-CGM: conjugate gradient with $\Leb^2$-gradient, S-CGM: conjugate gradient method with Sobolev gradient.}
   
   \footnotesize{ 
    \begin{tabular*}{\linewidth}{@{\extracolsep{\fill}} c c c c c c  }\hline 
    $e_r$ & Method & $0\%$ &  $1\%$ & $3\%$ & $5\%$ \\\hline 
    \multirow{5}{*}{$f^0$}     & LW & 0.0241 & 0.0487 & 0.1020 & 0.1446 \\
     & L2-SD & 0.0197 & 0.0425& 0.0587 &0.0689 \\
     & S-SD & 0.1605  & 0.1718 & 0.2145 & 0.2236\\
     & L2-CGM & 0.0964 & 0.0723 & 0.0515& 0.0852\\
     & S-CGM & 0.3476 & 0.3444& 0.3381& 0.3320\\\hline 
     \multirow{5}{*}{$f^1$} & LW &0.1255 & 0.1381& 0.1661& 0.1956 \\
     & L2-SD & 0.1240 & 0.1253 & 0.1308& 0.1320 \\
     & S-SD & 0.2283 & 0.2258 & 0.2710  &0.2815  \\
     & L2-CGM & 0.1492 & 0.1388& 0.1429& 0.1747\\
     & S-CGM & 0.3162 & 0.3143 & 0.3113& 0.3090 \\\hline 
        \end{tabular*}
   }     
   
    \label{tab:summary}
\end{table}
\clearpage
\section*{Funding}
Dr.\ Karel Van Bockstal is supported by the Methusalem programme of the Ghent University Special Research Fund (BOF) grant number (01M01021).

\section*{Acknowledgements}
We would like to thank Prof.\ Daniel Lesnic from the University of Leeds for valuable discussions and pointing us in the direction of the Sobolev gradient method during the AIP2023 conference. Dr.\ Tim Raeymaekers is thanked for fruitful discussions.

 \bibliography{bibliography}
 \bibliographystyle{abbrv}
 \end{document}